\def\sqr#1#2{{\vcenter{\vbox{\hrule height.#2pt
              \hbox{\vrule width.#2pt height#1pt \kern#1pt \vrule width.#2pt}
          \hrule height.#2pt}}}}
\def\sqr#1#2{{\vcenter{\vbox{\hrule height.#2pt
              \hbox{\vrule width.#2pt height#1pt \kern#1pt \vrule width.#2pt}
              \hrule height.#2pt}}}}
\def\3n{\negthinspace \negthinspace \negthinspace }
\def\2n{\negthinspace \negthinspace }
\def\1n{\negthinspace }
\def\={\buildrel \triangle \over =}
\def\sup{\mathop{\rm sup}}
\def\inf{\hbox{\rm inf$\,$}}
\def\|{\Big |}
\def\({\Big (}
\def\){\Big )}
\def\[{\Big[}
\def\]{\Big]}
\def\be{\begin{equation}}
\def\bel{\begin{equation}\label}
\def\ee{\end{equation}}
\def\bt{\begin{theorem}}
\def\bcd{\begin{condition}}
\def\ecd{\end{condition}}
\def\et{\end{theorem}}
\def\bc{\begin{corollary}}
\def\ec{\end{corollary}}
\def\bde{\begin{definition}}
\def\ede{\end{definition}}
\def\bl{\begin{lemma}}
\def\el{\end{lemma}}
\def\bp{\begin{proposition}}
\def\ep{\end{proposition}}
\def\br{\begin{remark}}
\def\er{\end{remark}}
\def\ba{\begin{array}}
\def\ea{\end{array}}
\def\ed{\end{document}}
\def\square#1{\vbox{\hrule\hbox{\vrule height#1%
     \kern#1\vrule}\hrule}}
\def\rectangle#1#2{\vbox{\hrule\hbox{\vrule height#1%
     \kern#2\vrule}\hrule}}
\font\tenbb=msbm10 \font\sevenbb=msbm7 \font\fivebb=msbm5
\newtheorem{lemma}{Lemma}[section]
\newtheorem{remark}{Remark}[section]
\newtheorem{example}{Example}[section]
\newtheorem{theorem}{Theorem}[section]
\newtheorem{corollary}{Corollary}[section]
\newtheorem{definition}{Definition}[section]
\newtheorem{proposition}{Proposition}[section]
\newtheorem{condition}{Condition}[section]
\begin{document}

\title{Mean-field stochastic differential equations and associated PDEs}
\author{Rainer Buckdahn$^{1, 3}$,\, \,  Juan Li$^{2}$\footnote{The author has
been supported by the NSF of P.R.China (Nos. 11171187, 11222110), Shandong Province (Nos. BS2011SF010, JQ201202), 111 Project (No. B12023).},\, \,  Shige Peng$^{3}$,\, \, Catherine Rainer$^{1}$ \\
{\small $^1$Laboratoire de Math\'{e}matiques LMBA, CNRS-UMR 6205, Universit\'{e} de
Bretagne Occidentale,}\\
 {\small 6, avenue Victor-le-Gorgeu, CS 93837, 29238 Brest cedex 3, France.}\\
{\small $^2$School of Mathematics and Statistics, Shandong University (Weihai), Weihai 264209, P. R. China.;}\\
{\small $^3$School of Mathematics, Shandong University, Jinan 250100, P. R. China.}\\
{\small{\it E-mails: rainer.buckdahn@univ-brest.fr, juanli@sdu.edu.cn, peng@sdu.edu.cn, catherine.rainer@univ-brest.fr.}}
\date{June 7, 2014} }

\maketitle \noindent{\bf Abstract} In this paper we consider a mean-field stochastic differential equation, also called Mc Kean-Vlasov equation, with initial data $(t,x)\in[0,T]\times R^d,$ which coefficients depend on both the solution $X^{t,x}_s$ but also its law. By considering square integrable random variables $\xi$ as initial condition for this equation, we can easily show the flow property of the solution $X^{t,\xi}_s$ of this new equation. Associating it with a process $X^{t,x,P_\xi}_s$ which coincides with $X^{t,\xi}_s$, when one substitutes $\xi$ for $x$, but which has the advantage to depend only on the law $P_\xi$ of $\xi$, we characterise the function $V(t,x,P_\xi)=E[\Phi(X^{t,x,P_\xi}_T,P_{X^{t,\xi}_T})]$ under appropriate regularity conditions on the coefficients of the stochastic differential equation as the unique classical solution of a non local PDE of mean-field type, involving the first and second order derivatives of $V$ with respect to its space variable and the probability law. The proof bases heavily on a preliminary study of the first and second order derivatives of the solution of the mean-field stochastic differential equation with respect to the probability law and a corresponding It\^{o} formula. In our approach we use the notion of derivative with respect to a square integrable probability measure introduced in \cite{PL} and we extend it in a direct way to second order derivatives.

\bigskip

 \noindent{{\bf Mathematics Subject Classification:} primary: 60H10; secondary: 60K35.}\\
{{\bf Keywords:}\small \  Mean-field stochastic differential equation; McKean-Vlasov equation; value function; PDE of mean-field type.}

\section{\large{Introduction}}

Given a complete probability space $(\Omega,{\cal F},P)$ endowed with a Brownian motion $B=(B_t)_{t\in[0,T]}$ and its filtration $\mathbb{F}=({\cal F}_{t\in[0,T]}$ augmented by all $P$-null sets and a sufficiently rich sub-$\sigma$-algebra, we consider the mean-field stochastic differential equation (SDE), also known under the name McKean-Vlasov SDE,

\smallskip

\centerline{$\displaystyle dX_s^{t,x}=\sigma(X_s^{t,x},P_{X_s^{t,x}})dB_s+b(X_s^{t,x},P_{X_s^{t,x}})ds,\, s\in[t,T],\ X^{t,x}_t=x\in R^d.$}

\smallskip

\noindent It is well-known that under an appropriate Lipschitz assumption on the coefficients this equation possesses for all $(t,x)\in[t,T]\times R^d$ a unique solution $X_s^{t,x},\ s\in[0,T].$ For the classical SDE which coefficients $\sigma(x,\mu)=\sigma(x),\ b(x,\mu)=b(x)$ depend only on $x\in R^d$ but not on the probability measure $\mu$, it is well known that the solution $X_s^{t,x},\ 0\le t\le s\le T, x\in R^d$, defines a flow and, if the coefficients are regular enough, the unique classical solution of the partial differential equation (PDE)

\smallskip

$\displaystyle \partial_t V(t,x)+\frac12\mbox{tr}\left(\sigma\sigma^*(x)D^2_xV(t,x)\right)+b(x)D_xV(t,x)=0,\ (t,x)\in [0,T]\times R^d,$

$\displaystyle V(T,x)=\Phi(x),\ x\in R^d,$

\smallskip

\noindent is $V(t,x)=E[\Phi(X_T^{t,x})],\, (t,x)\in[0,T]\times R^d.$ But how about the above SDE which coefficients depend on $(x,\mu)\in R^d\times{\cal P}_2(R^d)$, where ${\cal P}_2(R^d)$ denotes the space of the square integrable probability measures over $R^d$? Of course,
for an SDE with coefficients depending on $(x,\mu)$ the solution $X^{t,x}_s,\ 0\le s\le t\le T,\ x\in R^d$, does obviously not define a flow. But we see easily that, if we replace the deterministic initial condition $X^{t,x}_t=x\in R^d$ by a square integrable random variable $X^{t,\xi}_t=\xi\in L^2({\cal F}_t;R^d)(:=L^2(\Omega,{\cal F}_t,P;R^d))$ and consider the SDE

\smallskip

\centerline{$\displaystyle dX_s^{t,\xi}=\sigma(X_s^{t,\xi},P_{X_s^{t,\xi}})dB_s+b(X_s^{t,\xi},P_{X_s^{t,\xi}})ds,\, s\in[t,T],\ X^{t,\xi}_t=\xi\in R^d$}

\noindent (where,obviously, in general, $X^{t,\xi}\not={X^{t,x}}_{|x=\xi}$), then we have the flow property: For all $0\le t\le s\le T,\ \xi\in L^2(\Omega,{\cal F}_t,P;R^d)$, $X_r^{s,\eta}=X_r^{t,\xi},\ r\in[s,T],$ for $\eta=X_s^{t,\xi}.$ This flow proporty should give rise to a PDE with a solution $V(t,\xi)=E[\Phi(X^{t,\xi}_T,P_{X^{t,\xi}_T})],$ but the fact that $\xi$ has to belong to $L^2({\cal F}_t;R^d)$ has the consequence that $V(t,\xi)$ is defined over a Hilbert space depending on $t$, which makes such PDE difficult to handle. As alternative we associate with the above SDE for $X^{t,\xi}$ the SDE

\smallskip

\centerline{$\displaystyle dX_s^{t,x,\xi}=\sigma(X_s^{t,x,\xi},P_{X_s^{t,\xi}})dB_s+b(X_s^{t,x,\xi},P_{X_s^{t,\xi}})ds,\, s\in[t,T],\ X^{t,x,\xi}_t=x\in R^d.$}

\smallskip

\noindent It turns out (cf. Lemma 3.1) that $X_s^{t,x,P_\xi}=X_s^{t,x,\xi},\ s\in[t,T],$ depends on $\xi\in L^2({\cal F}_t;R^d)$ only through its law $P_\xi,$ $X_s^{t,\xi}={X_s^{t,x,P_\xi}}_{|x=\xi}$, and $\left(X_s^{t,x,P_\xi},X^{t,\xi}_{s}\right),\ 0\le t\le s\le T,\ \xi\in  L^2({\cal F}_t;R^d),$ has the flow property.

The objective of our manuscript is to study under appropriate regularity assumptions on the coefficients the second order PDE which is associated with this stochastic flow, i.e., the PDE which unique classical solution is given by the function

\smallskip

\centerline{$\displaystyle V(t,x,P_\xi)=E\left[\left(\Phi(X^{t,x,P_\xi}_T,P_{X^{t,\xi}_T}\right)\right],\, (t,x)\in[0,T]\times R^d,\
\xi\in L^2({\cal F}_t;R^d).$}

\smallskip

\noindent The function $V$ is defined over $[0,T]\times R^d\times{\cal P}_2(R^d)$, and so the study of the first and second order derivatives with respect to the probability measure will play a crucial role. In our work we have based ourselves on the notion of derivative of a function $f:{\cal P}_2(R^d)\rightarrow R$ with respect to the probability measure $\mu$, which was studied by P.-L.Lions in his course at {\it Coll\`{e}ge de France} \cite{PL}. The derivative of $f$ with respect to $\mu$ is a function $\partial_\mu f:{\cal P}_2(R^d)\times R^d\rightarrow R^d$ (cf. Section 2. Preliminaries). The main result of our work says that, if the coefficients $b$ and $\sigma$ are twice differentiable in $(x,\mu)$ with bounded Lipschitz derivatives of first and second order, then the function $V(t,x,P_\xi)$ defined above is the unique classical solution of the following non local PDE of mean-field type (cf. Theorem 5.2):

\smallskip

\noindent\hskip 2mm$\displaystyle 0=\partial_tV(t,x,P_\xi)+\sum_{i=1}^d\partial_{x_i}
V(t,x,P_\xi)b_{i}(x,P_\xi)\displaystyle+\frac12\sum_{i,j,k=1}^d\partial_{x_i x_j}^2V(t,x,P_\xi)(\sigma_{i,k}
\sigma_{j,k})(x,P_\xi)$

\vskip -4mm\noindent\hskip 8mm$\displaystyle +E\big[\sum_{i=1}^d(\partial_\mu V)_i(t,x,P_\xi,\xi) b_{i}(\xi,P_\xi)+ \frac12\sum_{i,j,k=1}^d\partial_{y_i}(\partial_\mu V)_j(t,x,P_\xi,\xi)(\sigma_{i,k}\sigma_{j,k})(\xi,P_\xi)\big],$

\noindent\hskip 2mm$\displaystyle V(T,x,P_\xi)=\Phi(x,P_\xi),$

\smallskip

\noindent with $(t,x,P_\xi)$ running $[0,T]\times {R}^d\times {\cal P}_2(R^d)$. We see, in particular, that, in contrast to the classical case, the derivative $\partial_\mu V(t,x,P_\xi,y)$ and, as second order derivative, the derivative of $\partial_\mu V(t,x,P_\xi,y)$ with respect to $y$ are involved.

Mean-field SDEs, also known as McKean-Vlasov equations, were discussed the first time by Kac \cite{Kac1}, \cite{Kac2} in the frame of his study of the Boltzman equation for the particle density in diluted monatomic gases as well as in that of the stochastic toy model for the Vlasov kinetic equation of plasma. A by now classical method of solving mean-field SDEs by approximation consists in the use of
so called $N$-particle systems with weak interaction, formed by $N$ equations driven by independent Brownian motions.
The convergence of this system to the mean-field SDE is called in the literature propagation of chaos for the McKean-Vlasov equation.

The pioneering works by Kac, and in the aftermath by other authors, have attracted a lot of researchers interested in the study of the chaos propagation and the limit equations in different frame works; for an impression we refer the reader, for instance, to \cite{BRTV}, \cite{BT}, \cite{DG}, \cite{C}, \cite{K}, \cite{M}, \cite{O}, \cite{PH} ,\cite{S1} and \cite{S2} as well as the references therein. With the pioneering works on mean-field stochastic differential games by Lasry and Lions (We refer to \cite{LL} and the papers cited therein, but also to \cite{PL}), new impulses and new applications for mean-field problems were given.
So recently Buckdahn, Djehiche, Li and Peng~\cite{BLP1} studied
a special mean field problem by a purely stochastic approach and deduced a new kind of backward SDE (BSDE) which they called mean-field BSDE; they showed that the BSDE can be obtained by an approximation involving $N$-particle systems with weak interaction. They completed these studies of the approximation with associating a kind of Central Limit Theorem for the approximating systems and obtained as limit some forward-backward SDE of mean-field type, which is not only governed by a Brownian motion but also by an independent Gaussian
field. In ~\cite{BLP}, deepening the investigation of mean-field SDEs and associated mean-field BSDEs,  Buckdahn, Li and Peng generalised their previous results on mean-field BSDEs, and in a ``Markovian'' framework in which the initial data $(t,x)$ were frozen in the law variable of the coefficients, they investigated the associated non local PDE. However, our objective has been to overcome this partial freezing of initial data in the mean-field SDE and to study the associated PDE, and this is done in our present manuscript.
Our approach is highly inspired by the courses given by P.-L.Lions \cite{PL} at {\it Coll\`{e}ge de France} (redacted by P.Cardaliaguet) and by recent works of R.Carmona and F.Delarue , who, directly inspired by the works of J.M.Lasry P.-L.Lions \cite{LL} and the courses of P.-L.Lions, translated his rather analytical approach into a stochastic one; let us cite \cite{CD}, \cite{CD1} and the refences indicated therein.

Our manuscript is organised as follows: In Section 2 ``Preliminaries'' we introduce the framework of our study. A particular attention is paid to a recall of the notion of the derivative of a function defined over the space of square integrable probability measures over $R^d$. On the basis of this notion of first order derivative we introduce in the same spirit the second order derivatives of such a function. The first and the second order differentiability of a function with respect to the probability measure allows in the following to derive a second order Taylor expansion which turns out to be crucial in our approach. Section 2 finishes with the discussion of an example. In Section 3 we introduce our mean-field SDE with the standard assumptions on its coefficients (their twicefold differentiability with respect to $(x,\mu)$ with bounded Lipschitz derivatives of first and second order), and we study useful properties of the mean-field SDE. A central property studied in this section is the differentiability of the solution process $X^{t,x,P_\xi}$ with respect to the probability law $P_\xi.$ These investigations are completed by Section 3, which is devoted to the study of the second order derivatives of $X^{t,x,P_\xi}$, and so namely for that with respect to the probability law. The first and the second order derivatives of $X^{t,x,P_\xi}$ are characterised as the unique solution of associated SDEs which on their part allow to get estimates for the derivatives of order 1 and 2 of $X^{t,x,P_\xi}$. The results obtained for the process $X^{t,x,P_\xi}$ and so also for $X^{t,\xi}$ in the Sections 2 and 3 are used for the proof of the regularity of the value function $V(t,x,P_\xi)$. Finally, Section 6 is devoted to an It\^{o} formula associated with mean-field problems and it gives our main result, Theorem 5.2, stating that our value function $V$ is the unique classical solution of the PDE of mean-field type given above.

\section{ {\large Preliminaries}}

Let us begin with introducing some notations and concepts, which we
will need in our further computations. We shall in particular
introduce the notion of differentiability of a function $f$ defined
over the space ${\cal P}_2({\mathbb R}^d)$ of all square integrable
probability measures $\mu$ over $({\mathbb R}^d,{\cal B}({\mathbb R}^d))$, where ${\cal B}({\mathbb R}^d)$
denotes the Borel $\sigma$-field over ${\mathbb R}^d$; the space ${\cal P}_2({\mathbb R}^{2d})$
is endowed with the $2$-Wasserstein metric

\smallskip

\be\label{2.1} W_2(\mu,\nu):=\inf\big\{\left(\int_{{\mathbb R}^d\times
{\mathbb R}^d}|x-y|^2\rho(dxdy)\right)^{1/2},\, \rho\in{\cal P}_2({\mathbb R}^{2d})
\mbox{ with }\rho(.\times {\mathbb R}^d)=\mu,\rho({\mathbb R}^d\times.) =\nu\big\},\ee

\smallskip

\noindent $\mu,\ \nu\in{\cal P}_2({\mathbb R}^d).$ Among the different notions of
differentiability of a function $f$ defined over ${\cal P}_2({\mathbb R}^d)$ we
adopt for our approach that introduced by Lions in his lectures at
{\it Coll\`{e}ge de France} in Paris and revised in the notes by Cardaliaguet \cite{PL}; we refer the reader also, for instance, to Carmona
and Delarue \cite{CD}. Let us consider a probability space $(\Omega,{\cal F},P)$
which is ``rich enough'' (The precise space we will work with will be
introduced later). ``Rich enough'' means that for every
$\mu\in {\cal P}_2({\mathbb R}^d)$ there is a random variable $\vartheta\in L^2({\cal
F};{\mathbb R}^d)(:=L^2(\Omega,{\cal F},P;{\mathbb R}^d))$ such that $P_\vartheta=\mu.$\ It is well-known that the probability space $([0,1],{\cal
B}([0,1]),dx)$ has this property.

Identifying the random variables in $L^2({\cal F};{\mathbb R}^d)$, which
coincide $P$-a.e., we can regard $L^2({\cal F};{\mathbb R}^d)$ as a Hilbert
space with inner product $(\xi,\eta)_{L^2}=E[\xi\cdot\eta],\ \xi,\ \eta\in
L^2({\cal F}; {\mathbb R}^d)$, and norm $|\xi|_{L^2}=(\xi,\xi)_{L^2}^{\frac{1}{2}}$. Recall that, due to the definition made by Lions \cite{PL}
(see Cardaliaguet \cite{PC}), a function
$f:{\cal P}_2({\mathbb R}^d)\rightarrow
{\mathbb R}$ is said to be differentiable in $\mu\in {\cal P}_2({\mathbb R}^d)$ if, for
$\widetilde{f}(\vartheta):=f(P_\vartheta),\ \vartheta\in L^2({\cal
F})$, there is some $\vartheta_0\in L^2({\cal F})$ with
$P_{\vartheta_0}=\mu$, such that the function $\widetilde{f}: L^2({\cal F};{\mathbb R}^d)\rightarrow {\mathbb R}$ is differentiable (in Fr\'{e}chet
sense) in $\vartheta_0$, i.e., there exists a linear continuous mapping
$D\widetilde{f}(\vartheta_0): L^2({\cal F};{\mathbb R}^d)\rightarrow {\mathbb R}$
($D\widetilde{f}(\vartheta_0)\in L(L^2({\cal F};{\mathbb R}^d);{\mathbb R})$) such that
\be\label{2.2}\widetilde{f}(\vartheta_0+\eta)-\widetilde{f}(\vartheta_0)=D\widetilde{f}(\vartheta_0)(\eta)+
o(|\eta|_{L^2}), \ee
with $|\eta|_{L^2}\rightarrow 0$\ for $\eta\in L^2({\cal
F};{\mathbb R}^d)$. Since
$D\widetilde{f}(\vartheta_0)\in L(L^2({\cal F};{\mathbb R}^d);{\mathbb R})$, Riesz'
Representation Theorem yields the existence of a ($P$-a.s.) unique random
variable $\theta_0\in L^2({\cal F};{\mathbb R}^d)$ such that
$D\widetilde{f}(\vartheta_0)(\eta)=(\theta_0,\eta)_{L^2}=E\left[\theta_0
\eta\right],$ for all
$\eta\in L^2({\cal F};{\mathbb R}^d)$. In \cite{PL} it has
been proved that there is a Borel function $h_0:{\mathbb R}^d\rightarrow {\mathbb R}^d$
such that $\theta_0=h_0(\vartheta_0),\ P$-a.e. Taking into account
the definition of $\widetilde{f}$, this allows to write
\be\label{2.3}f(P_\vartheta)-f(P_{\vartheta_0})=E[h_0(\vartheta_0)\cdot
(\vartheta-\vartheta_0)]+o(|\vartheta-\vartheta_0|_{L^2}),\ee
$\vartheta\in L^2({\cal F},{\mathbb R}^d)$.

We call $\partial_\mu f(P_{\vartheta_0},y):=h_0(y),\ y\in {\mathbb R}^d,$ the
derivative of $f: {\cal P}_2({\mathbb R}^d)\rightarrow {\mathbb R}$ at $P_{\vartheta_0}.$
Note that $\partial_\mu f(P_{\vartheta_0},y)$ is only
$P_{\vartheta_0}(dy)$-a.e. uniquely determined.

However, in our approach we have to consider functions $f:{\cal
P}_2({\mathbb R}^d)\rightarrow {\mathbb R}$ which are differentiable in all elements of
${\cal P}_2({\mathbb R}^d)$. In order to simplify the argument, we suppose
that $\widetilde{f}: L^2({\cal F},{\mathbb R}^d)\rightarrow {\mathbb R}$ is Fr\'{e}chet differential
over the whole space $L^2({\cal F},{\mathbb R}^d)$. This corresponds to a large
class of important examples. In this case we have the
derivative $\partial_\mu f(P_{\vartheta},y),$ defined
$P_{\vartheta}(dy)$-a.e., for all $\vartheta\in L^2({\cal F},{\mathbb R}^d).$\ In Lemma 3.2 \cite{CD} it is shown that, if, furthermore, the Fr\'{e}chet derivative $D\widetilde{f}: L^2({\cal F},{\mathbb R}^d)\rightarrow L(L^2({\cal F},{\mathbb R}^d), {\mathbb R})$\ is Lipschitz continuous (with a Lipschitz constant $K \in {\mathbb R}_+$), then there is for all $\vartheta\in L^2({\cal F},{\mathbb R}^d)$\ a $P_\vartheta$-version of $\partial_\mu f(P_\vartheta, .): {\mathbb R}^d\rightarrow {\mathbb R}^d$\ such that
$$|\partial_\mu f(P_\vartheta, y)-\partial_\mu f(P_\vartheta, y')|\leq K|y-y'|,\ \mbox{for all}\ y, y'\in {\mathbb R}^d.$$

\noindent This motivates us to make the following definition.

\begin{definition} We say that $f\in C^{1,1}_b
({\cal P}_2({\mathbb R}^d))$ (continuously differentiable over ${\cal P}_2({\mathbb R}^d)$
with Lipschitz-continuous bounded derivative), if there exists for all $\vartheta\in
L^2({\cal F},{\mathbb R}^d)$  a $P_\vartheta$-modification of $\partial_\mu
f(P_\vartheta,.),$ again denoted by $\partial_\mu f(P_\vartheta,.)$,
such that $\partial_\mu f:{\cal P}_2({\mathbb R}^d)\times {\mathbb R}^d\rightarrow {\mathbb R}^d$
is bounded and Lipschitz continuous, i.e., there is some real
constant $C$ such that

${\rm i)}\hskip 0.4cm |\partial_\mu f(\mu,x)|\le C,\, \mu\in{\cal P}_2({\mathbb R}^d),
\ x\in {\mathbb R}^d,$

${\rm ii)}\hskip 0.3cm |\partial_\mu f(\mu,x)-\partial_\mu f(\mu',x')|
\le C(W_2(\mu,\mu')+|x-x'|),\ \mu,\ \mu'\in{\cal P}_2({\mathbb R}^d),\ x,\ x'\in {\mathbb R}^d;$

\smallskip

\noindent we consider this function $\partial_\mu f$ as the derivative
of $f$.\end{definition}

\br Let us point out that, if $f\in C_b^{1,1}({\cal P}_2({\mathbb R}^d))$, the version of $\partial_\mu f(P_{\vartheta},.)$, $\vartheta\in L^2({\cal F},{\mathbb R}^d)$, indicated in Definition 2.1 is unique. Indeed, given $\vartheta\in L^2({\cal F};{\mathbb R}^d)$, let $\theta$\ be a d-dimensional vector of independent standard normally distributed random variables, which are independent of $\vartheta$. Then, since $\partial_\mu f(P_{\vartheta+\varepsilon \theta}, \vartheta+\varepsilon \theta)$\ is P-a.s. defined, $\partial_\mu f(P_{\vartheta+\varepsilon \theta}, y)$ is defined $dy$-a.e. From the Lipschitz continuity ii) of $\partial_\mu f$ in Definition 2.1 it then follows that $\partial_\mu f(P_{\vartheta+\varepsilon \theta}, y)$ is defined for all $y\in {\mathbb R}^d$, and taking the limit $0<\varepsilon\downarrow 0$\ yields that $\partial_\mu f(P_{\vartheta}, y)$\ is uniquely defined for all $y\in {\mathbb R}^d$.
\er

Given now $f\in C^{1,1}_b({\cal P}_2({\mathbb R}^d))$, for fixed $y\in {\mathbb R}^d$ the
question of the differentiability of its components $(\partial_\mu
f)_j(.,y):{\cal P}_2({\mathbb R}^d)\rightarrow {\mathbb R},\ 1\le j\le d,$ raises,
and it can be discussed in the same way as the first order
derivative $\partial_\mu f$ above. If $(\partial_\mu f)_j(.,y):{\cal
P}_2({\mathbb R}^d)\rightarrow {\mathbb R}$ belongs to $C^{1,1}_b({\cal P}_2({\mathbb R}^d))$, we
have that its derivative $\partial_\mu((\partial_\mu
f)_j(.,y))(.,.):{\cal P}_2({\mathbb R}^d)\times {\mathbb R}^d\rightarrow {\mathbb R}^d$ is a
Lipschitz-continuous function, for every $y\in {\mathbb R}^d$. Then
\be\label{2.4}\partial_\mu^2f(\mu,x,y):=\left(\partial_\mu((\partial_\mu
f)_j(.,y))(\mu,x)\right)_{1\le j\le d},\quad (\mu,x,y)\in{\cal
P}_2({\mathbb R}^d)\times {\mathbb R}^d\times {\mathbb R}^d,\ee

\noindent defines a function
$\partial_\mu^2f:{\cal P}_2({\mathbb R}^d)\times {\mathbb R}^d\times {\mathbb R}^d\rightarrow
{\mathbb R}^d\otimes {\mathbb R}^d$.

\begin{definition}  We say that $f\in C^{2,1}_b({\cal
P}_2({\mathbb R}^d))$, if $f\in C^{1,1}_b({\cal P}_2({\mathbb R}^d))$ and\\
{\rm i)} $(\partial_\mu f)_j(.,y)\in  C^{1,1}_b({\cal P}_2({\mathbb R}^d)),$ for all
$y\in {\mathbb R}^d,\, 1\le j\le d$, and

\noindent\ \ \ $\partial_\mu^2f:{\cal P}_2({\mathbb R}^d)\times {\mathbb R}^d\times {\mathbb R}^d\rightarrow
{\mathbb R}^d\otimes {\mathbb R}^d$ is bounded and Lipschitz-continuous;\\
{\rm ii)} $\partial_\mu f(\mu, .): {\mathbb R}^d\rightarrow {\mathbb R}^d$ is differentiable, for
every $\mu\in{\cal P}_2({\mathbb R}^d)$, and its derivative
$\partial_{y}\partial_\mu f:{\cal P}_2({\mathbb R}^d)$

\noindent\ \ \ $\times {\mathbb R}^d\rightarrow
{\mathbb R}^d\otimes {\mathbb R}^d$ is bounded and Lipschitz-continuous.\end{definition}

Adopting the above introduced notations, we consider a function
$f\in C^{2,1}_b({\cal P}_2({\mathbb R}^d))$ and discuss its
second order Taylor expansion. For this end we have still to introduce
some notations. Let $(\widetilde{\Omega},\widetilde{\cal F},\widetilde{P})$
be a copy of the probability space $(\Omega,{\cal F},P)$. For any random
variable (of arbitrary dimension) $\vartheta$ over $(\Omega,{\cal F},P)$
we denote by $\widetilde{\vartheta}$ a copy (of the same law as $\vartheta$,
but defined over $(\widetilde{\Omega},\widetilde{\cal F},\widetilde{P}):$
$\widetilde{P}_{\widetilde{\vartheta}}=P_\vartheta$. The expectation
$\widetilde{E}[.]=\int_{\widetilde{\Omega}}(.)d\widetilde{P}$ acts only
over the variables endowed with a tilde. This can be made rigorous by working
with the product space
$(\Omega,{\cal F},P)\otimes(\widetilde{\Omega},\widetilde{\cal F},
\widetilde{P})=(\Omega,{\cal F},P)\otimes(\Omega,{\cal F},P)$ and putting
$\widetilde{\vartheta}(\widetilde{\omega},\omega):=\vartheta(\widetilde{\omega}),\,
(\widetilde{\omega},\omega)\in\widetilde{\Omega}\times\Omega=\Omega\times
\Omega$, for $\vartheta$ random variable defined over $(\Omega,{\cal F},P)$).
Of course, this formalism can be easily extended from random variables to stochastic processes.

\smallskip

With the above notation and writing $a\otimes b:=(a_{i}b_j)_{1\le i,j\le d},$\
for $a=(a_{i})_{1\le i\le d},\ b=(b_j)_{1\le j\le d}\in {\mathbb R}^d$, we can state now
the following result.

\bl Let $f\in C^{2,1}_b({\cal P}_2({\mathbb R}^d))$. Then, for any given $\vartheta_0\in L^2({\cal F};{\mathbb R}^d)$ we have
the following second order expansion:
\be\label{2.5}\begin{array}{lll}
&f(P_{\vartheta})-f(P_{\vartheta_0})=E\left[\partial_\mu
f(P_{\vartheta_0},\vartheta_0)\cdot\eta\right]+\frac{1}{2}
E\left[\widetilde{E}\left[tr(\partial_\mu^2f(P_{\vartheta_0},
\widetilde{\vartheta_0},\vartheta_0)\cdot\widetilde{\eta}\otimes
\eta)\right]\right]\\
&\hskip 3.2cm+\frac{1}{2}E\left[\partial_y
\partial_\mu f(P_{\vartheta_0},\vartheta_0)\cdot\eta\otimes\eta\right] +R(P_{\vartheta},P_{\vartheta_0}),\ \vartheta\in L^2({\cal F},{\mathbb R}^d),\end{array}\ee

\noindent where $\eta:=\vartheta-\vartheta_0$, and for all $\vartheta\in L^2({\cal F};{\mathbb R}^d)$ the
remainder $R(P_{\vartheta},P_{\vartheta_0})$ satisfies the estimate
\be\label{2.6} |R(P_{\vartheta},P_{\vartheta_0})|\le
CE\left[|\vartheta-\vartheta_0|^{3}\wedge |\vartheta-\vartheta_0|^{2}\right].\ee

\noindent The constant $C\in {\mathbb R}_+$ only depends on the H\"{o}lder
norm of $\partial_\mu^2f$ and $\partial_y\partial_\mu f$.\el
 We observe that the above second order expansion doesn't
constitute a second order Taylor expansion for the associated
function $\widetilde{f}:L^2({\cal F}; {\mathbb R}^d)\rightarrow {\mathbb R}$, since the
reminder is of order $E\left[|\vartheta-\vartheta_0|^{3}\wedge |\vartheta-\vartheta_0|^{2}\right]$
and not of order $o(|\vartheta-\vartheta_0|^2_{L^2})$. Indeed, as the following example shows, in general we only have $E\left[|\vartheta-\vartheta_0|^{3}\wedge |\vartheta-\vartheta_0|^{2}\right]=O(|\vartheta-\vartheta_0|^2_{L^2})$.

\begin{example} Let $\vartheta_\ell=I_{A_\ell}$, with $A_\ell\in {\cal F}$\ such that $P(A_\ell)\rightarrow 0$, as $\ell\rightarrow \infty$. Then $\vartheta_\ell\rightarrow 0$\ in $L^2\ (\ell\rightarrow \infty)$, and $E\left[|\vartheta_\ell|^{3}\wedge |\vartheta_\ell|^{2}\right]=P(A_\ell)=E[\vartheta_\ell^2]=|\vartheta_\ell|^2_{L^2}\rightarrow 0$\ ($\ell\rightarrow \infty$).
\end{example}

However, for our purposes the above expansion is fine.
\begin{proof} (of Lemma 2.1) Let $\vartheta_0\in L^2({\cal F};{\mathbb R}^d)$.
Then, for all $\vartheta\in L^2({\cal F};{\mathbb R}^d)$, putting
$\eta:=\vartheta-\vartheta_0$ and using the fact that $f\in
C^{2,1}_b({\cal P}_2({\mathbb R}^d))$ , we have
\be\label{2.7}
f(P_{\vartheta})-f(P_{\vartheta_0})=\int_0^1\frac{d}{d\lambda}
f(P_{\vartheta_0+\lambda\eta})d\lambda=\int_0^1 E\left[
\partial_\mu f(P_{\vartheta_0+\lambda\eta},\vartheta_0+
\lambda\eta)\cdot \eta\right]d\lambda.\ee
 Let us now compute $\displaystyle\frac{d}{d\lambda}\partial_\mu
f(P_{\vartheta_0+\lambda\eta}, \vartheta_0+ \lambda\eta)$. Since
$f\in C^{2,1}_b({\cal P}_2({\mathbb R}^d))$, the lifted function $\widetilde{\partial_\mu f}(\xi, y)={\partial_\mu f}(P_\xi, y)$, $\xi\in L^2({\cal F};{\mathbb R}^d)$, is Fr\'{e}chet differentiable in $\xi$, and
\be\label{2.8}\frac{d}{d\lambda}\partial_\mu
f(P_{\vartheta_0+\lambda\eta},y)=\frac{d}{d\lambda}\widetilde{\partial_\mu f}(\vartheta_0+\lambda\eta, y) =E\left[\partial_\mu^2
f(P_{\vartheta_0+\lambda\eta},\vartheta_0+\lambda\eta,y)\cdot
\eta\right],\  \lambda\in {\mathbb R},\ y\in {\mathbb R}^d.\ee

\noindent Then, choosing an independent copy
$(\widetilde{\vartheta},\widetilde{\vartheta_0})$ of
$(\vartheta,\vartheta_0)$ defined over $(\widetilde{\Omega},
\widetilde{\cal F},\widetilde{P})$
(i.e., in particular, $\widetilde{P}_{(\widetilde{\vartheta},
\widetilde{\vartheta_0})}=P_{(\vartheta,\vartheta_0)}$),
we have for $\widetilde{\eta}:=\widetilde{\vartheta}-\widetilde{\vartheta_0},$
\be\label{2.9}\frac{d}{d\lambda}\partial_\mu f(P_{\vartheta_0+\lambda\eta},y)=\widetilde{E}
\left[\partial_\mu^2f(P_{\vartheta_0+\lambda\eta},\widetilde{\vartheta_0}+
\lambda\widetilde{\eta},y)\cdot\widetilde{\eta}\right],\ \lambda\in{\mathbb R},\ y\in {\mathbb R}^d.\ee

\noindent Consequently,
\be\label{2.10}\frac{d}{d\lambda}\partial_\mu
f(P_{\vartheta_0+\lambda\eta},\vartheta_0+
\lambda\eta)=\widetilde{E}
\left[\partial_\mu^2f(P_{\vartheta_0+\lambda\eta},
\widetilde{\vartheta_0}+\lambda\widetilde{\eta},\vartheta_0+
\lambda\eta)\cdot\widetilde{\eta}\right]+\partial_y\partial_\mu f
(P_{\vartheta_0+\lambda\eta},\vartheta_0+\lambda\eta)
\cdot\eta.\ee

\noindent Thus,
\be\label{2.11}\begin{array}{lll}
f(P_{\vartheta})-f(P_{\vartheta_0})&=&\displaystyle \int_0^1 E\left[\partial_\mu f(P_{\vartheta_0+\lambda\eta},
\vartheta_0+ \lambda\eta)\cdot\eta\right]d\lambda\\
 &=\displaystyle &E\left[
\partial_\mu f(P_{\vartheta_0},\vartheta_0)\cdot
\eta\right]+\displaystyle \int_0^1\int_0^\lambda E\left[
\frac{d}{d\rho}\partial_\mu f(P_{\vartheta_0+\rho\eta},
\vartheta_0+\rho\eta)\cdot \eta\right]d\rho d\lambda\\
&=\displaystyle &E\left[
\partial_\mu f(P_{\vartheta_0},\vartheta_0)\cdot \eta\right]\\
& &+\displaystyle \int_0^1\int_0^\lambda
E\left[\widetilde{E}\left[tr(\partial_\mu^2f(P_{\vartheta_0+
\rho\eta},\widetilde{\vartheta_0}+\rho\widetilde{\eta},\vartheta_0
+\rho\eta)\cdot\widetilde{\eta}\otimes
\eta)\right]\right] d\rho d\lambda\\
& &+\displaystyle \int_0^1\int_0^\lambda
E\left[tr(\partial_y\partial_\mu f(P_{\vartheta_0+\rho\eta},
\vartheta_0+\rho\eta)\cdot\eta\otimes\eta)\right]d\rho d\lambda.\end{array}\ee

\noindent From this latter relation we get
\be\label{2.11}\begin{array}{lll} f(P_{\vartheta})-f(P_{\vartheta_0})&=&E\left[\partial_\mu
f(P_{\vartheta_0},\vartheta_0)\cdot\eta\right]+\frac{1}{2}
E\left[\widetilde{E}\left[tr(\partial_\mu^2f(P_{\vartheta_0},
\widetilde{\vartheta_0},\vartheta_0)\cdot\widetilde{\eta}\otimes
\eta)\right]\right]\\
& &+\frac{1}{2}E\left[tr(\partial_y
\partial_\mu f(P_{\vartheta_0},\vartheta_0)\cdot\eta\otimes\eta)\right]+R_1(P_{\vartheta},P_{\vartheta_0})+
R_2(P_{\vartheta}, P_{\vartheta_0}),\end{array}\ee

\noindent with the remainders
\be\label{2.13}
R_1(P_{\vartheta},P_{\vartheta_0})=\int_0^1\int_0^\lambda
E\left[\widetilde{E}\left[tr\left(\left(\partial_\mu^2f(P_{\vartheta_0+\rho\eta},
\widetilde{\vartheta_0}+\rho\widetilde{\eta},\vartheta_0+\rho\eta)-
\partial_\mu^2f(P_{\vartheta_0},\widetilde{\vartheta_0},\vartheta_0)
\right)\cdot\widetilde{\eta}\otimes\eta\right)\right]\right] d\rho
d\lambda\ee

\noindent and
\be\label{2.14}\displaystyle R_2(P_{\vartheta},P_{\vartheta_0})=
\int_0^1\int_0^\lambda E\left[tr\left(\left(\partial_y\partial_\mu
f(P_{\vartheta_0+\rho\eta},
\vartheta_0+\rho\eta)-\partial_y\partial_\mu f(P_{\vartheta_0},
\vartheta_0)\right) \cdot\eta\otimes\eta\right)\right]d\rho d\lambda.\ee

Finally, from the boundedness and the Lipschitz continuity of the
functions $\partial_\mu^2f$ and $\partial_y\partial_\mu f$ we conclude
that, for some $C\in {\mathbb R}_+$ only depending on $\partial_\mu^2f,\ \partial_y\partial_\mu f$,
$\displaystyle |R_1(P_{\vartheta},P_{\vartheta_0})|\le
C\left(E\left[|\eta|^2\wedge 1\right]\right)^{\frac{3}{2}},$ while $\displaystyle
|R_2(P_{\vartheta},P_{\vartheta_0})|\le CE\left[|\eta|^{2}(|\eta|\wedge 1)\right]=CE\left[|\eta|^{3}\wedge |\eta|^2\right].$
This proves the statement.\end{proof}

Let us finish our preliminary discussion with an illustrating
example.
\begin{example} Given two twice continuously differentiable functions $h: {\mathbb R}^d\rightarrow {\mathbb R}$ and $g: {\mathbb R}\rightarrow
{\mathbb R}$\ with bounded derivatives, we consider $f(P_\vartheta):=g\left(E[h(\vartheta)]\right),$
$\vartheta\in L^2({\cal F};{\mathbb R}^d)$. Then, given any $\vartheta_0\in
L^2({\cal F};{\mathbb R}^d)$, $\widetilde{f}(\vartheta):=f(P_\vartheta)=
g\left(E[h(\vartheta)] \right)$ is Fr\'{e}chet differentiable in $\vartheta_0$, and

\smallskip

$$\begin{array}{rcl}\displaystyle \widetilde{f}(\vartheta_0+\eta)-\widetilde{f}(\vartheta_0)&=\int_0^1g'(E[h(\vartheta_0+
s\eta)])E[h'(\vartheta_0+s\eta)\eta]ds\\
&=g'(E[h(\vartheta_0)])E[h'(\vartheta_0)\eta]+o(|\eta|_{L^2})\\
&=E[g'(E[h(\vartheta_0)])h'(\vartheta_0)\eta]+o(|\eta|_{L^2}).\\
\end{array}$$
Thus, $D\widetilde{f}(\vartheta_0)(\eta)=E[g'(E[h(\vartheta_0)])h'(\vartheta_0)\eta]$, $\eta\in L^2({\cal F};{\mathbb R}^d)$, i.e.,
$$\displaystyle \partial_\mu
f(P_{\vartheta_0},y)=g'(E[h(\vartheta_0)])(\partial_yh)(y), y\in {\mathbb R}.$$

\smallskip

\noindent With the same argument we see that

\smallskip

$\displaystyle\partial_\mu^2f(P_{\vartheta_0},x,y)=
g''(E[h(\vartheta_0)])(\partial_xh)(x)\otimes(\partial_yh)
(y)$ and $\displaystyle\partial_y\partial_\mu f(P_{\vartheta_0},y)=g'(E[h(\vartheta_0)])
(\partial_y^2h)(y)$.

\smallskip

\noindent Consequently, if $g$ and $h$ are three times continuously
differentiable with bounded derivatives of all order, then
the second order expansion stated in the above Lemma 2.1 takes for this example
the special form

\smallskip

$g(E[h(\vartheta)])-g(E[h(\vartheta_0)])$

$=g'(E[h(\vartheta_0)])E\left[\partial_y h(\vartheta_0)
\cdot(\vartheta-\vartheta_0)\right]+\frac{1}{2}g''(E[h(\vartheta_0)])
\left(E\left[\partial_y h(\vartheta_0)\cdot(\vartheta-\vartheta_0)
\right]\right)^2$

$\quad+\frac{1}{2}g'(E[h(\vartheta_0)])E\left[tr\left(
\partial^2_y h(\vartheta_0)\cdot(\vartheta-\vartheta_0)\otimes
(\vartheta-\vartheta_0)\right)\right]+O\left(E[|\vartheta-
\vartheta_0|^3\wedge 1]\right).$
\end{example}
\section{The mean-field stochastic differential equation}

Let us now consider a complete probability space $(\Omega,{\cal
F},P)$ on which is defined a $d$-dimensional Brownian motion
$B(=(B^1,\dots,B^d))=(B_t)_{t\in [0,T]}$, and $T>0$ denotes an
arbitrarily fixed time
horizon. We suppose that there is a sub-$\sigma$-field ${\cal
F}_0\subset {\cal F}$ such that

i) the Brownian motion $B$ is independent of ${\cal F}_0$, and

ii) ${\cal F}_0$ is ``rich enough'', i.e., ${\cal P}_2({\mathbb R}^d)=
\{P_\vartheta,\, \vartheta\in L^2({\cal F}_0;{\mathbb R}^d)\}.$

\noindent By $\mathbb{F}=({\cal F}_t)_{t\in[0,T]}$ we denote the
filtration generated by $B$, completed and augmented by ${\cal
F}_0$.

\smallskip

Given deterministic Lipschitz functions $\sigma:{\mathbb R}^d\times
{\cal P}_2({\mathbb R}^d)\rightarrow {\mathbb R}^{d\times d}$ and $b:{\mathbb R}^d\times
{\cal P}_2({\mathbb R}^d)\rightarrow {\mathbb R}^{d}$, we consider for the
initial data $(t,x)\in[0,T]\times {\mathbb R}^d$ and $\xi\in L^2({\cal
F}_t;{\mathbb R}^d)$ the stochastic differential equations (SDEs)
\be\label{3.1} X^{t,\xi}_s=\xi+\int_t^s\sigma(X^{t,\xi}_r,P_{X^{t,\xi}_r})dB_r+
\int_t^sb(X^{t,\xi}_r,P_{X^{t,\xi}_r})dr,\ s\in[t,T],\ee and
\be\label{3.2} X^{t,x,\xi}_s=x+\int_t^s\sigma(X^{t,x,\xi}_r,P_{X^{t,\xi}_r})dB_r+
\int_t^sb(X^{t,x,\xi}_r,P_{X^{t,\xi}_r})dr,\ s\in[t,T].\ee We observe
that under our Lipschitz assumption on the coefficients the both
SDEs have a unique solution in ${\cal S}^2([t,T];{\mathbb R}^d)$, the space of
$\mathbb{F}$-adapted continuous processes $Y=(Y_s)_{s\in[t,T]}$ with
$E\left[\sup_{s\in[t,T]}|Y_s|^2\right]<+\infty$ (see, for example,
 Carmona and Delarue \cite{CD}). We see, in particular, that the solution
$X^{t,\xi}$ of the first equation allows to determine that of the
second equation. As SDE standard estimates show, we have for some
$C\in {\mathbb R}_+$ depending only on the Lipschitz constants of $\sigma$
and $b$,
\be\label{3.3} E[\sup_{s\in[t,T]}|X_s^{t,x,\xi}
-X_s^{t,x',\xi}|^2]\le C|x-x'|^2,\ee
\noindent for all $t\in[0,T],\ x,\ x'\in {\mathbb R}^d,\ \xi\in L^2(
{\cal F}_t;{\mathbb R}^d).$ This allows to substitute in the second SDE
for $x$ the random variable $\xi$ and shows that $X^{t,x,\xi}
\big|_{x=\xi}$ solves the same SDE as $X^{t,\xi}$. From the
uniqueness of the solution we conclude
\be\label{3.4}X^{t,x,\xi}_s\big|_{x=\xi}=X^{t,\xi}_s,\quad s\in[t,T].\ee
Moreover, from the uniqueness of the solution of the both
equations we deduce the following {\it flow property}
\be\label{3.5}\left(X^{s,X^{t,x,\xi}_s,X^{t,\xi}_s}_r,X^{s,X^{t,\xi}_s}_r\right)
=\left(X^{t,x,\xi}_r,X^{t,\xi}_r\right),\ r\in[s,T],
\mbox{ for all } 0\le t\le s\le T,\, x\in {\mathbb R}^d,\ \xi\in L^2({\cal
F}_t;{\mathbb R}^d).\ee
In fact, putting $\eta=X^{t,\xi}_s\in L^2({\cal
F}_s;{\mathbb R}^d)$, and considering the SDEs (\ref{3.1}) and (\ref{3.2}) with the initial data $(s, y)$\ and $(s, \eta)$, respectively,
$$\label{3.1-1} X^{s,\eta}_r=\eta+\int_s^r\sigma(X^{s,\eta}_u,P_{X^{s,\eta}_u})dB_u+
\int_s^rb(X^{s,\eta}_u,P_{X^{s,\eta}_u})du,\ r\in[s,T],$$ and
$$\label{3.2-2} X^{s,y,\eta}_r=y+\int_s^r\sigma(X^{s,y,\eta}_u,P_{X^{s,\eta}_u})dB_u+
\int_s^rb(X^{s,y,\eta}_u,P_{X^{s, \eta}_u})du,\ r\in[s,T],$$
we get from the uniqueness of the solution that $X^{s,\eta}_r=X^{t,\xi}_r$, $r\in [s, T]$, and, consequently, $X^{s,X^{t,x,\xi}_s,\eta}_r=X^{t,x,\xi}_r,$\
$r\in [t, T]$, i.e., we have (\ref{3.5}).

Having this flow property, it is natural to define for a sufficiently
regular function $\Phi:{\mathbb R}^d\times{\cal P}_2({\mathbb R}^d)\rightarrow {\mathbb R}$
an associated value function
\be\label{3.6}V(t,x,\xi):=E\left[\Phi(X^{t,x,\xi}_T,P_{X^{t,\xi}_T})\right],\
(t,x)\in [0,T]\times {\mathbb R}^d,\ \xi\in L^2({\cal F}_t;{\mathbb R}^d),\ee and to ask
which partial differential equation is satisfied by this function
$V$. In order to be able to answer to this question in the frame of
the concept we have introduced above, we have to show that the
function $V(t,x,\xi)$ does not depend on $\xi$ itself but only on its
law $P_\xi$, i.e., that we have to do with a function $V:[0,T]\times
{\mathbb R}^d\times {\cal P}_2({\mathbb R}^d)\rightarrow {\mathbb R}$. For this the following lemma is useful.
\bl For all $p\ge 2$ there is
constant $C_p\in {\mathbb R}_+$ only depending on
the Lipschitz constants of $\sigma$ and $b$, such that we have the
following estimate
\be \label{3.7}E[\sup_{s\in[t,T]}|X^{t,x_1,\xi_1}_s-X^{t,x_2,\xi_2}_s|^p]\le C_p\left(|x_1-x_2|^p+W_2(P_{\xi_1},P_{\xi_2})^p\right),\ee
for all $t\in[0,T],\ x_1,\ x_2\in {\mathbb R}^d,\ \xi_1,\ \xi_2\in L^2({\cal
F}_t;{\mathbb R}^d).$
\el
\begin{proof} Recall that for the 2-Wasserstein
metric $W_2(.,.)$ we have
\be \label{3.8}\begin{array}{lll}
 W_2(P_\vartheta,P_\theta)& = \inf\left\{(E[|\vartheta'-\theta'|^2])^{1/2},
\mbox{ for all }\vartheta',\theta'\in L^2({\cal F}_0;{\mathbb R}^d) \mbox{
with }P_{\vartheta'}=P_\vartheta,\ P_{\theta'}=P_\theta\right\}\\
& \le  (E[|\vartheta-\theta|^2])^{1/2},\ \ \mbox{for all }\vartheta,\ \theta\in L^2({\cal F};{\mathbb R}^d),\end{array}\ee
because we have chosen ${\cal F}_0$\ ``rich enough". Since our coefficients $\sigma$ and $b$ are Lipschitz over
${\mathbb R}^d\times{\cal P}_2({\mathbb R}^d)$, this allows to get with the help of standard
estimates for the SDEs for $X^{t,\xi}$ and $X^{t,x,\xi}$ that,
for some constant $C\in {\mathbb R}_+$ only depending on the
Lipschitz constants of $\sigma$ and $b$,
\be \label{3.9}
E[\sup_{s\in[t,T]}|X^{t,\xi_1}_s-X^{t,\xi_2}_s|^2]\le
CE\left[|\xi_1-\xi_2|^2\right],\ \xi_1,\ \xi_2\in L^2({\cal
F}_t;{\mathbb R}^d),\ t\in[0,T],\ee

\noindent and, for some $C_p\in {\mathbb R}$ depending only on $p$ and the Lipschitz
constants of the coefficients,
\be \label{3.10}
E[\sup_{s\in[t,v]}|X^{t,x_1,\xi_1}_s-X^{t,x_2,\xi_2}_s|^p]\le
C_p (|x_1-x_2|^p+\int_t^vW_2(P_{X^{t,\xi_1}_r},P_{X^{t,\xi_2}_r})^pdr),\ee

\noindent for all $x_1,\ x_2\in {\mathbb R}^d,\ \xi_1,\ \xi_2\in L^2({\cal
F}_t;{\mathbb R}^d)$, $0\le t\le v\le T.$\ On the other hand, from the SDE for
$X^{t,x,\xi}$ we derive easily that
$X^{t,\xi',\xi}(:=X^{t,x,\xi}\big|_{x=\xi'})$ obeys the same law as
$X^{t,\xi}(=X^{t,x,\xi}\big|_{x=\xi})$, whenever $\xi,\ \xi'\in L^2({\cal F}_t;
{\mathbb R}^d)$ have the same law. This allows to deduce from the latter estimate, for $p=2$,
\be \label{3.11}\begin{array}{lll}
&\displaystyle\sup_{s\in[t,v]}W_2(P_{X^{t,\xi_1}_s},P_{X^{t,\xi_2}_s})^2
\le\displaystyle \sup_{s\in[t,v]}E[|X^{t,\xi'_1,\xi_1}_s-X^{t,\xi'_2,\xi_2}_s|^2]\leq E[\sup_{s\in[t,v]}|X^{t,\xi'_1,\xi_1}_s-X^{t,\xi'_2,\xi_2}_s
|^2]\\
&\le\displaystyle C(E[|\xi'_1-\xi'_2|^2]+\int_t^vW_2
(P_{X^{t,\xi_1}_r},P_{X^{t,\xi_2}_r})^2dr),\ v\in[t,T],\end{array}\ee

\noindent for all $\xi'_1,\ \xi'_2\in L^2({\cal F}_t;{\mathbb R}^d)$\ with
$P_{\xi'_1}=P_{\xi_1}$ and  $P_{\xi'_2}=P_{\xi_2}$. Hence, taking
at the right-hand side of (\ref{3.11}) the infimum over all such $\xi'_1,\ \xi'_2
\in L^2({\cal F}_t;{\mathbb R}^d)$ and considering the above characterization of
the 2-Wasserstein metric, we get
\be \label{3.12}
\sup_{s\in[t,v]}W_2(P_{X^{t,\xi_1}_s},P_{X^{t,\xi_2}_s})^2
\le C(W_2(P_{\xi'_1},P_{\xi'_2})^2+\int_t^vW_2(P_{X^{t,
\xi_1}_r},P_{X^{t,\xi_2}_r})^2dr),\ v\in[t,T].\ee

\noindent Then Gronwall's inequality implies
\be \label{3.13}
\sup_{s\in[t,T]}W_2(P_{X^{t,\xi_1}_s},P_{X^{t,\xi_2}_s})^2
\le CW_2(P_{\xi_1},P_{\xi_2})^2,\ t\in[0,T],\ \xi_1,\ \xi_2\in
L^2({\cal F}_t;{\mathbb R}^d),\ee

\noindent which allows to deduce from the estimate (\ref{3.10})

\be \label{3.14}
E[\sup_{s\in[t,T]}|X^{t,x_1,\xi_1}_s-X^{t,x_2,
\xi_2}_s|^p]\le C_p(|x_1-x_2|^p+W_2(P_{\xi_1},P_{\xi_2})^p),\ee

\noindent for all $t\in[0,T],\ x_1,\ x_2\in {\mathbb R}^d,\ \xi_1,\ \xi_2\in
L^2({\cal F}_t;{\mathbb R}^d).$ The proof is complete now.\end{proof}
\begin{remark} An immediate consequence of the above
Lemma 3.1 is that, given $(t,x)\in[0,T]\times {\mathbb R}^d$, the processes
$X^{t,x,\xi_1}$ and $X^{t,x,\xi_2}$ are indistinguishable, whenever the laws
of $\xi_1\in  L^2({\cal F}_t;{\mathbb R}^d)$ and $\xi_2\in  L^2({\cal F}_t;{\mathbb R}^d)$
are the same. But this means that we can define
\be\label{3.15} X^{t,x,P_\xi}:=X^{t,x,\xi},\ (t,x)\in
[0,T]\times {\mathbb R}^d,\ \xi\in L^2({\cal F}_t;{\mathbb R}^d),\ee

\noindent and, extending the notation introduced in the preceding
section for functions to random variables and processes, we
should consider $\displaystyle \widetilde{X}^{t,x,\xi}_s=X^{t,x,P_\xi}_s=
X^{t,x,\xi}_s,\ s\in[t,T],\ (t,x)\in[0,T]\times {\mathbb R}^d,\ \xi\in
L^2({\cal F}_t;{\mathbb R}^d).$ However, we will
prefer to write $X^{t,x,\xi}$ and reserve the notation
$\widetilde{X}^{t,x,P_\xi}$ for an independent copy of
$X^{t,x,P_\xi}$, which we will introduce later.\end{remark}

Having now by the above relation the process $X^{t,x,\mu}$ defined
for all $\mu\in{\cal P}_2({\mathbb R}^d)$, the question of its
differentiability with respect to $\mu$ raises; it will be studied
through the Fr\'{e}chet differentiability of the mapping $L^2({\cal
F}_t;{\mathbb R}^d)\ni\xi\rightarrow X^{t,x,\xi}_s\in L^2({\cal
F}_s;{\mathbb R}^d),\ s\in[t,T]$. For this we suppose that

\medskip

\noindent\underline{\textbf{Hypothesis (H.1)}} The couple of coefficients $(\sigma, b)$ belongs to $C^{1,1}_b({\mathbb R}^d\times {\cal P}_2({\mathbb R}^d)
\rightarrow {\mathbb R}^{d\times d}\times {\mathbb R}^d)$, i.e., the components
$\sigma_{i,j},\ b_j$, $1\le i, j\le d$, have the following properties:

i) $\sigma_{i,j}(x,.),\ b_j(x,.)$ belong to $C^{1,1}_b({\cal
P}_2({\mathbb R}^d))$, for all $x\in {\mathbb R}^d$;

ii)$\sigma_{i,j}(,\mu),\ b_j(.,\mu)$ belong to $C^1_b({\mathbb R}^d)$, for all
$\mu\in{\cal P}_2({\mathbb R}^d)$;

iii) The derivatives
$\partial_x\sigma_{i,j},\ \partial_xb_j: {\mathbb R}^d\times{\cal
P}_2({\mathbb R}^d)\rightarrow {\mathbb R}^d$, and
$\partial_\mu\sigma_{i,j},\ \partial_\mu b_j: {\mathbb R}^d\times{\cal
P}_2({\mathbb R}^d)\times {\mathbb R}^d\rightarrow {\mathbb R}^d$, are bounded and Lipschitz continuous.
\begin{theorem} Let $(\sigma,\ b)\in
C^{1,1}_b({\mathbb R}^d\times {\cal P}_2({\mathbb R}^d)\rightarrow {\mathbb R}^{d\times d}\times
{\mathbb R}^d)$\ satisfy assumption (H.1). Then, for all $0\le t\le s\le T$ and $x\in {\mathbb R}^d$,
the mapping $L^2({\cal F}_t;{\mathbb R}^d)\ni\xi\mapsto X^{t,x,\xi}_s=
X^{t,x,P_\xi}_s\in L^2({\cal F}_s;{\mathbb R}^d)$ is Fr\'{e}chet differentiable,
with Fr\'{e}chet derivative
\be\label{3.16}\begin{array}{lll}
&DX_s^{t,x,\xi}(\eta)=
\widetilde{E}[U^{t,x,P_\xi}_s(\widetilde{\xi})\cdot
\widetilde{\eta}]\\
& =(\widetilde{E}[\sum_{j=1}^d U^{t,x,P_\xi}_{s,i,j}
(\widetilde{\xi})\cdot\widetilde{\eta}_j])_{1\le i\le d},\ \mbox{for all}\ \eta=(\eta_1,\dots,\eta_d)\in L^2({\cal F}_t;{\mathbb R}^d),
\end{array}\ee
\noindent where, for all $y\in {\mathbb R}^d$, $U^{t,x,P_\xi}(y)=\big((U^{t,x,
P_\xi}_{s,i,j}(y))_{s\in[t,T]}\big)_{1\le i,j\le d}\in S^2_{\mathbb{F}}(t,T;
{\mathbb R}^{d\times d})$ is the unique solution of the SDE
\noindent\be\label{3.17}\begin{array}{lll} &\!\!\!\!\!\!U^{t,x,P_\xi}_{s,i,j}(y)\\
&\!\!\!\!\!\!=\displaystyle\sum_{k,\ell=1}^d\int_t^s
\partial_{x_k}\sigma_{i,\ell}(X^{t,x,P_\xi}_r,P_{X^{t,\xi}_r})\cdot
U^{t,x,P_\xi}_{r,k,j}(y)dB^\ell_r \displaystyle+\sum_{k=1}^d\int_t^s
\partial_{x_k}b_{i}(X^{t,x,P_\xi}_r,P_{X^{t,\xi}_r})\cdot
U^{t,x,P_\xi}_{r,k,j}(y)dr\\
&\!\!\!\!\!\!+\displaystyle\sum_{k,\ell=1}^d\int_t^s\!\!\! E\!\!\left[
(\partial_\mu\sigma_{i,\ell})_k(z,P_{X^{t,\xi}_r},
X_r^{t,y,P_\xi})\cdot\partial_{x_j}
X^{t,y,P_\xi}_{r,k}\!\!+(\partial_\mu\sigma_{i,\ell})_k(z,P_{X^{t,\xi}_r},
X_r^{t,\xi})\cdot U^{t,\xi}_{r,k,j}(y)\!\right]_{\big|z=X^{t,x,P_\xi}_r}dB^\ell_r\ \  \\
&\!\!\!\!\!\!+\displaystyle\sum_{k=1}^d\int_t^sE\left[(\partial_\mu
b_{i})_k(z,P_{X^{t,\xi}_r},X_r^{t,y,P_\xi})\cdot \partial_{x_j}X^{t,y,P_\xi}_{r,k}+(\partial_\mu
b_{i})_k(z,P_{X^{t,\xi}_r},X_r^{t,\xi})\cdot U^{t,\xi}_{r,k,j}(y)\right]_{\big|z=X^{t,x,P_\xi}_r}dr,\ \end{array}\ee

\noindent $s\in[t,T],\ 1\le i,j\le d,$ and $U^{t,\xi}(y)=\big((U^{t,\xi}_{s,i,j}
(y))_{s\in[t,T]}\big)_{1\le i,j\le d}\in S^2_{\mathbb{F}}(t,T; {\mathbb R}^{d\times d})$ is that of the SDE
\be\label{3.18}\displaystyle\begin{array}{lll} &\!\!\!\!U^{t,\xi}_{s,i,j}(y)\\
&\!\!\!\!=\displaystyle\sum_{k,\ell=1}^d
\int_t^s\partial_{x_k}\sigma_{i,\ell}(X^{t,\xi}_r,P_{X^{t,\xi}_r})\cdot
U^{t,\xi}_{r,k,j}(y)dB^\ell_r+\displaystyle\sum_{k=1}^d\int_t^s
\partial_{x_k}b_{i}(X^{t,\xi}_r,P_{X^{t,\xi}_r})\cdot
U^{t,\xi}_{r,k,j}(y)dr\\
&\!\!\!\!\displaystyle +\sum_{k,\ell=1}^d\int_t^s\!\!\! E\left[(\partial_\mu
\sigma_{i,\ell})_k(z,P_{X^{t,\xi}_r},X_r^{t,y,P_\xi})\cdot
\partial_{x_j}X^{t,y,P_\xi}_{r,k}+(\partial_\mu
\sigma_{i,\ell})_k(z,P_{X^{t,\xi}_r},X_r^{t,\xi})\cdot U^{t,\xi}_{r,k,j}(y)\right]_{\big|z=X^{t,\xi}_r}dB^\ell_r\\
&\!\!\!\!\displaystyle +\sum_{k=1}^d\int_t^sE\left[(\partial_\mu
b_{i})_k(z,P_{X^{t,\xi}_r},X_r^{t,y,P_\xi})\cdot \partial_{x_j}X^{t,y,P_\xi}_{r,k}+(\partial_\mu
b_{i})_k(z,P_{X^{t,\xi}_r},X_r^{t,\xi})\cdot U^{t,\xi}_{r,k,j}(y)\right]_{\big|z=X^{t,\xi}_r}dr,\\ \end{array}\ee

\noindent $s\in[t,T],\ 1\le i,\ j\le d.$\end{theorem}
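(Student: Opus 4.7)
The plan is to verify existence and moment bounds for the candidate derivatives, and then to check Fréchet differentiability directly, proceeding in three stages.

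\emph{Stage 1: well-posedness of (\ref{3.17}) and (\ref{3.18}).} Under (H.1), the classical $x$-derivative $\partial_{y_j} X^{t,y,P_\xi}_{r,k}$ of the SDE (\ref{3.2}) (for $\xi$ fixed, so that the law argument $P_{X^{t,\xi}_r}$ is frozen) exists, lies in every $\mathcal{S}^p$, and is bounded uniformly in $y$; it satisfies the standard linearised SDE in the initial condition, obtained by formal differentiation in $y$. Plugging this into (\ref{3.18}) turns the latter into a linear SDE in $U^{t,\xi}(y)$ with bounded coefficients and an $L^p$-bounded forcing, so a Picard--Banach contraction on $\mathcal{S}^2_{\mathbb{F}}(t,T;{\mathbb R}^{d\times d})$ yields existence, uniqueness, and the uniform-in-$y$ bound $E[\sup_{s\in[t,T]}|U^{t,\xi}_s(y)|^p]\le C_p$. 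Once $U^{t,\xi}$ is in hand, (\ref{3.17}) becomes a linear SDE in $U^{t,x,P_\xi}(y)$ with bounded coefficients, handled by the same method.

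\emph{Stage 2: identification as Fréchet derivative.} For $\xi,\eta\in L^2({\cal F}_t;{\mathbb R}^d)$, set
$$\Delta^\eta_s:=X^{t,x,\xi+\eta}_s-X^{t,x,\xi}_s-\widetilde{E}[U^{t,x,P_\xi}_s(\widetilde{\xi})\cdot\widetilde{\eta}],\quad \delta^\eta_s:=X^{t,\xi+\eta}_s-X^{t,\xi}_s-\widetilde{E}[U^{t,\xi}_s(\widetilde{\xi})\cdot\widetilde{\eta}].$$
The goal is $E[\sup_{s\in[t,T]}(|\Delta^\eta_s|^2+|\delta^\eta_s|^2)] = o(|\eta|^2_{L^2})$ as $|\eta|_{L^2}\to 0$. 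To derive the SDE satisfied by $(\Delta^\eta,\delta^\eta)$, I expand each drift/diffusion difference in two steps: the fundamental theorem of calculus in the $x$-argument of $\sigma,b$ produces a linear term in the first-argument increment plus a remainder of order $|\eta|^2_{L^2}$; the first-order expansion deduced from Lemma 2.1 in the measure argument (the $C^{1,1}$ part suffices) produces a term of the form $\widetilde{E}[\partial_\mu\sigma(\,\cdot\,,P_{X^{t,\xi}_r},\widetilde X^{t,\xi}_r)\cdot(\widetilde X^{t,\xi+\eta}_r-\widetilde X^{t,\xi}_r)]$ plus an $O(|\eta|^2_{L^2})$ remainder controlled via the Lipschitz property of $\partial_\mu\sigma$ and Lemma 3.1. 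The crucial identification is $\widetilde X^{t,\xi}_r=\widetilde X^{t,\widetilde\xi,P_\xi}_r$, which by the chain rule further decomposes $\widetilde X^{t,\xi+\eta}_r-\widetilde X^{t,\xi}_r$ into (i) the classical $x$-derivative $\partial_y\widetilde X^{t,y,P_\xi}_r$ at $y=\widetilde\xi$ applied to $\widetilde\eta$, plus (ii) the linearisation of $\widetilde X^{t,y,P_\xi}_r$ with respect to the law $P_\xi$, i.e.\ the $\widetilde U^{t,\xi}$-correction, plus (iii) the copy $\widetilde\delta^\eta_r$. Matching all resulting terms against (\ref{3.17})--(\ref{3.18}) produces a coupled linear system for $(\Delta^\eta,\delta^\eta)$ with bounded coefficients and inhomogeneous parts of $L^2$-size $O(|\eta|^2_{L^2})$.

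\emph{Stage 3: Gronwall closure.} Standard $L^2$-estimates combined with BDG on the system of Stage 2 give an inequality of the form $E[\sup_{u\in[t,s]}(|\Delta^\eta_u|^2+|\delta^\eta_u|^2)]\le C\int_t^s E[\sup_{u\in[t,r]}(|\Delta^\eta_u|^2+|\delta^\eta_u|^2)]\,dr+C|\eta|^4_{L^2}$, and Gronwall's lemma yields $O(|\eta|^4_{L^2})=o(|\eta|^2_{L^2})$, which is exactly Fréchet differentiability with the claimed derivative. \emph{The main obstacle} is the bookkeeping in Stage 2: one must decompose $\widetilde X^{t,\xi+\eta}_r-\widetilde X^{t,\xi}_r$ cleanly into an initial-condition contribution (generating $\partial_y X^{t,y,P_\xi}$) and a law-of-$\xi$ contribution (generating $U^{t,\xi}$), and simultaneously control cross-remainders arising when $\partial_\mu\sigma$ and $\partial_\mu b$ are evaluated at perturbed measures $P_{X^{t,\xi+\lambda\eta}_r}$ rather than $P_{X^{t,\xi}_r}$. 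This is precisely where the Lipschitz continuity in (H.1)-iii) is used, together with the a priori $L^2$-control of $X^{t,\xi+\eta}-X^{t,\xi}$ and $P_{X^{t,\xi+\eta}}$--$P_{X^{t,\xi}}$ provided by Lemma 3.1.
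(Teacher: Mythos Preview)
Your definition of $\delta^\eta$ is wrong, and this breaks Stage~2. You set
\[
\delta^\eta_s=X^{t,\xi+\eta}_s-X^{t,\xi}_s-\widetilde{E}\big[U^{t,\xi}_s(\widetilde{\xi})\cdot\widetilde{\eta}\big],
\]
but $\widetilde{E}[U^{t,\xi}_s(\widetilde{\xi})\widetilde{\eta}]$ is \emph{not} the full linearisation of $\xi\mapsto X^{t,\xi}_s$: since $X^{t,\xi}_s=X^{t,x,P_\xi}_s\big|_{x=\xi}$, the random variable $\xi$ enters both as the initial point and through the law, and the correct first-order expansion is
\[
X^{t,\xi+\eta}_s-X^{t,\xi}_s=\partial_x X^{t,\xi,P_\xi}_s\cdot\eta+\widehat{E}\big[U^{t,\xi}_s(\widehat{\xi})\cdot\widehat{\eta}\big]+o(|\eta|_{L^2})
\]
(cf.\ the paper's $Z^{t,\xi}(\eta)$ in (\ref{3.30}) and (\ref{3.49})). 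With your $\delta^\eta$ the missing term $\partial_x X^{t,\xi,P_\xi}_s\cdot\eta$ is of exact order $|\eta|_{L^2}$, so the stated goal $E[\sup_s|\delta^\eta_s|^2]=o(|\eta|^2_{L^2})$ is simply false. Correspondingly, your decomposition (i)+(ii)+(iii) of $\widetilde X^{t,\xi+\eta}_r-\widetilde X^{t,\xi}_r$ does not add up: if (iii) is your $\widetilde\delta^\eta_r$ and (i) is the $\partial_y\widetilde X$-term, then (ii) cannot be the bare $\widetilde U^{t,\xi}$-correction. The fix is straightforward---redefine
\[
\delta^\eta_s:=X^{t,\xi+\eta}_s-X^{t,\xi}_s-\partial_x X^{t,\xi,P_\xi}_s\cdot\eta-\widetilde{E}\big[U^{t,\xi}_s(\widetilde{\xi})\cdot\widetilde{\eta}\big],
\]
after which the coupled system for $(\Delta^\eta,\delta^\eta)$ does close as you claim---but as written the argument fails.

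Modulo this correction, your route is genuinely different from the paper's. The paper first establishes the \emph{G\^ateaux} derivative and its representation $\partial_\xi X^{t,x,\xi}_s(\eta)=\widehat E[U^{t,x,P_\xi}_s(\widehat\xi)\widehat\eta]$ via the auxiliary processes $Y^{t,\xi}(\eta),Z^{t,\xi}(\eta),Y^{t,x,\xi}(\eta)$, then proves Lipschitz bounds on $(x,y,P_\xi)\mapsto U^{t,x,P_\xi}(y)$ (Lemma~3.2), and finally upgrades G\^ateaux to Fr\'echet by writing $X^{t,x,\xi+\eta}_s-X^{t,x,\xi}_s=\int_0^1\partial_\xi X^{t,x,\xi+r\eta}_s(\eta)\,dr$ and invoking those Lipschitz bounds. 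Your direct Taylor-plus-Gronwall verification skips the G\^ateaux stage and does not need the full regularity of $U$ in $(x,y,P_\xi)$; the paper's route is longer but yields Lemma~3.2 as a by-product, which is used later for the second-order analysis.
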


\begin{remark} Following the definition of the derivative
of a function $f: {\cal P}_2({\mathbb R}^d)\rightarrow {\mathbb R}$ explained Section 2,
we can consider $U^{t,x,P_\xi}_s(y)=(U^{t,x,P_\xi}_{s,i,j}(y))_{1\le i,j\le d}$
as derivative over ${\cal P}_2({\mathbb R}^d)$ of $X^{t,x,P_\xi}_s
=(X^{t,x,P_\xi}_{s,i})_{1\le i\le d}$ at $P_\xi.$ As notation for this
derivative we use that already introduced for functions:
\be\label{3.19}\partial_\mu X^{t,x,P_\xi}_s(y)=(\partial_\mu X^{t,x,P_\xi}_{s,i,j}
(y))_{1\le i,j\le d}:=U^{t,x,P_\xi}_s(y)=(U^{t,x,P_\xi}_{s,i,j}(y))_{1\le
i,j\le d},\ s\in [t,T].\ee
\end{remark}
Let us prove in a first step Theorem 3.1, but with the G\^{a}teaux differentiability instead of that in Fr\'{e}chet's sense, i.e., we prove that
$$L^2({\cal F}_t;{\mathbb R}^d)\ni \xi\mapsto X^{t,x,\xi}_s= X^{t,x,P_\xi}_s\in L^2({\cal F}_s;{\mathbb R}^d)$$
is G\^{a}teaux differentiable, with $\partial_\xi X^{t,x,\xi}_s(\eta)=L^2-\lim_{h\rightarrow 0}\frac{1}{h}(X^{t,x,\xi+h\eta}_s-X^{t,x,\xi}_s)=\widetilde{E}[U^{t,x,P_\xi}_s(\widetilde{\xi})\widetilde{\eta}]$, $s\in [t, T]$, $\xi, \eta\in L^2({\cal F}_t;{\mathbb R}^d).$
\begin{proof} (G\^{a}teaux differentiability) Let us make the proof for
simplicity for dimension $d=1$; using the same argument, the proof can be easily
extended to the case $d\ge 1$. Note that under our assumptions the functions
$\widetilde{\sigma}(x,\vartheta): =\sigma(x,P_\vartheta),$
$\widetilde{b}(x,\vartheta): =b(x,P_\vartheta),$
$(x,\vartheta)\in {\mathbb R}\times L^2({\cal F})$ ($L^2({\cal F})$ stands for $L^2
({\cal F};{\mathbb R})$) are continuously Fr\'{e}chet differentiable over ${\mathbb R}\times L^2({\cal F})$,
and
\be\label{3.20}\displaystyle
D\widetilde{\sigma}(x,\vartheta)(\eta)=E\left[\partial_\mu\sigma(x,
P_\vartheta,\vartheta)\cdot\eta\right],\ \ D\widetilde{b}(x,\vartheta)(\eta)=E\left[\partial_\mu b(x,
P_\vartheta,\vartheta)\cdot\eta\right],\ee

\noindent for all $(x,\vartheta)\in {\mathbb R}\times L^2({\cal F}),$ $\eta\in
L^2({\cal F}).$\ On the other hand, by the definition of
$\widetilde{\sigma}$ and $\widetilde{b}$,
\be\label{3.21}\displaystyle\partial_x\widetilde{\sigma}(x,\vartheta)=
\partial_x\sigma(x, P_\vartheta),\ \ \partial_x\widetilde{b}(x,\vartheta)=\partial_xb(x, P_\vartheta),\ee

\noindent for all $(x,\vartheta)\in {\mathbb R}\times L^2({\cal F}).$

\medskip

\noindent\underline{Step 1}: The G\^{a}teaux derivative of $X^{t,x,\xi}_s\
(=X^{t,x,P_\xi}_s)$ with respect to $\xi$.

\medskip

The objective of this first step is to characterize the G\^{a}teaux
derivative of $X^{t,x,\xi}_s:=X^{t,x,P_\xi}_s$ in $\xi\in
L^2({\cal F}_t)$. For this end let $(t,x)\in[0,T]\times {\mathbb R}^d$ and $\xi,\ \eta\in L^2({\cal F}_t)$. Since the coefficients $\sigma(.,P_\vartheta)$
and $b(.,P_\vartheta)$ are continuously differentiable with derivatives
which are bounded, uniformly with respect to $\vartheta$, it is well
known that $\partial_xX^{t,x,P_\xi}=(\partial_xX^{t,x,P_\xi}_s)_{s
\in[t,T]}$ is $L^2$-differentiable in $x$, and its $L^2$-derivative
\be\label{3.22}\displaystyle
\partial_xX^{t,x,P_\xi}_s=\lim_{h\rightarrow
0}\frac{1}{h}\left(X^{t,x+h,P_\xi}_s-X^{t,x,P_\xi}_s\right),\
s\in[t,T],\ee

\noindent (in $L^2$, uniformly in $s$) is the unique solution of the SDE
\be\label{3.23}\displaystyle \partial_xX^{t,x,P_\xi}_s=1+\int_t^s(\partial_x
\sigma)(X^{t,x,P_\xi}_r,P_{X^{t,\xi}_r})\partial_xX^{t,x,P_\xi}_r
dB_r+\int_t^s(\partial_x b)(X^{t,x,P_\xi}_r,P_{X^{t,\xi}_r})
\partial_xX^{t,x,P_\xi}_r dr,\, s\in[t,T].\ee

\noindent From this linear SDE with bounded derivatives $\partial_x
\sigma$\ and $\partial_x b$ we deduce
by substituting $x=\xi$ that, for
$\partial_xX^{t, \xi,P_\xi}_s:=\partial_xX^{t,x,P_\xi}_s\big|_{x=\xi},
\ s\in[t,T],$ it holds
\be\label{3.24} E[\sup_{s\in[t,T]}|\partial_x X^{t,
\xi,P_\xi}_s|^2\ |\ {\cal F}_t]\le \sup_{x\in {\mathbb R}} E[\sup_{s\in[t,T]}|\partial_x X^{t,
x,P_\xi}_s|^2]\le C,\ \mbox{P-a.s.},\ee

\smallskip

\noindent for some real constant $C$ only depending on the bounds of
$\partial_x\sigma$ and $\partial_x b$.

We remark that, since $\partial_xX_s^{t,x,P_\xi},\ x\in {\mathbb R},$\ is
independent of ${\cal F}_t$, the process $\partial_xX_s^{t,\xi,P_\xi}
:=\partial_xX^{t,x,P_\xi}_{s}\big|_{x=\xi}$ is $P$-a.s. well defined.
Moreover, $\partial_xX^{t,\xi,P_\xi}=(\partial_xX_s^{t,\xi,P_\xi})_{s
\in[t,T]}\in{\cal S}^2_{\mathbb{F}}(t,T)$ is the unique solution of the SDE
\be\label{3.25} \displaystyle \partial_xX^{t,\xi,P_\xi}_s=1+\int_t^s(\partial_x
\sigma)(X^{t,\xi}_r,P_{X^{t,\xi}_r})\partial_xX^{t,\xi,P_\xi}_r
dB_r+\int_t^s(\partial_x b)(X^{t,\xi}_r,P_{X^{t,\xi}_r})
\partial_xX^{t,\xi,P_\xi}_r dr,\ s\in[t,T].\ee

\smallskip

\noindent On the other hand, our assumptions on the
coefficients $\sigma$ and $b$ allow to show that the following
SDE has a unique solution $Y^{t,\xi}(\eta)=(Y^{t,\xi}_s(\eta))_{s
\in[t,T]}\in{\cal S}^2_{\mathbb{F}}(t,T)\ (:={\cal S}^2_{\mathbb{F}}(t,T;{\mathbb R}))$:
\be\label{3.26}\begin{array}{lll}
Y^{t,\xi}_s(\eta)&=&\displaystyle\int_t^s\left(\partial_x\sigma(X^{t,\xi}_r,P_{X^{t,\xi}_r})
Y^{t,\xi}_r(\eta)+D\widetilde{\sigma}(X^{t,\xi}_r,X^{t,\xi}_r)(\partial_xX^{t,
\xi,P_\xi}_r\cdot\eta+Y^{t,\xi}_r(\eta))\right)dB_r\\
& &\displaystyle+\int_t^s\left(\partial_xb(X^{t,\xi}_r,
P_{X^{t,\xi}_r})Y^{t,\xi}_r(\eta)+
D\widetilde{b}(X^{t,\xi}_r,X^{t,\xi}_r)(\partial_xX^{t,
\xi,P_\xi}_r\cdot\eta+Y^{t,\xi}_r(\eta))\right)dr,\end{array}\ee

\noindent $s\in[t,T].$\ Here, we notice that
\be\label{3.27}\begin{array}{lll}
&\!\!\!\! D\widetilde{\sigma}(X^{t,\xi}_r,X^{t,\xi}_r)(\partial_xX^{t,
\xi,P_\xi}_r\cdot\eta+Y^{t,\xi}_r(\eta))=E[
\partial_\mu\sigma(z,P_{X^{t,\xi}_r},X^{t,\xi}_r)\cdot (\partial_x X^{t,
\xi,P_\xi}_r\cdot\eta+Y^{t,\xi}_r(\eta))]_{\big|z=X^{t,\xi}_r},\\
&\!\!\!\! D\widetilde{b}(X^{t,\xi}_r,X^{t,\xi}_r)(\partial_xX^{t,
\xi,P_\xi}_r\cdot\eta+Y^{t,\xi}_r(\eta))=E[
\partial_\mu b(z,P_{X^{t,\xi}_r},X^{t,\xi}_r)\cdot (\partial_x X^{t,
\xi,P_\xi}_r\cdot\eta+Y^{t,\xi}_r(\eta))]_{\big|z=X^{t,\xi}_r},\\ \end{array}\ee

\noindent where, due to our assumptions, the coefficients $\partial_\mu\sigma$\ and $\partial_\mu b$ are bounded. Consequently,
\be\label{3.28}\begin{array}{lll}
& &E [|D\widetilde{\sigma}(X^{t,\xi}_r,X^{t,\xi}_r)(\partial_xX^{t,
\xi,P_\xi}_r\cdot\eta+Y^{t,\xi}_r(\eta))|^2 ]+E [|D\widetilde{b}
(X^{t,\xi}_r,X^{t,\xi}_r)(\partial_xX^{t,\xi,P_\xi}_r\cdot\eta+Y^{t,
\xi}_r(\eta))|^2 ]\\
& & \le CE [E [|\partial_x X^{t,\xi,P_\xi}_r|^2\ |\
{\cal F}_t ]|\eta|^2 ]+CE [|Y_r^{t,\xi}(\eta)|^2 ]\\
& &  \le C E [|\eta|^2 ]+CE [|Y_r^{t,\xi}(\eta)|^2 ].\end{array}\ee

\noindent Using this above estimate for SDE (\ref{3.26}) for $Y^{t,\xi}$, we
obtain that
\be\label{3.29}E [\sup_{s\in[t,T]}
|Y^{t,\xi}_s(\eta)|^2 ]\le CE[|\eta|^2],\ \eta\in L^2({\cal
F}_t).\ee

\noindent Moreover, from the uniqueness of the solution of SDE (\ref{3.26}) for
$Y^{t,\xi}$ we see that the mapping $Y^{t,\xi}_s(.): L^2({\cal
F}_t)\rightarrow L^2({\cal F}_s)$ is linear. Consequently, $Y^{t,\xi}_s(.): L^2({\cal F}_t)\rightarrow L^2({\cal F}_s)$\ is a linear and continuous
mapping.

Putting now $Z^{t,\xi}_s(\eta):=\partial_x X^{t,\xi,P_\xi}_s\cdot\eta
+Y^{t,\xi}_s(\eta),\ s\in[t,T]$, we see from equation (\ref{3.25}) for $\partial_x X^{t,\xi,P_\xi}$ and (\ref{3.26}) for
$Y^{t,\xi}$, that $Z^{t,\xi}\in {\cal S}^2_{\mathbb{F}}(t,T)$
solves the SDE
\be\label{3.30}\begin{array}{lll}
Z^{t,\xi}_s(\eta)&=&\displaystyle\eta+\int_t^s\left(\partial_x\sigma(X^{t,\xi}_r,
P_{X^{t,\xi}_r})Z^{t,\xi}_r(\eta)+D\widetilde{\sigma}(X^{t,\xi}_r,X^{t,
\xi}_r)(Z^{t,\xi}_r(\eta))\right)dB_r\\
& &+\displaystyle\int_t^s\left(\partial_xb(X^{t,\xi}_r,
P_{X^{t,\xi}_r})Z^{t,\xi}_r(\eta)+
D\widetilde{b}(X^{t,\xi}_r,X^{t,\xi}_r)(Z^{t,\xi}_r(\eta))\right)dr,\end{array}\ee

\noindent $s\in[t,T].$ But this is just the equation which is satisfied by the
directional derivative $\partial_\xi X^{t,\xi}_s(\eta)$ of $L^2({\cal F}_t)\ni\xi\rightarrow X^{t,\xi}_s\in
L^2({\cal F}_s),\ s\in[t,T],$ in the direction $\eta\in L^2({\cal F}_t).$ On
the other hand, from our estimate for $Y^{t,\xi}$ we also have that the
mapping $Z^{t,\xi}_s(.): L^2({\cal F}_t)\rightarrow L^2({\cal F}_s)$ is linear and continuous, i.e., $L^2({\cal F}_t)\ni\xi\rightarrow X^{t,\xi}_s\in
L^2({\cal F}_s)$ is G\^{a}teaux differentiable, and, for all $\eta\in
L^2({\cal F}_t)$,
\be\label{3.31} Z^{t,\xi}_s(\eta)=\partial_\xi X^{t,\xi}_s(\eta):=L^2\mbox{-}
\lim_{h\rightarrow 0}\frac{1}{h}\left(X^{t,\xi+h\eta}_s-X^{t,\xi}_s\right),\
s\in[t,T].\ee

\noindent For $x\in {\mathbb R}$, let us now consider the unique solution
$Y^{t,x,\xi}(\eta)=(Y^{t,x,\xi}_s(\eta))_{s\in[t,T]}\in{\cal
S}^2_{\mathbb{F}}(t,T)$ of the SDE
\be\label{3.32}\begin{array}{lll}
Y^{t,x,\xi}_s(\eta)&=&\displaystyle\int_t^s\left(\partial_x\sigma(X^{t,
x,P_\xi}_r,P_{X^{t,\xi}_r})Y^{t,x,\xi}_r(\eta)+
D\widetilde{\sigma}(X^{t,x,P_\xi}_r,X^{t,\xi}_r)(Z^{t,\xi}_r(\eta))
\right)dB_r \\
& &\displaystyle+\int_t^s\left(\partial_xb(X^{t,
x,P_\xi}_r,P_{X^{t,\xi}_r})Y^{t,x,\xi}_r(\eta)+
D\widetilde{b}(X^{t,x,P_\xi}_r,X^{t,\xi}_r)(Z^{t,\xi}_r(\eta))\right)dr.\end{array}\ee

\noindent $s\in[t,T].$ Taking into account the characterization of
$Z^{t,\xi}(\eta)$ obtained above and making standard estimates, we
see that also $L^2({\cal F}_t)\ni\xi\rightarrow
X^{t,x,\xi}_s(=X^{t,x,P_\xi}_s)\in L^2({\cal F}_s)$
is G\^{a}teaux differentiable, and the G\^{a}teau derivative
is just $Y^{t,x,\xi}_s(.)$:
\be\label{3.33}Y^{t,x,\xi}_s(\eta)=\partial_\xi X^{t,x,P_\xi}_s(\eta):
=L^2\mbox{-}\lim_{h\rightarrow
0}\frac{1}{h}\left(X^{t,x,\xi+h\eta}_s-X^{t,x,\xi}_s\right),\
s\in[t,T],\ \eta\in L^2({\cal F}_t).\ee

\noindent We remark that, since the following coefficients

\noindent$(\partial_x\sigma)(X_r^{t,x,P_\xi},P_{X^{t,\xi}_r}),\
(\partial_x b)(X_r^{t,x,P_\xi},P_{X^{t,\xi}_r}),$
$(D\tilde{\sigma}(X_r^{t,x,P_\xi},X^{t,\xi}_r)(Z^{t,\xi}_r(\eta))$, $(D\tilde{b}(X_r^{t,x,P_\xi},X^{t,\xi}_r)(Z^{t,\xi}_r(\eta))$

\noindent are independent of ${\cal F}_t$ (Recall
that $(D\tilde{\sigma}(z,X^{t,\xi}_r)(Z^{t,\xi}_r(
\eta))=E[\partial_\mu\sigma(z,P_{X^{t,\xi}_r},X^{t,\xi}_r)\cdot
Z^{t,\xi}_r(\eta)]$ is deterministic, for all $z\in {\mathbb R}$), also the
solution $Y^{t,x,\xi}(\eta)$ is independent of ${\cal F}_t$ and,
thus, of $\xi$. This allows to substitute for $x$ the random variable
$\xi$ in $Y^{t,x,\xi}(\eta)$, and from the equation (\ref{3.32}) for
$Y^{t,x,\xi}(\eta)$ and (\ref{3.26}) for $Y^{t,\xi}(\eta)$ and the uniqueness of their
solutions we get
\be\label{3.34}Y^{t,\xi}_s(\eta)=Y^{t,x,\xi}_s(\eta)\big|_{x=\xi},\,
s\in[t,T],\ \mbox{P-a.s.}\ee

\noindent Consequently, (\ref{3.33}) yields
\be\label{3.35}Y^{t,\xi}_s(\eta)=\partial_\xi X^{t,\xi,
P_\xi}_s(\eta)(:=\partial_\xi X^{t,x,P_\xi}_{s}(\eta)\big|_{x=\xi})=L^2\mbox{-}\lim_{h\rightarrow
0}\frac{1}{h}\left(X^{t,\xi,\xi+h\eta}_s-X^{t,\xi,\xi}_s\right), \ s\in[t, T],\ \eta\in L^2({\cal F}_t).\ee

\noindent\underline{Step 2}: A representation formula for $\partial_\xi X^{t,\xi,P_\xi}_s(\eta)$.

\medskip

Since $\xi\rightarrow X^{t,x,\xi}_s$ is G\^{a}teaux
differentiable, $\mu\rightarrow X^{t,x,\mu}_s$ is differentiable over ${\cal P}_2({\mathbb R})$ due to our
definition. Let us determine
this derivative at $P_{\xi}$.  For given
$\xi,\ \eta\in L^2({\cal F}_t)$ we denote by
$(\widetilde{\xi},\widetilde{\eta},\widetilde{B})$ a copy of
$(\xi,\eta,B)$ on $(\widetilde{\Omega},\widetilde{\cal F},
\widetilde{P})$; by $\widetilde{X}^{t,\widetilde{\xi}}$  we denote
the solution of the SDE for $X^{t,\xi}$ but now driven by the
Brownian motion $\widetilde{B}$ and with initial value
$\widetilde{\xi}$ instead of $\xi$, and $\widetilde{X}^{t,x,
\widetilde{P}_{\widetilde{\xi}}}$ denotes the solution of the SDE
for $X^{t,x,\xi}$ governed by $\widetilde{B}$:
\be\label{3.36}\widetilde{X}^{t,\widetilde{\xi}}_s=\widetilde{\xi}+\int_t^s\sigma
(\widetilde{X}^{t,\widetilde{\xi}}_r,\widetilde{P}_{\widetilde{X}^{t,
\widetilde{\xi}}_r})d\widetilde{B}_r+\int_t^sb(\widetilde{X}^{t,\widetilde{\xi}}_r,\widetilde{P}_{\widetilde{X}^{t,
\widetilde{\xi}}_r})dr,\ s\in[t,T],\ee
and
\be\label{3.37}\widetilde{X}^{t,x,\widetilde{P}_{\widetilde{\xi}}}_s=x+\int_t^s\sigma
(\widetilde{X}^{t,x,\widetilde{P}_{\widetilde{\xi}}}_r,
\widetilde{P}_{\widetilde{X}^{t,\widetilde{\xi}}_r})d\widetilde{B}_r+
\int_t^sb(\widetilde{X}^{t,x,\widetilde{P}_{\widetilde{\xi}}}_r,
\widetilde{P}_{\widetilde{X}^{t,\widetilde{\xi}}_r})dr,\
s\in[t,T].\ee

\noindent Obviously, $\widetilde{X}^{t,x,\widetilde{P}_{
\widetilde{\xi}}}=\widetilde{X}^{t,x,P_\xi},\ x\in {\mathbb R},$ and
$(\widetilde{\xi},\widetilde{\eta},\widetilde{X}^{t,x,P_\xi},
\widetilde{B})$ is an independent copy of $(\xi,\eta,X^{t,x,P_\xi},B)$,
defined over $(\widetilde{\Omega},\widetilde{\cal F},\widetilde{P})$.
Moreover, let $\partial_x\widetilde{X}^{t,x,P_\xi}_r$
denote the $L^2$ derivative of $\widetilde{X}^{t,
x,P_\xi}_r$, let $\partial_x\widetilde{X}^{t, \widetilde{\xi},P_\xi}_r
:=\partial_x\widetilde{X}^{t,x,P_\xi}_{r}\big|_{x=\widetilde{\xi}}$,
and by $\widetilde{Y}^{t,x,\widetilde{\xi}}(\widetilde{\eta})$ and
$\widetilde{Z}^{t,\widetilde{\xi}}(\widetilde{\eta})$
we denote the solutions of the equations for $Y^{t,x,\xi}(\eta)$ and $Z^{t,\xi}(\eta)$, respectively,
but with the data $(\widetilde{\xi},\widetilde{\eta},\widetilde{B},\widetilde{X}^{t,
x,P_\xi},\allowbreak \widetilde{X}^{t, \widetilde{\xi}})$
instead of $(\xi,\eta,B,X^{t, x,P_\xi},X^{t,\xi}).$

Using the such introduced notations, we have
\be\label{3.38}\begin{array}{lll}
& &D\widetilde{\sigma}(X^{t,x,\xi}_r,X^{t,\xi}_r)(Z^{t,\xi}_r(\eta)) =E [
\partial_\mu\sigma(z,P_{X^{t,\xi}_r},X^{t,\xi}_r)\cdot Z^{t,\xi}_r
(\eta) ]\big|_{z=X^{t,x,P_\xi}_r}\\
& & =
\widetilde{E} [\partial_\mu\sigma(X^{t,x,P_\xi}_r,P_{X^{t,\xi}_r},
\widetilde{X}^{t,\widetilde{\xi}}_r)\cdot\widetilde{Z}^{t,
\widetilde{\xi}}_r(\widetilde{\eta}) ],\end{array}\ee

\noindent and the same formula holds for $\widetilde{b}$. Thus, with the corresponding formula for
$D\widetilde{b}(X^{t,x,\xi}_r,X^{t,\xi}_r)(Z^{t,\xi}_r(\eta))$
we can give to SDE (\ref{3.26}) for $Y^{t,\xi}(\eta)$ the following form:
\be\label{3.39}\begin{array}{lll} Y^{t,\xi}_s(\eta)&=&\displaystyle\int_t^s\partial_x\sigma(X^{t,
\xi}_r,P_{X^{t,\xi}_r})Y^{t,\xi}_r(\eta)dB_r+\int_t^s
\partial_xb(X^{t,\xi}_r,P_{X^{t,\xi}_r})Y^{t,\xi}_r(\eta)dr\\
& &\displaystyle+\int_t^s
\widetilde{E} [\partial_\mu\sigma(X^{t,\xi}_r,P_{X^{t,\xi}_r},
\widetilde{X}^{t,\widetilde{\xi}}_r)\cdot
\widetilde{Z}^{t,\widetilde{\xi}}_r(\widetilde{\eta}) ]dB_r\\
& &\displaystyle+\int_t^s \widetilde{E} [\partial_\mu
b(X^{t,\xi}_r,P_{X^{t,\xi}_r},\widetilde{X}^{t,\widetilde{\xi}}_r)
\cdot\widetilde{Z}^{t,\widetilde{\xi}}_r(\widetilde{\eta}) ]
dr,\ s\in[t,T],\end{array}\ee
\noindent where $\widetilde{Z}^{t,\widetilde{\xi}}_r
(\widetilde{\eta})=\partial_x\widetilde{X}^{t,\widetilde{\xi},
P_\xi}_r\cdot\widetilde{\eta}+\widetilde{Y}^{t,\widetilde{\xi}}_r
(\widetilde{\eta}),\ r\in[t,T].$

\smallskip

\noindent Rewriting the equation for $Y^{t,x,P_\xi}(\eta)$ in the
same way, we obtain
\be\label{3.40}\begin{array}{lll} Y^{t,x,P_\xi}_s(\eta)&=&\displaystyle\int_t^s\partial_x
\sigma(X^{t,x,P_\xi}_r,P_{X^{t,\xi}_r})Y^{t,x,P_\xi}_r(\eta)dB_r
+\int_t^s\partial_x b(X^{t,x,P_\xi}_r,P_{X^{t,\xi}_r})Y^{t,x,P_\xi}_r(\eta)dr\\
& &\displaystyle+\int_t^s
\widetilde{E} [\partial_\mu\sigma(X^{t,x,P_\xi}_r,
P_{X^{t,\xi}_r},\widetilde{X}^{t,\widetilde{\xi}}_r)\cdot
\widetilde{Z}^{t,\widetilde{\xi}}_r(\widetilde{\eta}) ]dB_r\\
& &\displaystyle+\int_t^s \widetilde{E} [\partial_\mu
b(X^{t,x,P_\xi}_r,P_{X^{t,\xi}_r},\widetilde{X}^{t,
\widetilde{\xi}}_r)\cdot\widetilde{Z}^{t,\widetilde{\xi}}_r
(\widetilde{\eta}) ]
dr,\ s\in[t,T].\end{array}\ee

\smallskip

In order to analyze the structure of the above equations and
their solutions, let us also consider, for $y\in {\mathbb R}$, the unique
solution $U^{t,\xi}(y)=(U^{t,\xi}_s(y))_{s\in[t,T]}\in {\cal
S}_{\mathbb{F}}^2(t,T)$ of the SDE
\be\label{3.41}\begin{array}{lll} & U^{t,\xi}_s(y)= \displaystyle\int_t^s\partial_x\sigma(X^{t,\xi}_r,
P_{X^{t,\xi}_r})U^{t,\xi}_r(y)dB_r+\int_t^s\partial_xb(X^{t,
\xi}_r,P_{X^{t,\xi}_r})U^{t,\xi}_r(y)dr\\
&\ \ \ \displaystyle + \int_t^s \widetilde{E} [(\partial_\mu\sigma)(X^{t,\xi}_r,P_{X^{t,\xi}_r},
\widetilde{X}^{t,y,P_\xi}_r)\cdot \partial_x\widetilde{X}^{t,
y,P_\xi}_r+(\partial_\mu\sigma)(X^{t,\xi}_r,P_{X^{t,\xi}_r},
\widetilde{X}^{t,\widetilde{\xi}}_r)\cdot
\widetilde{U}^{t,\widetilde{\xi}}_r(y) ]dB_r\\
&\ \ \ \displaystyle + \int_t^s
\widetilde{E} [(\partial_\mu b)(X^{t,\xi}_r,P_{X^{t,\xi}_r},
\widetilde{X}^{t,y,P_\xi}_r)\cdot \partial_x\widetilde{X}^{t,
y,P_\xi}_r+(\partial_\mu b)(X^{t,\xi}_r,P_{X^{t,\xi}_r},
\widetilde{X}^{t,\widetilde{\xi}}_r)\cdot
\widetilde{U}^{t,\widetilde{\xi}}_r(y) ]dr,\end{array}\ee

\noindent $s\in[t,T]$, where $(\widetilde{U}^{t,\widetilde{\xi}}(y),
\widetilde{B})$ is supposed to follow under $\widetilde{P}$ exactly
the same law as $(U^{t,\xi}(y),B)$ under $B$ (one can consider
$\widetilde{U}^{t,\widetilde{\xi}}(y)$ as the unique solution of the
SDE for $U^{t,\xi}(y)$, but with the data $(\widetilde{\xi},
\widetilde{B})$ instead of $(\xi,B)$). Since the derivatives
$\partial_x\sigma,\ \partial_xb,\, \partial_\mu\sigma$ and
$\partial_\mu b$ are bounded and the process $\partial_xX^{t,y,P_\xi}$
is bounded in $L^2$ by a constant independent of $y\in {\mathbb R}^d$, it is
easy to prove the existence of the solution $U^{t,\xi}(y)$ for the
above SDE (\ref{3.41}) and to show that it is bounded in $L^2$ by a constant
independent of $y\in {\mathbb R}^d$.

The process $U^{t,\xi}(y)$ introduced above also allows to consider,
for all $x\in {\mathbb R}$, the unique solution $U^{t,x,P_\xi}(y)\in{\cal S}^2_{
\mathbb{F}}(t,T)$ of the following SDE:
\be\label{3.42}\begin{array}{lll} &U^{t,x,P_\xi}_s(y)=\displaystyle\int_t^s\partial_x
\sigma(X^{t,x,P_\xi}_r,P_{X^{t,\xi}_r})U^{t,x,P_\xi}_r(y)dB_r+
\int_t^s\partial_xb(X^{t,x,P_\xi}_r,P_{X^{t,\xi}_r})U^{t,x,
P_\xi}_r(y)dr\\
&\ \ \ \displaystyle+\int_t^s\widetilde{E} [(\partial_\mu
\sigma)(X^{t,x,P_\xi}_r,P_{X^{t,\xi}_r},
\widetilde{X}^{t,y,P_\xi}_r)\cdot \partial_x\widetilde{X}^{t,
y,P_\xi}_r+(\partial_\mu\sigma)(X^{t,x,P_\xi}_r,P_{X^{t,
\xi}_r},\widetilde{X}^{t,\widetilde{\xi}}_r)\widetilde{U}^{t,
\widetilde{\xi}}_r(y) ]dB_r\\
&\ \  \ \displaystyle+\int_t^s
\widetilde{E} [(\partial_\mu b)(X^{t,x,P_\xi}_r,
P_{X^{t,\xi}_r},\widetilde{X}^{t,y,P_\xi}_r)\cdot
\partial_x\widetilde{X}^{t,y,P_\xi}_r+(\partial_\mu b)(X^{t,x,
P_\xi}_r,P_{X^{t,\xi}_r},\widetilde{X}^{t,\widetilde{\xi}}_r)
\widetilde{U}^{t,\widetilde{\xi}}_r(y) ]dr,\end{array}\ee

\noindent $s\in[t,T]$. It is easy to verify that the solution
$U^{t,x,P_\xi}(y)$ is $(\sigma\{B_r-B_t,\ r\in[t,s]\})$-adapted
and, hence, independent of ${\cal F}_t$ and, in particular, of
$\xi$ (See the corresponding discussion
we made for $Y^{t,x,P_\xi}(\eta)$). Consequently, we can
substitute in $U^{t,x,P_\xi}(y)$ the random variable $\xi$
for $x$, and from the uniqueness of the solution of the
equation for $U^{t,\xi}(y)$ we deduce that
\be\label{3.43}U^{t,\xi}_s(y)=U^{t,x,P_\xi}_s(y)_{\big|x=\xi}
,\, s\in[t,T],\  \mbox{P-a.s.}\ee

\noindent The same argument of $(\sigma\{B_r-B_t,\
r\in[t,s]\})$-adaptedness allows also to substitute a random
variable for $y$ in  $U^{t,\xi}_s(y),$ which is independent
of the $\sigma\{B_r-B_t,\ r\in[t,T]\}$.

Let now $(\widehat{\xi}, \widehat{\eta}, \widehat{B})$
be a copy of $(\xi,\eta, B)$, independent of $(\xi,\eta,B)$
and $(\widetilde{\xi},\widetilde{\eta},\widetilde{B})$,
and defined over a new probability space $(\widehat{\Omega},
\widehat{\cal F},\widehat{P})$\ which is different from $(\Omega,
{\cal F}, P)$ and $(\widetilde{\Omega},\widetilde{\cal F},
\widetilde{P})$; the expectation $\widehat{E}[.]$ applies
only to random variables over $(\widehat{\Omega},
\widehat{\cal F},\widehat{P})$. This extension to
$(\widehat{\xi},\widehat{\eta},\widehat{E}[.])$ here is done
in the same spirit as that from $(\xi,\eta,B)$ to
$(\widetilde{\xi},\widetilde{\eta},\widetilde{B}).$

We will show that $\partial_\xi X^{t,x,\xi}_s(\eta)=Y^{t,x,\xi}_s
(\eta)=\widehat{E}[U^{t,x,P_\xi}_s(\widehat{\xi})\cdot
\widehat{\eta}],\ s\in[t,T]$, which would complete the proof concerning the representation formula for the G\^{a}teaux derivative (Recall that the expectation $\widehat{E}$ acts only on $\widehat{\eta}$). For this end, we prove first in a preparing
step that
\be\label{3.44}Y^{t,\xi}_s(\eta)=\widehat{E}[U^{t,\xi}_s
(\widehat{\xi})\cdot\widehat{\eta}],\ s\in[t,T].\ee

\noindent In order to obtain the above relation, we substitute in the
equation for $U^{t,\xi}(y)$ for $y$ the random variable
$\widehat{\xi}$ and we multiply both sides of the such
obtained equation by $\widehat{\eta}$ (Recall that
$\widehat{\xi}$ and $\widehat{\eta}$ are independent of all terms in the equation for $U^{t,\xi}(y)$). Then we take
the expectation $\widehat{E}[.]$ on both sides of the new
equation. This yields
\be\label{3.45}\begin{array}{lll} \widehat{E}[U^{t,\xi}_s
(\widehat{\xi})\cdot\widehat{\eta}]&=\displaystyle\int_t^s\partial_x\sigma(X^{t,\xi}_r,
P_{X^{t,\xi}_r})\widehat{E}[U^{t,\xi}_r(\widehat{\xi})\cdot
\widehat{\eta}]dB_r+\int_t^s\partial_xb(X^{t,
\xi}_r,P_{X^{t,\xi}_r})\widehat{E}[U^{t,\xi}_r(
\widehat{\xi})\cdot\widehat{\eta}]dr\\
&\ \ \ \displaystyle+\int_t^s
\widetilde{E} [\widehat{E} [(\partial_\mu\sigma)(X^{t,
\xi}_r,P_{X^{t,\xi}_r},\widetilde{X}^{t,\widehat{\xi},P_\xi}_r)
\cdot \partial_x\widetilde{X}^{t,\widehat{\xi},P_\xi}_r
\cdot\widehat{\eta} ] ]dB_r\\
&\ \ \ \displaystyle+\int_t^s
\widetilde{E} [(\partial_\mu\sigma)(X^{t,\xi}_r,
P_{X^{t,\xi}_r},\widetilde{X}^{t,\widetilde{\xi}}_r)\cdot
\widehat{E}[\widetilde{U}^{t,\widetilde{\xi}}_r(\widehat{\xi})
\cdot\widehat{\eta}] ]dB_r\\
&\ \ \ \displaystyle+\int_t^s
\widetilde{E} [\widehat{E} [(\partial_\mu b)(X^{t,
\xi}_r,P_{X^{t,\xi}_r},\widetilde{X}^{t,\widehat{\xi},P_\xi}_r)
\cdot \partial_x\widetilde{X}^{t,\widehat{\xi},P_\xi}_r
\cdot\widehat{\eta} ] ]dB_r\\
&\ \ \ \displaystyle+\int_t^s
\widetilde{E} [(\partial_\mu b)(X^{t,\xi}_r,
P_{X^{t,\xi}_r},\widetilde{X}^{t,\widetilde{\xi}}_r)\cdot
\widehat{E}[\widetilde{U}^{t,\widetilde{\xi}}_r(\widehat{\xi})
\cdot\widehat{\eta}] ]dr,\  s\in[t,T].\end{array}\ee

\noindent Taking into account that $(\widehat{\xi},
\widehat{\eta})$ is independent of $(\xi,\eta,B)$ and
$(\widetilde{\xi},\widetilde{\eta},\widetilde{B})$, and
of the same law under $\widehat{P}$ as $(\widetilde{\xi},
\widetilde{\eta})$ under $\widetilde{P}$, we see that
\be\label{3.46}\begin{array}{lll}
&\widetilde{E} [\widehat{E} [(\partial_\mu\sigma)(X^{t,
\xi}_r,P_{X^{t,\xi}_r},\widetilde{X}^{t,\widehat{\xi},P_\xi}_r)
\cdot \partial_x\widetilde{X}^{t,\widehat{\xi},P_\xi}_r
\cdot\widehat{\eta} ] ]\\
&=\widetilde{E} [(\partial_\mu\sigma)(X^{t,
\xi}_r,P_{X^{t,\xi}_r},\widetilde{X}^{t,\widetilde{\xi}}_r)
\cdot \partial_x\widetilde{X}^{t,\widetilde{\xi},P_\xi}_r
\cdot\widetilde{\eta} ],\end{array}\ee

\noindent  and the same relation also holds true for
$\partial_\mu b$ instead of $\partial_\mu\sigma.$ Thus,
the above equation takes the form
\be\label{3.47}\begin{array}{lll} \widehat{E}[U^{t,\xi}_s
(\widehat{\xi})\cdot\widehat{\eta}]=&\displaystyle\int_t^s\partial_x\sigma(X^{t,\xi}_r,
P_{X^{t,\xi}_r})\widehat{E}[U^{t,\xi}_r(\widehat{\xi})\cdot
\widehat{\eta}]dB_r+\int_t^s\partial_xb(X^{t,
\xi}_r,P_{X^{t,\xi}_r})\widehat{E}[U^{t,\xi}_r(
\widehat{\xi})\cdot\widehat{\eta}]dr\\
&\displaystyle +\int_t^s
\widetilde{E}[(\partial_\mu\sigma)(X^{t,\xi}_r,
P_{X^{t,\xi}_r},\widetilde{X}^{t,\widetilde{\xi}}_r)
\cdot (\partial_x\widetilde{X}^{t,\widetilde{\xi},P_\xi}_r\cdot
\widetilde{\eta}+\widehat{E}[\widetilde{U}^{t,\widetilde{\xi}}_r
(\widehat{\xi})\cdot\widehat{\eta}])]dB_r\\
&\displaystyle +\int_t^s
\widetilde{E}[(\partial_\mu b)(X^{t,\xi}_r,
P_{X^{t,\xi}_r},\widetilde{X}^{t,\widetilde{\xi}}_r)
\cdot(\partial_x\widetilde{X}^{t,\widetilde{\xi},P_\xi}_r\cdot
\widetilde{\eta}+\widehat{E}[\widetilde{U}^{t,\widetilde{\xi}}_r
(\widehat{\xi})\cdot\widehat{\eta}])]dr,\  s\in[t,T].\\ \end{array}\ee

\noindent But this latter SDE is just (\ref{3.26}) for $Y^{t,\xi}(\eta)$,
and from the uniqueness of the solution of this equation it
follows that
\be\label{3.48} Y^{t,\xi}_s(\eta)=\widehat{E}[U^{t,\xi}_s
(\widehat{\xi})\cdot\widehat{\eta}],\ s\in[t,T],\ee
\noindent i.e.,
\be\label{3.49} Z^{t,\xi}_s(\eta)=\partial_xX^{t,\xi,P_\xi}_s
\cdot\eta+\widehat{E}[U^{t,\xi}_s
(\widehat{\xi})\cdot\widehat{\eta}],\ s\in[t,T].\ee

\noindent Finally, we substitute $\widehat{\xi}$ for $y$
in the SDE for $U^{t,x,P_\xi}(y)$, we multiply both sides of the
such obtained equation by $\widehat{\eta}$ and take after
the expectation $\widehat{E}[.]$ at both sides of the relation.
Using the results of the above discussion, we see that this
yields
\be\label{3.50}\begin{array}{lll} &\widehat{E}[U^{t,x,P_\xi}_s
(\widehat{\xi})\cdot\widehat{\eta}]\\
&=\displaystyle\int_t^s\partial_x\sigma(X^{t,x,P_\xi}_r,
P_{X^{t,\xi}_r})\widehat{E}[U^{t,x,P_\xi}_r(\widehat{\xi})\cdot
\widehat{\eta}]dB_r+\int_t^s\partial_x b(X^{t,
x,P_\xi}_r,P_{X^{t,\xi}_r})\widehat{E}[U^{t,x,P_\xi}_r(
\widehat{\xi})\cdot\widehat{\eta}]dr\\
&\ \ \displaystyle+\int_t^s
\widetilde{E}[(\partial_\mu\sigma)(X^{t,x,P_\xi}_r,
P_{X^{t,\xi}_r},\widetilde{X}^{t,\widetilde{\xi}}_r)
\cdot (\partial_x\widetilde{X}^{t,\widetilde{\xi},P_\xi}_r\cdot
\widetilde{\eta}+\widehat{E}[\widetilde{U}^{t,\widetilde{\xi}}_r
(\widehat{\xi})\cdot\widehat{\eta}])]dB_r\\
&\ \ \displaystyle+\int_t^s
\widetilde{E}[(\partial_\mu b)(X^{t,x,P_\xi}_r,
P_{X^{t,\xi}_r},\widetilde{X}^{t,\widetilde{\xi}}_r)
\cdot(\partial_x\widetilde{X}^{t,\widetilde{\xi},P_\xi}_r\cdot
\widetilde{\eta}+\widehat{E}[\widetilde{U}^{t,\widetilde{\xi}}_r
(\widehat{\xi})\cdot\widehat{\eta}])]dr,\  s\in[t,T].\end{array}\ee

\noindent Using that $\widetilde{Z}^{t,\widetilde{\xi}}_s
(\widetilde{\eta})=\partial_x\widetilde{X}^{t,
\widetilde{\xi},P_\xi}_s\cdot\widetilde{\eta}+
\widehat{E}[\widetilde{U}^{t,\widetilde{\xi}}_s
(\widehat{\xi})\cdot\widehat{\eta}],\ s\in[t,T]$ (see (\ref{3.49})), we see that the latter SDE is just that satisfied by
$Y^{t,x,P_\xi}(\eta)$. Therefore, from the uniqueness of the
solution of this SDE it follows that
\be\label{3.51}\partial_\xi {X}^{t,x,\xi}_s(\eta)=Y^{t,x,P_\xi}_s(\eta)=\widehat{E}[U^{t,x,P_\xi}_s
(\widehat{\xi})\cdot\widehat{\eta}],\ s\in[t,T],\ \eta\in L^2({\cal F}_t, P).\ee

\noindent The proof for the G\^{a}teaux derivative is complete now.\end{proof}

After having proved the existence of the G\^{a}teaux derivative
$\partial_\xi X^{t,x,\xi}_s(\eta)$\ and its representation formula, let us study the regularity of
$U^{t,x,P_\xi}(y)$. In a first step we study it
under the assumption of the preceding proposition, i.e.,
under Hypothesis (H.1).
\begin{lemma} Assume (H.1) and let $U^{t,x,P_\xi}$\ denote the unique solution of (\ref{3.42}). Then, for all
$p\ge 2$ there is some constant $C_p\in {\mathbb R}$ such that, for all $t\in [0,T],\ x,\ x',\ y,\ y'\in {\mathbb R}^d$ and $\xi,\ \xi'\in L^2({\cal F}_t;{\mathbb R}^d)$,
\be\label{3.52}\begin{array}{lll}
&{\rm i)}\ E [\sup_{s\in[t,T]}|U^{t,x, P_\xi}_s(y)|^p]\le C_p,\\
&{\rm ii)}\ E [\sup_{s\in[t,T]}|U^{t,x,P_\xi}_s(y)- U^{t,x',P_{\xi'}}_s(y')|^p ]\le C_p(|x-x'|^p+|y-y'|^p+W_2(P_\xi,P_{\xi'})^p).\\
\end{array}\ee
\end{lemma}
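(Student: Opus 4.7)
\textbf{Strategy for (i).} The SDE (\ref{3.42}) satisfied by $U^{t,x,P_\xi}(y)$ is linear in $U^{t,x,P_\xi}$ with bounded coefficients $\partial_x\sigma, \partial_xb$, plus ``forcing'' terms of the form $\widetilde E[(\partial_\mu\sigma)\cdot \partial_x\widetilde X^{t,y,P_\xi} + (\partial_\mu\sigma)\cdot \widetilde U^{t,\widetilde\xi}(y)]$ and similarly for $b$. Since $\partial_\mu\sigma, \partial_\mu b$ are bounded and the first-order flow derivative $\partial_x\widetilde X^{t,y,P_\xi}$ is uniformly bounded in $L^p$ (cf.\ (\ref{3.24})), while $\widetilde E[|\widetilde U^{t,\widetilde\xi}_r(y)|^p] = E[|U^{t,\xi}_r(y)|^p]$ is a deterministic function, we obtain a pointwise bound of the forcing by $C_p(1 + E[|U^{t,\xi}_r(y)|^p])^{1/p}$. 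We therefore first treat (\ref{3.41}): a standard BDG inequality followed by Gronwall's lemma applied to $v\mapsto E[\sup_{s\in[t,v]}|U^{t,\xi}_s(y)|^p]$ yields the uniform estimate $E[\sup_{s\in[t,T]}|U^{t,\xi}_s(y)|^p]\le C_p$. Plugging this bound back into (\ref{3.42}) and running the same BDG--Gronwall procedure then gives (i).

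\textbf{Auxiliary Lipschitz estimate on $\partial_x X^{t,y,P_\xi}$.} Before tackling (ii), we need the $L^p$-Lipschitz continuity of $(y,P_\xi)\mapsto \partial_x X^{t,y,P_\xi}$. The linear SDE (\ref{3.23}) for this derivative has coefficients $\partial_x\sigma(X^{t,y,P_\xi}_r, P_{X^{t,\xi}_r})$ and $\partial_xb(X^{t,y,P_\xi}_r, P_{X^{t,\xi}_r})$ that are Lipschitz in their arguments by Hypothesis (H.1); combining this with the Lipschitz estimates of Lemma 3.1 for $X^{t,y,P_\xi}$ and $W_2(P_{X^{t,\xi}_r}, P_{X^{t,\xi'}_r})$, a standard difference argument plus Gronwall yields
$$E[\sup_{s\in[t,T]}|\partial_x X^{t,y,P_\xi}_s - \partial_x X^{t,y',P_{\xi'}}_s|^p] \le C_p(|y-y'|^p + W_2(P_\xi,P_{\xi'})^p).$$

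\textbf{Strategy for (ii).} I would first prove the analogous estimate for $U^{t,\xi}(y)$, namely
$$E[\sup_{s\in[t,T]}|U^{t,\xi}_s(y) - U^{t,\xi'}_s(y')|^p] \le C_p(|y-y'|^p + W_2(P_\xi,P_{\xi'})^p),$$
by writing the SDE for the difference from (\ref{3.41}). The diagonal terms contribute $\partial_x\sigma(X^{t,\xi}_r,P_{X^{t,\xi}_r})(U^{t,\xi}_r(y)-U^{t,\xi'}_r(y'))$, and the perturbation terms are controlled via: Lipschitz continuity of $\partial_x\sigma, \partial_xb, \partial_\mu\sigma, \partial_\mu b$; Lemma 3.1 applied to $X^{t,\xi}, X^{t,y,P_\xi}, X^{t,\widetilde\xi}$; the $L^p$-bound (i) on $U^{t,\xi'}(y')$; and the Lipschitz estimate on $\partial_x\widetilde X$ just established. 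The delicate piece is the term $\widetilde E[(\partial_\mu\sigma)(\ldots,\widetilde X^{t,\widetilde\xi}_r)\widetilde U^{t,\widetilde\xi}_r(y) - (\partial_\mu\sigma)(\ldots,\widetilde X^{t,\widetilde\xi'}_r)\widetilde U^{t,\widetilde\xi'}_r(y')]$: by coupling $(\widetilde\xi,\widetilde\xi',\widetilde B)$ as an independent copy of $(\xi,\xi',B)$ (with the joint law of $(\xi,\xi')$ chosen to realize the $W_2$ optimum), the resulting contribution is dominated by the very quantity we are estimating, producing a self-consistent Gronwall argument. A final substitution $x=\xi, x'=\xi'$ is not used directly; rather, once (ii) holds for $U^{t,\xi}$, we write the difference SDE (\ref{3.42}) for $U^{t,x,P_\xi}(y)-U^{t,x',P_{\xi'}}(y')$, use the just-obtained Lipschitz bound to control the $\widetilde E[\cdot]$ forcing differences, and close by BDG + Gronwall.

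\textbf{Main obstacle.} The principal technical difficulty is to arrange the couplings of the tilde variables so that the Lipschitz estimate appears on the right-hand side in terms of $W_2(P_\xi,P_{\xi'})$ rather than $E[|\xi-\xi'|^p]$: this requires exploiting that $U^{t,x,P_\xi}$ depends on $\xi$ only through $P_\xi$, so that we are free to take $(\xi,\xi')$ (and hence $(\widetilde\xi,\widetilde\xi')$) to realize the optimal $W_2$ coupling, exactly as in the proof of Lemma 3.1. A second point of care is the interaction between the self-consistent term (from $\widetilde U^{t,\widetilde\xi}$) and the non-self-consistent forcing (from $\partial_x\widetilde X^{t,y,P_\xi}$), which is what forces the two-step bootstrap: first $U^{t,\xi}$, then $U^{t,x,P_\xi}$.
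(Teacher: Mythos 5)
Your proposal is correct and follows essentially the same route as the paper: establish the auxiliary $L^p$-bounds and Lipschitz estimates for $X^{t,x,P_\xi}$ and $\partial_x X^{t,x,P_\xi}$ (the paper's estimates (\ref{3.53}) and (\ref{3.55})), get the uniform bound for $U^{t,\xi}(y)$ and $U^{t,x,P_\xi}(y)$ by BDG and Gronwall, and then close the Lipschitz estimate through the difference SDEs, the key point being the Lipschitz/boundedness control of the $\partial_\mu\sigma$-forcing terms (the paper's (\ref{3.59})--(\ref{3.60})) together with a Gronwall argument in which the $\widetilde U^{t,\widetilde\xi}$ term is self-consistent. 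The only cosmetic difference is that you bootstrap $U^{t,\xi}$ first and then $U^{t,x,P_\xi}$, while the paper estimates both quantities simultaneously in a single coupled Gronwall, and your remark about choosing $(\xi,\xi')$ in the optimal $W_2$ coupling is exactly how the paper (as in Lemma 3.1) converts $E[|\xi-\xi'|^2]$ into $W_2(P_\xi,P_{\xi'})$.
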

\begin{proof} As in the preceding proof we restrict ourselves to the one-dimensional case $d=1$,
and to simplify a bit more we suppose also that $b=0$. Estimates
which are standard, are only indicated here without proof.

Let us recall from Lemma 3.1 and its proof that, for all $p\ge 2$,
there is a constant $C_p\in {\mathbb R}$ such that, for all $t\in[0,T],\
x,\ x'\in {\mathbb R},\ \xi,\ \xi'\in L^2({\cal F}_t)$,
\be\label{3.53}\begin{array}{lll}
&{\rm i)}\ E [\sup_{s\in[t,T]}|X^{t,x,P_\xi}_s|^p ]\le C_p(1+|x|^p),\\
&{\rm ii)}\ E [\sup_{s\in[t,T]}|X^{t,x,P_\xi}_s- X^{t,x',P_{\xi'}}_s|^p ]\le C_p(|x-x'|^p+W_2(P_\xi,P_{\xi'})^p),\\
&{\rm iii)}\ W_2(P_{X^{t,\xi}_s},P_{X^{t,\xi'}_s})\le C_2W_2(P_\xi,P_{\xi'}).\end{array}\ee

\noindent Next we consider the SDE for $\partial_xX^{t,x,P_\xi}$,
\be\label{3.54}\partial_xX^{t,x,P_\xi}_s=1+\int_t^s(\partial_x\sigma)
(X^{t,x,P_\xi}_r,P_{X^{t,\xi}_r})\partial_xX^{t,x,P_\xi}_r dB_r,\
s\in[t,T].\ee

\noindent With standard estimates we see here that, for all $p\ge 2,$
there exists a constant $C_p$ such that, for all $(t,x),\ (t',x')\in[0,T]
\times {\mathbb R},\ \xi,\ \xi'\in L^2({\cal F}_t)$,
\be\label{3.55}\begin{array}{lll}
&{\rm iv)}\  E [\sup_{s\in[0,T]}|\partial_xX^{t,x,P_\xi}_s|^p ]\le C_p,\\
&{\rm v)}\   E [\sup_{s\in[0,T]}|\partial_xX^{t,x,P_\xi}_s-\partial_xX^{t,x',P_{\xi'}}_s|^p ]\le C_p(|x-x'|^p+W_2(P_{\xi},P_{\xi'})^p).\end{array}\ee

\noindent Using the notations from the proof of Theorem 3.1, $U^{t,x,P_\xi}(y)$ is the unique
solution of the SDE
\be\label{3.56}\begin{array}{lll} &U^{t,x,P_\xi}_s(y)=\displaystyle\int_t^s\partial_x
\sigma(X^{t,x,P_\xi}_r,P_{X^{t,\xi}_r})U^{t,x,P_\xi}_r(y)dB_r\\
&\ \displaystyle+\int_t^s\widetilde{E}[(\partial_\mu
\sigma)(X^{t,x,P_\xi}_r,P_{X^{t,\xi}_r},
\widetilde{X}^{t,y,P_\xi}_r)\cdot \partial_x\widetilde{X}^{t,
y,P_\xi}_r+(\partial_\mu\sigma)(X^{t,x,P_\xi}_r,P_{X^{t,
\xi}_r},\widetilde{X}^{t,\widetilde{\xi}}_r)\widetilde{U}^{t,
\widetilde{\xi}}_r(y)]dB_r,\end{array}\ee

\noindent where
\be\label{3.57}\begin{array}{lll} &U^{t,\xi}_s(y)=\displaystyle\int_t^s\partial_x\sigma(X^{t,\xi}_r,
P_{X^{t,\xi}_r})U^{t,\xi}_r(y)dB_r\\
&\ \ \displaystyle+\int_t^s
\widetilde{E}[(\partial_\mu\sigma)(X^{t,\xi}_r,P_{X^{t,\xi}_r},
\widetilde{X}^{t,y,P_\xi}_r)\cdot \partial_x\widetilde{X}^{t,
y,P_\xi}_r+(\partial_\mu\sigma)(X^{t,\xi}_r,P_{X^{t,\xi}_r},
\widetilde{X}^{t,\widetilde{\xi}}_r)\cdot
\widetilde{U}^{t,\widetilde{\xi}}_r(y)]dB_r,\ \end{array}\ee

\noindent$s\in[t,T].$ Again by standard arguments we see that, for all
$t\in[0,T],\ x, x', y, y'\in {\mathbb R}$ and $\xi,\ \xi'\in L^2({\cal F}_t)$,
\be\label{3.58}\begin{array}{lll}
&{\rm vi)}\  E[\sup_{s\in[t,T]}(|U^{t,x,
P_\xi}_s(y)|^p+|U^{t,\xi}_s(y)|^p)]\le C_p,\\
&{\rm vii)}\  E[\sup_{s\in[t,T]}(|U^{t,x,
P_\xi}_s(y)- U^{t,x',P_{\xi'}}_s(y')|^p+|U^{t,\xi}_s(y)- U^{t,\xi'}_s(y')|^p
)]\\
&\ \ \ \ \ \ \ \le C_p(|x-x'|^p+|y-y'|^p+W_2(P_\xi,P_{\xi'})^p).\end{array}\ee

\noindent The proof of vi) is trivial. In order to prove vii), we notice that its central ingredient is the following
estimate, which uses the Lipschitz property of $\partial_\mu\sigma$
with respect to all its variables as well as the boundedness of $\partial_\mu\sigma$
and the estimate vi):
\be\label{3.59}\begin{array}{lll}
&E[|\widetilde{E}[(\partial_\mu
\sigma)(X^{t,x,P_\xi}_r,P_{X^{t,\xi}_r},
\widetilde{X}^{t,y,P_\xi}_r)\cdot \partial_x\widetilde{X}^{t,
y,P_\xi}_r-(\partial_\mu
\sigma)(X^{t,x',P_{\xi'}}_r,P_{X^{t,\xi'}_r},
\widetilde{X}^{t,y',P_{\xi'}}_r)\cdot \partial_x\widetilde{X}^{t,
y',P_{\xi'}}_r]|^p]\\
&\le C_p (E[|X^{t,x,P_\xi}_r-X^{t,x',P_{\xi'}}_r|^{2p}+
(\widetilde{E}[|\widetilde{X}^{t,y,P_\xi}_r-
\widetilde{X}^{t,y',P_{\xi'}}_r|^2])^p]
+W_2(P_{X^{t,\xi}_r},P_{X^{t,\xi'}_r})^{2p})^{1/2}\\
&\ \ + C_p(\widetilde{E}[|\partial_x
\widetilde{X}^{t,y,P_\xi}_s- \partial_x\widetilde{X}^{t,y',
P_{\xi'}}_s|^2])^{p/2}.\end{array}\ee

\noindent Once having the above estimate, we can use now the estimates
ii), iii) and v), in order to deduce that
\be\label{3.60}\begin{array}{lll}
&E[\|\widetilde{E}[(\partial_\mu
\sigma)(X^{t,x,P_\xi}_r,P_{X^{t,\xi}_r},
\widetilde{X}^{t,y,P_\xi}_r)\cdot \partial_x\widetilde{X}^{t,
y,P_\xi}_r-(\partial_\mu
\sigma)(X^{t,x',P_{\xi'}}_r,P_{X^{t,\xi'}_r},
\widetilde{X}^{t,y',P_{\xi'}}_r)\cdot \partial_x\widetilde{X}^{t,
y',P_{\xi'}}_r]\|^p]\\
&\ \ \le C_p(|x-x'|^p+|y-y'|^p+W_2(P_\xi,P_{\xi'})^p).\end{array}\ee

\smallskip

\noindent Using this latter estimate, the proof of vii) reduces to an
application of Gronwall's Lemma. This completes the proof.\end{proof}

We are now able to complete the proof of Theorem 3.1 by showing that the G\^{a}teaux derivative $\partial_{\xi} {X}^{t,
x,P_\xi}_s(.)$\ of $L^2({\cal F}_t, {\mathbb R}^d)\ni \xi \rightarrow {X}^{t,x,P_\xi}_s\in L^2({\cal F}_s, {\mathbb R}^d)$\ is in fact a Fr\'{e}chet derivative.
\begin{proof} (of Theorem 3.1 (sequel)). We restrict ourselves again to the case $d=1$. In order to show that the G\^{a}teaux derivative $\partial_{\xi} {X}^{t,
x,\xi}_s(.)$\ of $L^2({\cal F}_t)\ni \xi \mapsto {X}^{t,x,\xi}_s\in L^2({\cal F}_s)$\ is even a Fr\'{e}chet derivative, we have to prove that, for all $(t, x)\in [0, T]\times {\mathbb R}$, and $\xi\in L^2({\cal F}_t),$
$$E[\sup_{s\in [t, T]}|{X}^{t,x,\xi+\eta}_s-{X}^{t,x,\xi}_s-\partial_{\xi} {X}^{t,x,\xi}_s(\eta)|^2] =o(|\eta|^2_{L^2}),$$

\noindent as $|\eta|_{L^2}\rightarrow 0\ (\eta\in L^2({\cal F}_t))$.

Fixing $(t, x)\in [0, T]\times {\mathbb R}$, and $\xi\in L^2({\cal F}_t)$, and letting $\eta\in L^2({\cal F}_t)$, we observe that the G\^{a}teaux differentiability of $\xi\mapsto {X}^{t,x,\xi}_s$ has as consequence the differentiability of the $L^2({\cal F}_s)$-valued function $$\varphi_s(r):={X}^{t,x,\xi+r\eta}_s,\ r\in {\mathbb R},$$
in all $r\in {\mathbb R}$, and $\frac{d}{dr}\varphi_s(r)=\partial_\xi{X}^{t,x,\xi+r\eta}_s,\ r\in {\mathbb R}$. Consequently,
$${X}^{t,x,\xi+\eta}_s-{X}^{t,x,\xi}_s=\int_0^1\partial_\xi{X}^{t,x,\xi+r\eta}_s(\eta)dr,\ s\in [t, T],$$
and
$$\begin{array}{lll}
& E[\sup_{s\in [t, T]}|{X}^{t,x,\xi+\eta}_s-{X}^{t,x,\xi}_s-\partial_{\xi} {X}^{t,x,\xi}_s(\eta)|^2]\\
&\leq \displaystyle E[\sup_{s\in [t, T]}\int_0^1|\partial_\xi{X}^{t,x,\xi+r\eta}_s(\eta)-\partial_\xi{X}^{t,x,\xi}_s(\eta)|^2dr].
\end{array}
$$
Hence, due to (\ref{3.51}), with the notations of the preceding proof,
$$\begin{array}{lll}
& E[\sup_{s\in [t, T]}|{X}^{t,x,\xi+\eta}_s-{X}^{t,x,\xi}_s-\partial_{\xi} {X}^{t,x,\xi}_s(\eta)|^2]\\
&\leq \displaystyle E[\sup_{s\in [t, T]}\int_0^1| \widehat{{E}}[({U}^{t,x,P_{\xi+r\eta}}_s(\widehat{\xi}+r\widehat{\eta})-U^{t,x,P_{\xi}}_s( \widehat{\xi}))\widehat{\eta}]|^2dr]\\
&\leq \displaystyle \widehat{E}[|\widehat{\eta}|^2]\int_0^1 \widehat{E}[E[\sup_{s\in [t, T]}|{U}^{t,x,P_{\xi+r\eta}}_s(\widehat{\xi}+r\widehat{\eta})-U^{t,x,P_{\xi}}_s(\widehat{\xi})|^2]]dr.
\end{array}
$$
We observe that (\ref{3.52}) yields
$$\begin{array}{lll}
&\displaystyle E[\sup_{s\in [t, T]}|{U}^{t,x,P_{\xi+r\eta}}_s(\widehat{\xi}+r\widehat{\eta})-U^{t,x,P_{\xi}}_s( \widehat{\xi})|^2]\\
&\leq \displaystyle C(|\widehat{\eta}|^2+W_2(P_{\xi+r\eta}, P_{\xi})^2)\\
&\leq \displaystyle C(|\widehat{\eta}|^2+E[\eta^2]).
\end{array}
$$
Consequently,
$$E[\sup_{s\in [t, T]}|{X}^{t,x,\xi+\eta}_s-{X}^{t,x,\xi}_s-\partial_{\xi} {X}^{t,x,\xi}_s(\eta)|^2]\leq CE[\eta^2]=o(|\eta|^2_{L^2}).$$
Therefore, $\xi\mapsto {X}^{t,x,\xi}_s$ is Fr\'{e}chet differentiable, the Fr\'{e}chet derivative $D{X}^{t,x,\xi}_s(.)$ coincides with $\partial_{\xi}{X}^{t,x,\xi}_s(.)$ and we have (\ref{3.16}). The proof of Theorem 3.1 is complete.
\end{proof}

\section{Second order derivatives of ${X}^{t,x,P_\xi}$}

Let us come now to the study of the second order derivatives of
the process $X^{t,x,P_\xi}$. For this we shall suppose the following in the remaining part of the paper:

\noindent\underline{\textbf{Hypothesis (H.2)}} Let $(\sigma,b)$ belong to
$C^{2,1}_b({\mathbb R}^d\times {\cal P}_2({\mathbb R}^d)\rightarrow {\mathbb R}^{d\times d}\times
{\mathbb R}^d)$, i.e.,  $(\sigma, b)\in C^{1,1}_b({\mathbb R}^d\times {\cal P}_2({\mathbb R}^d)
\rightarrow {\mathbb R}^{d\times d}\times {\mathbb R}^d)$ (See Hypothesis (H.1)) and the
derivatives of the components $\sigma_{i,j},\ b_j$, $1\le i, j\le d$,
have the following properties:

\smallskip

i) $\partial_k\sigma_{i,j}(.,.),\partial_kb_j(.,.)$ belong to $C^{1,l}
({\mathbb R}^d \times {\cal P}_2({\mathbb R}^d))$, for all $1\le k\le d$;

ii)$\partial_\mu\sigma_{i,j}(.,.,.),\partial_\mu b_j(.,.,.)$ belongs to
$C^{1,1}({\mathbb R}^d\times {\cal P}_2({\mathbb R}^d)\times {\mathbb R}^d)$;

iii) All the derivatives of $\sigma_{i,j},b_j$ up to order 2 are
bounded and Lipschitz.

\br With the existence of the second order mixed derivatives, $\partial_{x_l}(\partial_\mu\sigma_{i,j}(x,\mu,y))$\ and $\partial_\mu(\partial_{x_l}\sigma_{i,j}(x,\mu))(y)$, $(x, \mu, y)\in {\mathbb R}^d\times {\cal P}_2({\mathbb R}^d)\times {\mathbb R}^d$, the question of their equality raises. Indeed, under hypothesis (H.2) they coincide, and similar to those for $b$. More precisely, we have the following statement.
\er
\bl Let $g\in C^{2,1}_b({\mathbb R}^d\times {\cal P}_2({\mathbb R}^d)\rightarrow {\mathbb R}^{d\times d}\times
{\mathbb R}^d)$ (in the sense of hypothesis (H.2)). Then, for all $1\leq l\leq d$,
$$\partial_{x_l}(\partial_\mu g(x,\mu,y))=\partial_\mu(\partial_{x_l}g(x,\mu))(y),\ (x, \mu, y)\in {\mathbb R}^d\times {\cal P}_2({\mathbb R}^d)\times {\mathbb R}^d.$$
\el
\begin{proof} Let us restrict to $d=1$. Following the argument of Clairot's Theorem, we have, for all $(x, \xi),\ (z, \eta)\in {\mathbb R}^d\times L^2(\cal{F})$,
$$\begin{array}{lll}
I:=&\displaystyle(g(x+z, P_{\xi+\eta})-g(x+z, P_{\xi}))-(g(x, P_{\xi+\eta})-g(x, P_{\xi}))\\
=&\displaystyle\int_0^1 E[\left((\partial_\mu g)(x+z, P_{\xi+s\eta}, \xi+s\eta)- (\partial_\mu g)(x, P_{\xi+s\eta}, \xi+s\eta)\right)\eta]ds\\
=&\displaystyle\int_0^1\int_0^1E[\partial_x(\partial_\mu g)(x+tz, P_{\xi+s\eta}, \xi+s\eta)\eta z]dsdt\\
=&\displaystyle E[\partial_x(\partial_\mu g)(x, P_{\xi}, \xi)\eta z]+R_1((x,\xi), (z, \eta)),
\end{array}
$$
\noindent with $|R_1((x,\xi), (z, \eta))|\leq 2C |z|^2 E[\eta^2]$, and at the same time
$$\begin{array}{lll}
I:=&\displaystyle(g(x+z, P_{\xi+\eta})-g(x, P_{\xi+\eta}))-(g(x+z, P_{\xi})-g(x, P_{\xi}))\\
=&\displaystyle\int_0^1  \left((\partial_x g)(x+tz, P_{\xi+\eta})- (\partial_x g)(x+tz, P_\xi)\right)dt\cdot z\\
=&\displaystyle\int_0^1\int_0^1E[\partial_\mu(\partial_x g)(x+tz, P_{\xi+s\eta})(\xi+s\eta)\eta]dsdt\cdot z\\
=&\displaystyle E[\partial_\mu(\partial_x g)(x, P_{\xi}, \xi)\eta] z+R_2((x,\xi), (z, \eta)),
\end{array}
$$
\noindent with $|R_2((x,\xi), (z, \eta))|\leq 2C |z|^2 E[\eta^2]$, where $C$ is the Lipschitz constant of $\partial_\mu(\partial_x g)$ and $\partial_x(\partial_\mu g)$. It follows that $\partial_x(\partial_\mu g)(x, P_\xi, \xi)=\partial_\mu((\partial_x g)(x, P_\xi))( \xi)$, P-a.s., and, hence,
$$\partial_x(\partial_\mu g)(x, P_\xi, y)=\partial_\mu((\partial_x g)(x, P_\xi))(y),\ P_\xi(dy)\mbox{-a.e.}$$

Letting $\varepsilon>0$ and $\theta$ be a standard normally distributed random variable, which is independent of $\xi$ , and taking $\xi+\varepsilon \theta$\ instead of $\xi$, we have
$$\partial_x(\partial_\mu g)(x, P_{\xi+\varepsilon \theta}, y)=\partial_\mu(\partial_x g)(x, P_{\xi+\varepsilon \theta}, y),\ P_{\xi+\varepsilon \theta}(dy)\mbox{-a.e.},$$
\noindent and, thus, $dy$-a.s. on ${\mathbb R}$. Taking into account that $\partial_x(\partial_\mu g), \partial_\mu(\partial_x g)$ are Lipschitz, this yields
$$\partial_x(\partial_\mu g)(x, P_{\xi+\varepsilon \theta}, y)=\partial_\mu(\partial_x g)(x, P_{\xi+\varepsilon \theta}, y),\  \mbox{for all}\ y\in
{\mathbb R}.$$
\noindent Finally, using $W_2(P_{\xi+\varepsilon \theta}, P_{\xi})\leq \varepsilon(E[\theta^2])^{\frac{1}{2}}=C\varepsilon$, and again the Lipschitz property of $\partial_x(\partial_\mu g)$\ and $\partial_\mu(\partial_x g)$, we obtain
$$\partial_x(\partial_\mu g)(x, P_{\xi}, y)=\partial_\mu(\partial_x g)(x, P_{\xi})(y),\  \mbox{for all}\ x,\ y\in
{\mathbb R},\ \xi\in L^2({\cal F}).$$
\noindent The proof is complete.
\end{proof}

\begin{proposition} Under Hypothesis (H.2) the
first order derivatives $\partial_{x_i}X^{t,x,P_\xi}_s$ and
$\partial_{\mu}X^{t,x,P_\xi}_s(y)$ are in $L^2$-sense
differentiable with respect to $x$ and $y$, and interpreted as functional
of $\xi\in L^2({\cal F}_t)$ they are also Fr\'{e}chet differentiable
with respect to $\xi$. Moreover, for all $t\in [0,T],\ x,y,z\in {\mathbb R}$ and
$\xi\in L^2({\cal F}_t)$, there are stochastic processes
$\partial_\mu(\partial_{x_i} X^{t,x,P_\xi})(y)$, $1\le i\le d$, and
$\partial^2_\mu X^{t,x,P_\xi}(y,z)=\partial_\mu(\partial_\mu
X^{t,x,P_\xi})(y,z)$ in ${\cal S}^2_{\mathbb{F}}(t,T)$ such that, for
all $\eta \in L^2({\cal F}_t)$ the Fr\'{e}chet derivatives in $\xi$, $D_\xi[\partial_x X^{t,x,P_\xi}_s](.)$ and $D_\xi[\partial_\mu
X^{t,x,P_\xi}_s(y)](.)$ satisfy
\be\label{3.61}\begin{array}{lll}
&{\rm i)}\ D_\xi[\partial_{x_i} X^{t,x,P_\xi}_s](\eta)=\widehat{E}[\partial_\mu (\partial_{x_i}X^{t,x,P_\xi})(\widehat{\xi})\cdot\widehat{\eta}],\\
&{\rm ii)}\ D_\xi[\partial_\mu X^{t,x,P_\xi}_s(y)](\eta)=\widehat{E}[\partial^2_\mu X^{t,x,P_\xi}(y,\widehat{\xi})\cdot\widehat{\eta}],\ s\in[t,T],\ y\in {\mathbb R}^d.\end{array}\ee
\noindent Furthermore, the mixed second order derivatives $\partial_{x_i}(\partial_\mu X^{t,x,P_\xi}(y))$\ and $\partial_\mu(\partial_{x_i} X^{t,x,P_\xi})(y)$
coincide, i.e.,
$$\partial_{x_i}(\partial_\mu X^{t,x,P_\xi}_s(y))=\partial_\mu(\partial_{x_i} X^{t,x,P_\xi}_s)(y),\ s\in [t, T],\ \mbox{P-a.s.},$$
\noindent and for
$$M^{t,x,P_\xi}_s(y,z):=(\partial_{x_i x_j}^2
X^{t,x,P_\xi}_s,\partial_\mu (\partial_{x_i}X^{t,x,P_\xi})(y), \partial_\mu^2  X^{t,x,P_\xi}_s(y,z),
\partial_{y_i}(\partial_\mu X^{t,x,P_\xi}_s(y))),$$

\noindent $1\le i\le d,$
we have that, for all $p\ge 2$, there is some constant $C_p\in {\mathbb R}$, such that, for all $t\in [0,T],\ x,\ x',\ y,\ y',\ z,\ z'$ and $\xi,\ \xi'\in L^2({\cal F}_t)$,
\be\label{3.62}\begin{array}{lll}
&{\rm iii)}\ E[\sup_{s\in[t,T]}|M_s^{t,x,P_\xi}(y,z)|^p ]\le C_p,\\
&{\rm iv)}\  E[\sup_{s\in[t,T]}|M_s^{t,x,P_\xi}(y,z)-M_s^{t,x',P_{\xi'}}(y',z')|^p]\\
&\ \ \ \ \le C_p(|x-x'|^p+|y-y'|^p+|z-z'|^p+W_2(P_\xi,P_{\xi'})^p).\end{array}\ee\end{proposition}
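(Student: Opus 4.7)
The plan is to construct each of the second-order derivative processes as the unique solution of a linear SDE obtained by formally differentiating the SDEs for the first-order derivatives $\partial_{x_i}X^{t,x,P_\xi}$ (see (\ref{3.23})) and $U^{t,x,P_\xi}(y)=\partial_\mu X^{t,x,P_\xi}(y)$ (see (\ref{3.42})), and then to identify these solutions with the actual derivatives by an argument that mirrors the two-step scheme (G\^{a}teaux, then Fr\'{e}chet) used in Theorem 3.1 and Lemma 3.2. For the derivatives in $x$ and $y$, since the coefficients $\sigma,b$ are twice continuously differentiable with bounded second-order derivatives in $x$ (by (H.2)), classical $L^2$-differentiation of parameter-dependent SDEs gives $\partial^2_{x_ix_j}X^{t,x,P_\xi}$ and $\partial_{x_i}(\partial_\mu X^{t,x,P_\xi}(y))$, $\partial_{y_i}(\partial_\mu X^{t,x,P_\xi}(y))$ as solutions of linear SDEs with bounded coefficients; this is the same argument already used for $\partial_x X^{t,x,P_\xi}$ in Step 1 of the proof of Theorem 3.1.

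For the Fr\'{e}chet derivatives in $\xi$, the plan is to define $\partial_\mu(\partial_{x_i}X^{t,x,P_\xi})(y)$ as the unique solution of the linear SDE obtained by taking $\partial_\mu(\cdot)(y)$ formally in (\ref{3.23}); this SDE's coefficients involve $\partial^2_{x_ix_k}\sigma$, $\partial^2_{x_ix_k}b$, $\partial_\mu(\partial_{x_i}\sigma)(\cdot)(y)$, $\partial_\mu(\partial_{x_i}b)(\cdot)(y)$ (all bounded and Lipschitz by (H.2)), the process $\partial_\mu X^{t,x,P_\xi}(y)$ itself, and, in the coupled terms, a tilded copy $\widetilde{\partial_\mu(\partial_{x_i}X^{t,\widetilde\xi})}(y)$ — exactly the pattern of (\ref{3.41})--(\ref{3.42}). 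Analogously, $\partial^2_\mu X^{t,x,P_\xi}(y,z)$ is defined as the unique solution of a linear SDE in which $\partial_\mu$ has been applied a second time in direction $z$; this requires, in particular, $\partial_y\partial_\mu\sigma$, $\partial_y\partial_\mu b$ and $\partial^2_\mu\sigma$, $\partial^2_\mu b$ (bounded, Lipschitz under (H.2)) together with the first-order derivative processes $\partial_xX$, $\partial_\mu X$ and their tilded copies as inputs. One then repeats, word for word, the identification argument of Step 2 of the proof of Theorem 3.1: compute $h^{-1}(\partial_{x_i}X^{t,x,P_{\xi+h\eta}}_s-\partial_{x_i}X^{t,x,P_\xi}_s)$ and $h^{-1}(\partial_\mu X^{t,x,P_{\xi+h\eta}}_s(y)-\partial_\mu X^{t,x,P_\xi}_s(y))$, show that the limits satisfy the same SDEs as $\widehat{E}[\partial_\mu(\partial_{x_i}X^{t,x,P_\xi})(\widehat\xi)\cdot\widehat\eta]$ and $\widehat{E}[\partial^2_\mu X^{t,x,P_\xi}(y,\widehat\xi)\cdot\widehat\eta]$ respectively, and conclude by uniqueness, thus getting the G\^{a}teaux derivatives. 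The Fr\'{e}chet property is then upgraded exactly as in the closing step of the proof of Theorem 3.1, using the Lipschitz estimate (iv) once it is established.

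The equality of mixed partials $\partial_{x_i}(\partial_\mu X^{t,x,P_\xi}_s(y))=\partial_\mu(\partial_{x_i}X^{t,x,P_\xi}_s)(y)$ follows from Lemma 4.1 applied componentwise in $L^2(\Omega,{\cal F}_s)$: both derivatives exist by the preceding steps, and comparing the two double-increment expansions of $X^{t,x,P_\xi}_s$ in the pair of variables $(x,\xi)$ — one via the $x$-then-$\mu$ route, the other via the $\mu$-then-$x$ route — yields identical leading terms with remainders $o(|z||\eta|_{L^2})$, forcing the two SDEs that they satisfy to coincide. The bounds (iii) and (iv) are then obtained by standard Burkholder--Davis--Gundy and Gr\"onwall arguments applied to the linear SDEs just constructed; the inhomogeneous source terms are products/compositions of bounded second-order derivatives of $\sigma,b$ with the first-order derivative processes, whose $L^p$-bounds and Lipschitz dependence on $(x,y,P_\xi)$ are given by Lemma 3.1 and Lemma 3.2.

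The main obstacle will be the bookkeeping in the SDE for $\partial^2_\mu X^{t,x,P_\xi}(y,z)$: three separate independent copies of the underlying probability space are required (one for $\partial_\mu$ in direction $y$, one for $\partial_\mu$ in direction $z$, and one for the initial condition $\xi$), and identifying the candidate solution with the Fr\'{e}chet derivative of $\partial_\mu X^{t,x,P_\xi}_s(y)$ in $\xi$ requires applying the substitution-and-expectation trick of (\ref{3.44})--(\ref{3.51}) twice, at different ``levels'' of the independent copies, while being careful that the tilded coefficient terms in the SDE are of the form prescribed by the definition $\partial^2_\mu$ in (\ref{2.4}) rather than some non-equivalent variant. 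Once that structural identification is made, the analytic estimates (iii)--(iv) reduce to a (lengthy but) routine iteration of the arguments of Lemma 3.2.
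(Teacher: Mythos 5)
Your plan coincides with the paper's proof in all essentials: the second--order derivative processes are introduced as the unique solutions of the linear SDEs obtained by formally differentiating the equations (\ref{3.23}) and (\ref{3.42}) for $\partial_x X^{t,x,P_\xi}$ and $U^{t,x,P_\xi}(y)$, they are identified with the G\^{a}teaux derivatives by the substitution-and-expectation trick of (\ref{3.44})--(\ref{3.51}) (applied once for $\partial_\mu\partial_x X$ and twice, over the extra independent copies, for $\partial^2_\mu X(y,z)$), the Fr\'{e}chet property is then upgraded exactly as in the closing step of Theorem 3.1 using the Lipschitz estimate iv), and the bounds iii)--iv) follow by Burkholder--Davis--Gundy and Gronwall as in Lemma 3.2. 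The only minor divergence is the symmetry of the mixed derivatives: the paper applies its Clairaut-type lemma for $g\in C^{2,1}_b$ to the coefficients $\sigma,b$, observes that the SDE for $\partial_x U^{t,x,P_\xi}(y)$ then coincides with (\ref{3.68}), and concludes by uniqueness of the solution, whereas you run the double-increment (Schwarz) argument directly on the $L^2$-valued map $(x,\xi)\mapsto X^{t,x,\xi}_s$ --- equivalent in substance, though the paper's variant uses only (H.2) while yours also needs the Lipschitz estimate iv) for the second--order derivative processes, so it must come after those estimates are established.
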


\begin{proof} For simplicity of the redaction let us restrict ourselves again to the case of dimension $d=1$ and $b=0$. Recall that
\be\label{3.63}\partial_x X^{t,x,P_\xi}_s=1+\int_t^s(\partial_x\sigma)
(X^{t,x,P_\xi}_r,P_{X^{t,\xi}_r})\partial_x X^{t,x,P_\xi}_rdB_r,\
s\in[t,T].\ee
\noindent Thanks to our assumptions on $\sigma$ and our estimates for
$\partial_x X^{t,x,P_\xi}$ as well as the differentiability
properties of $X^{t,x,\xi}$ with respect to the measure, we see with the help of standard arguments that $L^2({\cal F}_t)\ni\xi\mapsto
\partial_x X^{t,x,\xi}:=\partial_x X^{t,x,P_\xi}\in
L^2({\cal F}_s)$ is G\^{a}teaux differentiable, and its
directional derivative in direction $\eta\in L^2({\cal F}_t)$,
\be\label{3.64}\partial_\xi[\partial_x X^{t,x,\xi}_s](\eta)=L^2\mbox{-}
\lim_{h\rightarrow 0}\frac{1}{h}\left(\partial_x X^{t,x,\xi+h\eta}_s
-\partial_x X^{t,x,\xi}_s\right) \ee

\noindent satisfies the equation
\be\label{3.65}\begin{array}{lll} \partial_\xi[\partial_x X^{t,x,\xi}](\eta)&=\displaystyle
\int_t^s(\partial_x\sigma)(X^{t,x,P_\xi}_r,P_{X^{t,\xi}_r})
\cdot \partial_\xi[\partial_x X^{t,x,P_\xi}_r](\eta)dB_r\\
&\ \ \displaystyle+\int_t^s(\partial_x^2\sigma)(X^{t,x,
P_\xi}_r,P_{X^{t,\xi}_r})\partial_x X^{t,x,\xi}_r\partial_\xi[X^{t,x,
\xi}_r](\eta)dB_r \hfill(=:I_1)\\
&\ \ \displaystyle+\int_t^s \partial_x X^{t,x,P_\xi}_r(D_\xi (\widetilde{\partial_x \sigma}))(X^{t,x,P_\xi}_r, \widetilde{X}^{t,\widetilde{\xi}}_r)(\partial_\xi[\widetilde{X}^{t,\widetilde{\xi}}_r](\widetilde{\eta}))dB_r, \hfill(=:I_2)\end{array}\ee

\noindent $s\in[t,T]$, where $\widetilde{\partial_x \sigma}(X^{t,x,P_\xi}_r, \widetilde{X}^{t,\widetilde{\xi}}_r)= ({\partial_x \sigma})(X^{t,x,P_\xi}_r, P_{{X}^{t,{\xi}}_r})$, $r\in [t, T]$, is the lifted process, and $D_\xi (\widetilde{\partial_x \sigma})$ the Fr\'{e}chet derivative of $\widetilde{\partial_x \sigma}$ with respect to its second variable.

Note that, with the results of Theorem 3.1 and the notations and the arguments we have used in its proof, we have $$D_\xi (\widetilde{\partial_x \sigma})(X^{t,x,P_\xi}_r,\widetilde{X}^{t,\widetilde{\xi}}_r)(\partial_\xi[\widetilde{X}^{t,\widetilde{\xi}}_r](\widetilde{\eta}))=\widetilde{E}[\partial_\mu(\partial_x\sigma)
(X^{t,x,P_\xi}_r, P_{{X}^{t,{\xi}}_r}, \widetilde{{X}}^{t,\widetilde{\xi}}_r)\partial_\xi[\widetilde{X}^{t,\widetilde{\xi}}_r(\widetilde{\eta})]], r\in [t, T],$$
and
\be\label{3.66}\begin{array}{lll}
&I_1=\displaystyle\widehat{E}\left[\int_t^s(\partial_x^2\sigma)(X^{t,x,P_\xi}_r,
P_{X^{t,\xi}_r})\partial_x X^{t,x,\xi}_r\partial_\mu X^{t,x,P_\xi}_r(\widehat{\xi})dB_r
\cdot\widehat{\eta}\right],\\
&I_2=\displaystyle\widehat{E}\bigg[\int_t^s\widetilde{E}\big[\partial_\mu(\partial_x
\sigma)(X^{t,x,P_\xi}_r,P_{X^{t,\xi}_r},\widetilde{X}^{t,\widehat{\xi},P_\xi}_r)
\cdot\partial_x \widetilde{X}^{t,\widehat{\xi},P_\xi}_r\\
&\ \ \ \ \ \ \ \   +\partial_\mu(\partial_x
\sigma)(X^{t,x,P_\xi}_r,P_{X^{t,\xi}_r},\widetilde{X}^{t,\widetilde{\xi}}_r)
\cdot\partial_\mu\widetilde{X}^{t,\widetilde{\xi},P_\xi}(\widehat{\xi})\big]\partial_x X^{t,x,P_\xi}_r
dB_r\cdot\widehat{\eta}\bigg].\end{array}\ee

\noindent Consequently,
\be\label{3.67} \partial_\xi[\partial_x X^{t,x,\xi}_s](\eta)=\widehat{E}
\left[\partial_\mu\partial_x X^{t,x,P_\xi}_s(\widehat{\xi})\cdot\widehat{\eta}
\right],\ s\in[t,T],\ee

\smallskip

\noindent where the process $\partial_\mu\partial_x X^{t,x,P_\xi}
(y)$ ($x,y\in {\mathbb R}$) is the unique solution in ${\cal S}^2_{\mathbb{F}}(t,T)$
of the equation
\be\label{3.68}\begin{array}{lll}\partial_\mu\partial_x X^{t,x,P_\xi}_s(y)=&\displaystyle
\int_t^s(\partial_x\sigma)(X^{t,x,P_\xi}_r,P_{X^{t,\xi}_r})
\cdot \partial_\mu\partial_x X^{t,x,P_\xi}_r(y)dB_r\\
&+\displaystyle\int_t^s(\partial_x^2\sigma)(X^{t,x,P_\xi}_r,
P_{X^{t,\xi}_r})\partial_x X^{t,x,P_\xi}_r\partial_\mu X^{t,x,P_\xi}_r
(y)dB_r\\
&+\displaystyle\int_t^s\widetilde{E}\big[\partial_\mu(\partial_x\sigma)
(X^{t,x,P_\xi}_r,P_{X^{t,\xi}_r},\widetilde{X}^{t,y,P_\xi}_r)\cdot
\partial_x\widetilde{X}^{t,y,P_\xi}_r \big]\partial_x X^{t,x,P_\xi}_r dB_r\\
&+\displaystyle\int_t^s\widetilde{E}\big[\partial_\mu(\partial_x\sigma)(X^{t,x,P_\xi}_r,P_{X^{t,\xi}_r},
\widetilde{X}^{t,\widetilde{\xi}}_r)\partial_\mu \widetilde{X}^{t,
\widetilde{\xi},P_\xi}_r(y)\big]\partial_x X^{t,x,P_\xi}_r dB_r,\end{array}\ee

\noindent $s\in[t,T]$. Moreover, on the basis of the estimates we have already
gotten before, we see that, for all $p\ge 2$,  there is some
constant $C_p\in {\mathbb R}$, such that, for all $t\in {\mathbb R},\ x,\ x',\ y,\ y'\in {\mathbb R},$
and $\xi,\ \xi'\in L^2({\cal F}_t)$,
\be\label{3.69}\begin{array}{lll}
&{\rm i)}\ E\left[\sup_{s\in[t,T]}|\partial_\mu\partial_x
X^{t,x,P_\xi}_s(y)|^p\right]\le C_p,\\
&{\rm ii)}\ E\left[\sup_{s\in[t,T]}|\partial_\mu\partial_x
X^{t,x,P_\xi}_s(y)-\partial_\mu\partial_x
X^{t,x',P_{\xi'}}_s(y')|^p\right]\le C_p(|x-x'|^p+|y-y'|^p+W_2(P_\xi,
P_{\xi'})^p).\end{array}\ee

This second estimate combined with (\ref{3.67}) allows to show in analogy to the second part of the proof of Theorem 3.1 that
$L^2({\cal F}_t)\ni\xi\mapsto \partial_x X^{t,x,\xi}_s\in L^2({\cal F}_s)$\ is Fr\'{e}chet differentiable, and the Fr\'{e}chet derivative satisfies
$$D_\xi[\partial_x X^{t,x,\xi}_s](\eta)=\widehat{E}[\partial_\mu\partial_x X^{t,x,P_\xi}_s(\widehat{\xi})\widehat{\eta}],\ s\in [t, T],\ \eta\in L^2({\cal F}_t).$$

This shows that, due to our definition, $\partial_\mu\partial_x X^{t,x,P_\xi}_s(y)$\ is the derivative of $\partial_x X^{t,x,P_\xi}_s$ with respect to the probability measure at $P_\xi$. Let us now consider equation (\ref{3.56}) for $\partial_\mu X^{t,x,P_\xi}_s(y)=U^{t,x,P_\xi}_s(y)$. It is standard to show that $x\mapsto U^{t,x,P_\xi}_s(y)\in L^2({\cal F}_s)$ is differentiable, and the derivative satisfies the equation:

$$\label{100}\begin{array}{lll} &\partial_x U^{t,x,P_\xi}_s(y)\\
&=\displaystyle\int_t^s\left((\partial^2_x
\sigma)(X^{t,x,P_\xi}_r,P_{X^{t,\xi}_r})\partial_x X^{t,x,P_\xi}_r\cdot U^{t,x,P_\xi}_r(y)+
 (\partial_x\sigma)(X^{t,x,P_\xi}_r,P_{X^{t,\xi}_r})\partial_x U^{t,x,
P_\xi}_r(y)\right)dB_r\\
&\ \ \ \displaystyle+\int_t^s\widetilde{E}\left[\partial_x(\partial_\mu
\sigma)(X^{t,x,P_\xi}_r,P_{X^{t,\xi}_r},
\widetilde{X}^{t,y,P_\xi}_r)\partial_x X^{t,x,P_\xi}_r \cdot \partial_x\widetilde{X}^{t,
y,P_\xi}_r\right.\\ 
&\ \ \ \ \ \ +\left.\partial_x(\partial_\mu\sigma)(X^{t,x,P_\xi}_r,P_{X^{t,
\xi}_r},\widetilde{X}^{t,\widetilde{\xi}}_r)\partial_x X^{t,x,P_\xi}_r \widetilde{U}^{t,
\widetilde{\xi}}_r(y) \right]dB_r,\ \ s\in [t, T].\end{array}$$

However, since $\partial_x(\partial_\mu \sigma)=\partial_\mu(\partial_x \sigma)$ (see Lemma 3.3), the above equation coincides with (\ref{3.68}), and it follows from the uniqueness of the solution that 
$$\partial_x\partial_\mu X^{t,x,P_\xi}_s(y)= \partial_x U^{t,x,P_\xi}_s(y)=\partial_\mu\partial_x X^{t,x,P_\xi}_s(y),$$
\noindent $s\in [t, T],\ x,\ y\in {\mathbb R},\ \xi\in L^2({\cal F}_t).$

  After having studied the second order mixed derivatives $\partial_\mu\partial_x X^{t,x,P_\xi}_s(y)$ and $\partial_x\partial_\mu X^{t,x,P_\xi}_s(y)$, and identified them, let us come now to
the second order derivative with respect to the measure, $\partial^2_\mu
X^{t,x,P_\xi}_s(y,z)$, $y,z\in {\mathbb R}.$

Recall the SDE solved by $\partial_\mu X^{t,x,P_\xi}(y)
=U^{t,x,P_\xi}(y)$ and that solved by $U^{t,\xi}(y)
(=U^{t,x,P_\xi}(y)_{\big|x=\xi}$). Taking under account the
assumptions made on $\sigma$ and its derivatives, we see
that the mappings $L^2({\cal F}_t)\ni\xi\mapsto
U^{t,x,\xi}_s(y):=U^{t,x,P_\xi}_s(y)\in L^2({\cal F}_s)$, and $\xi \mapsto U^{t,\xi}_s(y)
\in L^2({\cal F}_s)$ are G\^{a}teaux differentiable, and the
directional derivative $\partial_\xi[U^{t,x,\xi}_s(y)](\eta)$ in direction
$\eta\in L^2({\cal F}_t)$ satisfies the SDE 
\be\label{3.70}\begin{array}{lll}
&\partial_\xi[U^{t,x,P_\xi}_s(y)](\eta) =\displaystyle\int_t^s(\partial_x
\sigma)(X^{t,x,P_\xi}_r,P_{X^{t,\xi}_r})\partial_\xi[U^{t,x,P_\xi}_r(y)]
(\eta)dB_r\\
& \ \ \displaystyle+\int_t^s(\partial_x^2\sigma)(X^{t,x,P_\xi}_r,
P_{X^{t,\xi}_r})U^{t,x,P_\xi}_r(y)\widehat{E}[U_r^{t,x,P_\xi}
(\widehat{\xi})\cdot\widehat{\eta}]dB_r\\
& \ \ \displaystyle+\int_t^s\widetilde{E}[\partial_\mu
(\partial_x\sigma)(X^{t,x,P_\xi}_r,P_{X^{t,\xi}_r},\widetilde{
X}^{t,\widetilde{\xi}}_r)U^{t,x,P_\xi}_r(y)(\partial_x
\widetilde{X}^{t,\widetilde{\xi},P_\xi}_r\cdot\widetilde{\eta}+
\widehat{E}[\widetilde{U}^{t,\widetilde{\xi}}_r(\widehat{\xi})
\cdot\widehat{\eta}])]dB_r\\
& \ \ \displaystyle+\int_t^s\widetilde{E}[(\partial_\mu\sigma)
(X^{t,x,P_\xi}_r,P_{X^{t,\xi}_r},\widetilde{X}^{t,y,P_\xi}_r)
\cdot\widehat{E}[\partial_\mu\partial_x\widetilde{X}^{t,y,P_\xi}_r
(\widehat{\xi})\cdot\widehat{\eta}]]dB_r\\
& \ \ \displaystyle+\int_t^s\widetilde{E}[\partial_x
(\partial_\mu\sigma)(X^{t,x,P_\xi}_r,P_{X^{t,\xi}_r},
\widetilde{X}^{t,y,P_\xi}_r)\cdot\partial_x\widetilde{X}^{t,
y,P_\xi}_r]\cdot\widehat{E}[U^{t,x,P_\xi}_r(\widehat{\xi})
\cdot\widehat{\eta}] dB_r\\
& \ \ \displaystyle+\int_t^s\widetilde{E}[\overline{E}[\partial_\mu(
\partial_\mu\sigma)(X^{t,x,P_\xi}_r,P_{X^{t,\xi}_r},
\widetilde{X}^{t,y,P_\xi}_r,\overline{X}^{t,\overline{\xi}}_r)\cdot
\partial_x\widetilde{X}^{t,y,P_\xi}_r(\partial_x\overline{X}^{t,
\overline{\xi},P_\xi}_r\cdot\overline{\eta}+\widehat{E}
[\overline{U}^{t,\overline{\xi}}_r(\widehat{\xi})\cdot\widehat{\eta}
])]]dB_r\\
& \ \ \displaystyle+\int_t^s\widetilde{E}[\partial_y
(\partial_\mu\sigma)(X^{t,x,P_\xi}_r,P_{X^{t,\xi}_r},
\widetilde{X}^{t,y,P_\xi}_r)\cdot\partial_x\widetilde{X}^{t,
y,P_\xi}_r\cdot\widehat{E}[\widetilde{U}^{t,y,P_\xi}_r(\widehat{\xi})
\cdot\widehat{\eta}]]dB_r\\
& \ \ \displaystyle+\int_t^s\widetilde{E}[(\partial_\mu\sigma)
(X^{t,x,P_\xi}_r,P_{X^{t,\xi}_r},\widetilde{X}^{t,\widetilde{\xi}}_r)
\cdot \partial_\xi[\widetilde{U}^{t,\widetilde{\xi}}_r(y)](\widetilde{\eta})]
dB_r\\
& \ \ \displaystyle+\int_t^s\widetilde{E}[\partial_x
(\partial_\mu\sigma)(X^{t,x,P_\xi}_r,P_{X^{t,\xi}_r},
\widetilde{X}^{t,\widetilde{\xi}}_r)\cdot\widetilde{U}^{t,
\widetilde{\xi}}_r(y)]\cdot\widehat{E}[U^{t,x,P_\xi}_r(\widehat{\xi})
\cdot\widehat{\eta}]dB_r\\
& \ \ \displaystyle+\int_t^s\widetilde{E}[\overline{E}[\partial_\mu(
\partial_\mu\sigma)(X^{t,x,P_\xi}_r,P_{X^{t,\xi}_r},
\widetilde{X}^{t,\widetilde{\xi}}_r,\overline{X}^{t,
\overline{\xi}}_r)\cdot\widetilde{U}^{t,\widetilde{\xi}}_r(y)
(\partial_x\overline{X}^{t,\overline{\xi},P_\xi}_r\cdot\overline{\eta}+ \widehat{E}[\overline{U}^{t,\overline{\xi}}_r(\widehat{\xi})
\cdot\widehat{\eta}])]]dB_r\\
& \ \ \displaystyle+\int_t^s\widetilde{E}[\partial_y
(\partial_\mu\sigma)(X^{t,x,P_\xi}_r,P_{X^{t,\xi}_r},
\widetilde{X}^{t,\widetilde{\xi}}_r)\cdot\widetilde{U}^{t,
\widetilde{\xi}}_r(y)\cdot(\partial_x\widetilde{X}^{t,
\widetilde{\xi},P_\xi}_r\cdot\widetilde{\eta}+\widehat{E}[\widetilde{U}^{t,
\widetilde{\xi}}_r(\widehat{\xi})\cdot\widehat{\eta}])]dB_r,\end{array}\ee

\noindent $s\in[t,T].$ Here we have used the notation
$(\overline{X}^{t,\overline{\xi}},\overline{U}^{t,\overline{\xi}}(y))$;
it is used in the same sense as the corresponding processes endowed with
$\widetilde{\quad}$ or $\widehat{\quad}$: We consider a copy $(
\overline{\xi},\overline{\eta},\overline{B})$ independent of $(\xi,
\eta,B),(\widetilde{\xi},\widetilde{\eta},\widetilde{B})$ and
$(\widehat{\xi},\widehat{\eta},\widehat{B})$, and the process
$\overline{X}^{t,\overline{\xi}}$ is the solution of the SDE for
$X^{t,\xi}$ and $\overline{U}^{t,\overline{\xi}}$ that of the SDE
for $U^{t,\xi}$, but both with the data $(\overline{\xi},\overline{B})$
instead of $(\xi,B)$.

Let us comment also the expression $\partial^2_\mu\sigma(x,P_\vartheta,y,z)
=\partial_\mu
(\partial_\mu\sigma)(x,P_\vartheta,y,z)$ in the above formula.
Recalling that $\partial^2_\mu\sigma(x,P_\vartheta,y,z)=\partial_\mu
(\partial_\mu\sigma)(x,P_\vartheta,y,z)$ is defined through the relation
$\partial_\vartheta[\widetilde{\partial_\mu\sigma}(x,\vartheta,y)](\theta)=
E[\partial^2_\mu\sigma(x,P_\vartheta,y,\vartheta)\cdot\theta],$ for
$\vartheta,\ \theta\in L^2({\cal F}),\ x,\ y\in {\mathbb R}$, we have namely for the
G\^{a}teaux derivative of $L^2({\cal F})\ni\vartheta\mapsto
\widetilde{\partial_\mu\sigma}(x,\vartheta,y):=(\partial_\mu\sigma)(x,
P_\vartheta,y)$ in direction $\theta\in L^2({\cal F})$,
\be\label{3.71}\partial_\vartheta[\widetilde{\partial_\mu\sigma}(x,\vartheta,y)](\theta)=
\overline{E}[(\partial_\mu^2\sigma)(x,P_\vartheta,y,
\overline{\vartheta})\cdot\overline{\theta}].\ee

\noindent Then, of course,
\be\label{3.72}\begin{array}{lll}&\partial_\vartheta[\widetilde{\partial_\mu\sigma}(X^{t,x,P_\xi}_r,X^{t,\xi}_r,
\widetilde{X}^{t,y,P_\xi})](D_\xi[X^{t,\xi}_r](\eta))\\
=&\overline{E}[(\partial_\mu^2\sigma)(X^{t,x,P_\xi}_r, P_{X^{t,\xi}_r},
\widetilde{X}^{t,y,P_\xi},\overline{X}^{t,\overline{\xi}}_r)\cdot(\partial_x X^{t,\xi,P_\xi}_r\cdot \eta+
\widehat{E}[\overline{U}^{t,\overline{\xi}}_r(\widehat{\xi})
\cdot\widehat{\eta}])],\end{array}\ee

\smallskip

\noindent $r\in[t,T],$ but this is just, what has been used for the
above formula in combination with arguments already developed in the
proof of Theorem 3.1. The SDE solved by $\partial_\xi [U^{t,\xi}(y)]$ is obtained by substituting $x=\xi$ in the equation for $\partial_\xi[U^{t,x,P_\xi}(y)]$
(Recall that $X^{t,\xi}=X^{t,x,P_\xi}|_{x=\xi}$\ and $U^{t,\xi}=U^{t,x,P_\xi}|_{x=\xi}$).

\smallskip

Let us now consider the process $U^{t,x,P_\xi}(y,z)=(U^{t,x,P_\xi}_s(y,z))_{s\in[t,T]}
\in {\cal S}^2_{\mathbb{F}}(t,T)$ defined as the unique solution of
the following SDE:
\be\label{3.74}\begin{array}{lll}
&U^{t,x,P_\xi}_s(y,z)=\displaystyle\int_t^s(\partial_x
\sigma)(X^{t,x,P_\xi}_r,P_{X^{t,\xi}_r})U^{t,x,P_\xi}_r(y,z)dB_r\\
&\ \ +\int_t^s(\partial_x^2\sigma)(X^{t,x,P_\xi}_r,
P_{X^{t,\xi}_r})U^{t,x,P_\xi}_r(y)\cdot U_r^{t,x,P_\xi}(z)dB_r\\
&\ \ \displaystyle+\int_t^s\widetilde{E}[\partial_\mu
(\partial_x\sigma)(X^{t,x,P_\xi}_r,P_{X^{t,\xi}_r},\widetilde{
X}^{t,z,P_\xi}_r)U^{t,x,P_\xi}_r(y)\cdot\partial_x
\widetilde{X}^{t,z,P_\xi}_r]dB_r\\
&\ \ \displaystyle+\int_t^s\widetilde{E}[\partial_\mu
(\partial_x\sigma)(X^{t,x,P_\xi}_r,P_{X^{t,\xi}_r},\widetilde{
X}^{t,\widetilde{\xi}}_r)U^{t,x,P_\xi}_r(y)
\widetilde{U}^{t,\widetilde{\xi}}_r(z)]dB_r\\
&\ \ \displaystyle+\int_t^s\widetilde{E}[(\partial_\mu\sigma)
(X^{t,x,P_\xi}_r,P_{X^{t,\xi}_r},\widetilde{X}^{t,y,P_\xi}_r)
\cdot\partial_\mu\partial_x\widetilde{X}^{t,y,P_\xi}_r
(z)]dB_r\\
&\ \ \displaystyle+\int_t^s\widetilde{E}[\partial_x
(\partial_\mu\sigma)(X^{t,x,P_\xi}_r,P_{X^{t,\xi}_r},
\widetilde{X}^{t,y,P_\xi}_r)\cdot\partial_x\widetilde{X}^{t,
y,P_\xi}_r]\cdot U^{t,x,P_\xi}_r(z)dB_r\\
&\ \ \displaystyle+\int_t^s\widetilde{E}[\overline{E}[\partial_\mu(
\partial_\mu\sigma)(X^{t,x,P_\xi}_r,P_{X^{t,\xi}_r},
\widetilde{X}^{t,y,P_\xi}_r,\overline{X}^{t,z,P_{\xi}}_r)\cdot
\partial_x\widetilde{X}^{t,y,P_\xi}_r\cdot\partial_x\overline{X}^{t,
z,P_\xi}_r]]dB_r\\
&\ \ \displaystyle+\int_t^s\widetilde{E}[\overline{E}[\partial_\mu(
\partial_\mu\sigma)(X^{t,x,P_\xi}_r,P_{X^{t,\xi}_r},
\widetilde{X}^{t,y,P_\xi}_r,\overline{X}^{t,\overline{\xi}}_r)\cdot
\partial_x\widetilde{X}^{t,y,P_\xi}_r\overline{U}^{t,
\overline{\xi}}_r(z)]]dB_r\\
&\ \ \displaystyle+\int_t^s\widetilde{E}[\partial_y
(\partial_\mu\sigma)(X^{t,x,P_\xi}_r,P_{X^{t,\xi}_r},
\widetilde{X}^{t,y,P_\xi}_r)\cdot\partial_x\widetilde{X}^{t,
y,P_\xi}_r\widetilde{U}^{t,y,P_\xi}_r(z)]dB_r\\
&\ \ \displaystyle+\int_t^s\widetilde{E}[(\partial_\mu\sigma)
(X^{t,x,P_\xi}_r,P_{X^{t,\xi}_r},\widetilde{X}^{t,\widetilde{\xi}}_r)
\cdot \widetilde{U}^{t,\widetilde{\xi}}_r(y,z)]dB_r\\
&\ \ \displaystyle+\int_t^s\widetilde{E}[\partial_x
(\partial_\mu\sigma)(X^{t,x,P_\xi}_r,P_{X^{t,\xi}_r},
\widetilde{X}^{t,\widetilde{\xi}}_r)\cdot\widetilde{U}^{t,
\widetilde{\xi}}_r(y)]\cdot U^{t,x,P_\xi}_r(z)dB_r\\
&\ \ \displaystyle+\int_t^s\widetilde{E}[\overline{E}[\partial_\mu(
\partial_\mu\sigma)(X^{t,x,P_\xi}_r,P_{X^{t,\xi}_r},
\widetilde{X}^{t,\widetilde{\xi}}_r,\overline{X}^{t,z,
P_{\xi}}_r)\cdot\widetilde{U}^{t,\widetilde{\xi}}_r(y)\cdot
\partial_x\overline{X}^{t,z,P_\xi}_r]]dB_r\\
&\ \ \displaystyle+\int_t^s\widetilde{E}[\overline{E}[\partial_\mu(
\partial_\mu\sigma)(X^{t,x,P_\xi}_r,P_{X^{t,\xi}_r},
\widetilde{X}^{t,\widetilde{\xi}}_r,\overline{X}^{t,
\overline{\xi}}_r)\cdot\widetilde{U}^{t,\widetilde{\xi}}_r(y)\cdot
\overline{U}^{t,\overline{\xi}}_r(z)_r]]dB_r\\
&\ \ \displaystyle+\int_t^s\widetilde{E}[\partial_y
(\partial_\mu\sigma)(X^{t,x,P_\xi}_r,P_{X^{t,\xi}_r},
\widetilde{X}^{t,z,P_{\xi}}_r)\cdot\widetilde{U}^{t,
z,P_{\xi}}_r(y)\cdot\partial_x\widetilde{X}^{t,
z,P_{\xi}}_r]dB_r\\
&\ \ \displaystyle+\int_t^s\widetilde{E}[\partial_y
(\partial_\mu\sigma)(X^{t,x,P_\xi}_r,P_{X^{t,\xi}_r},
\widetilde{X}^{t,\widetilde{\xi}}_r)\cdot\widetilde{U}^{t,
\widetilde{\xi}}_r(y)\cdot\widetilde{U}^{t,
\widetilde{\xi}}_r(z)]dB_r,\ s\in[0,T],\end{array}\ee

\noindent combined with the SDE for $U^{t,\xi}(y,z)=(U^{t,\xi}_s
(y,z))_{s\in[t,T]}$, obtained by substituting $x=\xi$ in the
equation for $U^{t,x,P_\xi}(y,z)$ (recall namely that $X^{t,\xi}=
X^{t,x,P_\xi}\big|_{x=\xi},\, U^{t,\xi}=U^{t,x,P_\xi}\big|_{x=\xi}$).
We consider now the processes $\widehat{E}[U^{t,x,P_\xi}(y,
\widehat{\xi})\cdot\widehat{\eta}]$ and $\widehat{E}[U^{t,\xi}(y,
\widehat{\xi})\cdot\widehat{\eta}]$. Substituting first $z=
\widehat{\xi}$ in the SDE for $U^{t,x,P_\xi}(y,z)$ and that
for $U^{t,\xi}(y,z)$, then multiplying the both sides of these SDEs
with $\widehat{\eta}$ and taking the expectation $\widehat{E}[.]$,
we get just the SDEs solved by $\partial_\xi[U^{t,x,P_\xi}(y)](\eta)$
and $\partial_\xi[U^{t,\xi}_s(y)](\eta)$ (See also the proof of the preceding
Theorem 3.1 for the corresponding proof for the first order derivatives),
and from the uniqueness of the solution of these SDEs we conclude that
\be\label{3.75}\begin{array}{lll}
&\partial_\xi[U^{t,x,P_\xi}_s(y)](\eta)=\widehat{E}[U^{t,x,P_\xi}_s(y,
\widehat{\xi})\cdot\widehat{\eta}],\ \ \ \ \  \mbox{and}\\
&\partial_\xi[U^{t,\xi}_s(y)](\eta)=\widehat{E}[U^{t,\xi}_s(y,
\widehat{\xi})\cdot\widehat{\eta}],\ s\in[t,T],\ y\in {\mathbb R}.\end{array}\ee
We also observe that the SDEs for $U^{t,x,P_\xi}(y, z)$ and
$U^{t,\xi}(y,z)$ allow to make estimates. In particular, we see that,
for all $p\ge 2,$ there is some constant $C_p\in {\mathbb R}$ such that, for all
$t\in [0,T], x,\ x',\ y,\ y',\ z,\ z'\in {\mathbb R}$ and $\xi,\ \xi'\in L^2({\cal F}_t)$,
\be\label{3.77}\begin{array}{lll}
&{\rm iii)}\ E[\sup_{s\in[t,T]}|U_s^{t,x,P_\xi}(y,z)|^p]\le C_p,\\
&{\rm iv)}\ E[\sup_{s\in[t,T]}|U_s^{t,x,P_\xi}(y,z)-U_s^{t,x',P_{\xi'}}(y',z')|^p]\\ &\ \ \ \ \ \ \le C_p(|x-x'|^p+|y-y'|^p+|z-z'|^p+W_2(P_\xi,P_{\xi'})^p).\end{array}\ee

\smallskip

Estimate (\ref{3.77})-iv) allows to show in analogy to the argument of the second part of the proof of Theorem 3.1 that the mappings 
$L^2({\cal F}_t)\ni\xi\mapsto U^{t,x,P_\xi}_s(y)\in L^2({\cal F}_s)$ and $L^2({\cal F}_t)\ni\xi\mapsto U^{t,\xi}_s(y)\in L^2({\cal F}_s)$\ 
are even Frechet and 
$$D_\xi[\partial_\mu X_s^{t,x,P_\xi}(y)](\eta)=D_\xi[U_s^{t,x,P_\xi}(y)](\eta)=\widehat{E}[U_s^{t,x,P_\xi}(y,\widehat{\xi})\widehat{\eta}],$$
$$D_\xi[\partial_\mu X_s^{t,\xi}(y)](\eta)=D_\xi[U_s^{t,\xi}(y)](\eta)=\partial_x U_s^{t,x,\xi}(y)|_{x=\xi}\cdot \eta+\widehat{E}[U_s^{t,\xi}(y,\widehat{\xi})\widehat{\eta}].$$

\noindent But this means that
\be\label{3.76}\partial_\mu^2 X_s^{t,x,P_\xi}(y,z)=U^{t,x,
P_\xi}_s(y,z),\ s\in[0,T],\ee

\noindent for all $t\in[0,T],\ x,\ y,\ z\in {\mathbb R},\ \xi\in L^2({\cal F}_t).$

The arguments developed above allow also to obtain the SDEs satisfied
by the other second order derivatives ($\partial_x^2 X^{t,x,P_\xi}$\ and $\partial_y(\partial_\mu X^{t,x,P_\xi}(y))$) of the process $X^{t,x,P_\xi}$
and to prove the stated estimates. The proof is complete.\end{proof}

\section{Regularity of the value function}

Given a function $\Phi\in C^{2,1}_b({\mathbb R}^d\times{\cal P}_2({\mathbb R}^d))$, the
objective of this section is to study the regularity of the function
$V:[0,T]\times {\mathbb R}^d\times {\cal P}_2({\mathbb R}^d)\rightarrow {\mathbb R},$
\be\label{4.1}V(t,x,P_\xi)=E[\Phi(X^{t,x,P_\xi}_T,P_{X_T^{t,\xi}})],\ (t,x,\xi)\in [0,T]\times {\mathbb R}^d\times L^2({\cal F}_t;{\mathbb R}^d).\ee
\begin{lemma} Suppose that $\Phi\in C^{1,1}_b({\mathbb R}^d
\times{\cal P}_2({\mathbb R}^d))$. Then, under our Hypothesis (H.1), 
$V(t,.,.)$ $\in C^{1,1}({\mathbb R}^d\times{\cal P}_2({\mathbb R}^d))$, for all
$t\in[0,T]$, and the derivatives $\partial_x V(t,x,P_\xi)=
(\partial_{x_{i}} V(t,x,P_\xi,y))_{1\le i\le d}$ and
$\partial_\mu V(t,x,P_\xi,y)$ $= ((\partial_\mu V)_{i}(t,x,
P_\xi,y))_{1\le i\le d}$ are of the form
\be\label{4.2}\partial_{x_{i}}V(t,x,P_\xi)=\sum_{j=1}^d
E[(\partial_{x_j}\Phi)(X^{t,x,P_\xi}_T,P_{X^{t,\xi}_T})
\cdot \partial_{x_{i}}X^{t,x,P_\xi}_{T,j}],\ee
\be\label{4.3}\begin{array}{lll}
&(\partial_\mu V)_{i}(t,x,P_\xi,y)=\sum_{j=1}^d
E\[(\partial_{x_j}\Phi)(X^{t,x,P_\xi}_T,P_{X^{t,\xi}_T})
(\partial_\mu X^{t,x,P_\xi}_{T,j})_{i}(y)\\
&\ \ +
\widetilde{E}[(\partial_\mu\Phi)_j(X^{t,x,P_\xi}_T,
P_{X^{t,\xi}_T},\widetilde{X}_T^{t,y,P_\xi})\cdot\partial_{x_{i}}
\widetilde{X}^{t,y,P_\xi}_{T,j}+(\partial_\mu\Phi)_j(X^{t,x,
P_\xi}_T,P_{X^{t,\xi}_T},\widetilde{X}_T^{t,\widetilde{\xi}})
\cdot(\partial_\mu \widetilde{X}^{t,\widetilde{\xi},
P_\xi}_{T,j})_{i}(y)]\].\end{array}\ee

\noindent Moreover, there is some constant $C\in {\mathbb R}$ such that, for all
$t,\ t'\in[0,T],\ x,\ x',\ y,\ y'\in {\mathbb R},$ and $\xi,\ \xi'\in L^2({\cal F}_t),$
\be\label{4.4}\begin{array}{lll}
&{\rm i)}\ |\partial_{x_{i}}V(t,x,P_\xi)|+
|(\partial_\mu V)_{i}(t,x,P_\xi,y)|\le C,\\
&{\rm ii)}\ |\partial_{x_{i}}V(t,x,P_\xi)-\partial_{x_{i}}V(t,x',P_{\xi'})|+
|(\partial_\mu V)_{i}(t,x,P_\xi,y)-(\partial_\mu V)_{i}(t,x',P_{\xi'},y')|\\
&\ \ \ \ \ \le C(|x-x'|+|y-y'|+W_2(P_\xi,P_{\xi'})),\\
&{\rm iii)}\  |V(t,x,P_\xi)-V(t',x,P_\xi)|+|\partial_{x_{i}}V(t,x,P_\xi)
-\partial_{x_{i}}V(t',x,P_{\xi})|\\
&\ \ \ \ \ +|(\partial_\mu V)_{i}(t,x,P_\xi,y)-(\partial_\mu V)_{i}(t',x,P_{\xi},y)|
\le C|t-t'|^{1/2}.\end{array}\ee
\end{lemma}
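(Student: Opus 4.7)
The plan is to reduce the lemma to applications of the Fréchet differentiability of $X^{t,x,P_\xi}_T$ and $X^{t,\xi}_T$ established in Theorem 3.1 together with the chain rule for functions in $C^{1,1}_b$. Throughout I will work with the lifted function $\widetilde V(t,x,\xi):=V(t,x,P_\xi)=E[\Phi(X^{t,x,P_\xi}_T,P_{X^{t,\xi}_T})]$ on $[0,T]\times{\mathbb R}^d\times L^2({\cal F}_t;{\mathbb R}^d)$, and show that it is Fréchet differentiable in $\xi$ with a derivative that factors through $P_\xi$; this then forces a well-defined derivative $\partial_\mu V(t,x,P_\xi,\cdot)$ in the sense of Definition 2.1.

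For the $x$-derivative I would first recall that, under (H.1), $x\mapsto X^{t,x,P_\xi}_T$ is $L^2$-differentiable (uniformly in $x$), with derivative $\partial_x X^{t,x,P_\xi}_T$ obeying the linear SDE stated in the proof of Theorem 3.1, and bounded estimates from Lemma 3.2. Writing $\Phi(X^{t,x+h,P_\xi}_T,P_{X^{t,\xi}_T})-\Phi(X^{t,x,P_\xi}_T,P_{X^{t,\xi}_T})$ as $\int_0^1(\partial_x\Phi)(X^{t,x,P_\xi}_T+\lambda(X^{t,x+h,P_\xi}_T-X^{t,x,P_\xi}_T),P_{X^{t,\xi}_T})\,d\lambda\cdot(X^{t,x+h,P_\xi}_T-X^{t,x,P_\xi}_T)$, dominated convergence gives (4.2). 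For the $\mu$-derivative I would perturb $\xi\mapsto\xi+\eta$ and split
\[
\Phi(X^{t,x,P_{\xi+\eta}}_T,P_{X^{t,\xi+\eta}_T})-\Phi(X^{t,x,P_{\xi}}_T,P_{X^{t,\xi}_T})=A(\eta)+B(\eta),
\]
with $A(\eta):=\Phi(X^{t,x,P_{\xi+\eta}}_T,P_{X^{t,\xi+\eta}_T})-\Phi(X^{t,x,P_{\xi}}_T,P_{X^{t,\xi+\eta}_T})$ treated by the classical Taylor expansion in the space variable (applying Theorem 3.1 to $X^{t,x,P_\xi}_T$), and $B(\eta):=\Phi(X^{t,x,P_{\xi}}_T,P_{X^{t,\xi+\eta}_T})-\Phi(X^{t,x,P_{\xi}}_T,P_{X^{t,\xi}_T})$ treated by the first-order expansion of Lemma 2.1 (restricted to the first order since $\Phi\in C^{1,1}_b$). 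Writing $B(\eta)=E[\widetilde E[\partial_\mu\Phi(X^{t,x,P_\xi}_T,P_{X^{t,\xi}_T},\widetilde X^{t,\xi}_T)\cdot(\widetilde X^{t,\xi+\eta}_T-\widetilde X^{t,\xi}_T)]]+o(|\eta|_{L^2})$ and then invoking Theorem 3.1 for $\widetilde X^{t,\xi}$, i.e.\ $\widetilde X^{t,\xi+\eta}_T-\widetilde X^{t,\xi}_T=\widehat E[\partial_\mu\widetilde X^{t,\widetilde\xi,P_\xi}_T(\widehat\xi)\cdot\widehat\eta]+o(|\eta|_{L^2})$, produces the two terms in (4.3) after taking expectations and using that expectations over independent copies can be interchanged.

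Once the Fréchet derivatives are identified, I would read off from the explicit formulas (4.2)--(4.3) the boundedness estimate (i) and the Lipschitz estimate (ii): every ingredient -- $\partial_x\Phi$, $\partial_\mu\Phi$, $\partial_x X^{t,x,P_\xi}_T$, $\partial_\mu X^{t,x,P_\xi}_T(y)$ -- is bounded and Lipschitz in $(x,y,P_\xi)$ by Hypothesis (H.1) (for $\Phi$) and by Lemmas 3.1 and 3.2 (for the $X$-derivatives, together with the SDE characterization giving the needed Lipschitz dependence in $(x,y,P_\xi)$). The only point needing attention is that the Lipschitz dependence on $P_\xi$ of the derivative processes must be used both in its inner instance (through the lifted $\widetilde X^{t,y,P_\xi}_T$, $\widetilde X^{t,\widetilde\xi}_T$) and in its outer instance.

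For the time regularity (iii), I would use the flow property (3.5): for $t\le t'$, $X^{t,x,P_\xi}_T=X^{t',X^{t,x,P_\xi}_{t'},P_{X^{t,\xi}_{t'}}}_T$, together with the SDE estimate $E[|X^{t,x,P_\xi}_{t'}-x|^2]+W_2(P_{X^{t,\xi}_{t'}},P_\xi)^2\le C(t'-t)$ coming from boundedness of $\sigma,b$ and It\^o's isometry. Combined with the Lipschitz estimate of Lemma 3.1 (in $p=2$), this yields $E[|X^{t,x,P_\xi}_T-X^{t',x,P_\xi}_T|^2]\le C(t'-t)$ and, analogously, $W_2(P_{X^{t,\xi}_T},P_{X^{t',\xi}_T})^2\le C(t'-t)$. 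Substituting into the formulas for $V$, $\partial_x V$, $\partial_\mu V$ and using the Lipschitz and boundedness properties of $\Phi$ and of its derivatives (and of the derivative processes $\partial_x X^{t,x,P_\xi}_T$ and $\partial_\mu X^{t,x,P_\xi}_T(y)$), gives the $|t-t'|^{1/2}$ estimate. The main technical obstacle will be keeping track of the two distinct dependencies on $\xi$ -- one through the drift/diffusion measure argument in $X^{t,x,P_\xi}$, the other through the outer argument $P_{X^{t,\xi}_T}$ of $\Phi$ -- and cleanly combining them via independent copies $(\widetilde\xi,\widetilde B)$ and $(\widehat\xi,\widehat B)$; this is precisely why the formula for $(\partial_\mu V)_i$ in (4.3) contains both a term in $\partial_\mu X^{t,x,P_\xi}$ and a term in $\partial_\mu\Phi$.
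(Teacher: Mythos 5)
Your route for the differentiability in $x$ and in the measure is essentially the paper's: lift $V$ to $\widetilde V(t,x,\xi)=E[\Phi(X^{t,x,P_\xi}_T,P_{X^{t,\xi}_T})]$, differentiate under the expectation using the Fr\'echet differentiability from Theorem 3.1, and read off (4.2)--(4.3) and the estimates i), ii) from Lemma 3.2 and the boundedness/Lipschitz continuity of $\partial_x\Phi,\partial_\mu\Phi$. One slip, however, sits exactly where the middle term of (4.3) is produced: you quote ``Theorem 3.1 for $\widetilde X^{t,\xi}$'' as $\widetilde X^{t,\xi+\eta}_T-\widetilde X^{t,\xi}_T=\widehat E[\partial_\mu\widetilde X^{t,\widetilde\xi,P_\xi}_T(\widehat\xi)\cdot\widehat\eta]+o(|\eta|_{L^2})$. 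Theorem 3.1 concerns $\xi\mapsto X^{t,x,\xi}_T$ with $x$ frozen; for $X^{t,\xi}_T=X^{t,x,P_\xi}_T\big|_{x=\xi}$ the derivative is $Z^{t,\xi}_T(\eta)=\partial_x X^{t,\xi,P_\xi}_T\cdot\eta+\widehat E[\partial_\mu X^{t,\xi,P_\xi}_T(\widehat\xi)\cdot\widehat\eta]$ (the paper's (3.30)--(3.31), (3.49)). Without the extra term $\partial_x\widetilde X^{t,\widetilde\xi,P_\xi}_T\cdot\widetilde\eta$ your expansion of $B(\eta)$ yields only the third term of (4.3) and not the one containing $\partial_{x_i}\widetilde X^{t,y,P_\xi}_{T,j}$; since you nevertheless announce ``the two terms,'' this reads as a typo, but as written the derivation of (4.3) is incomplete and should be corrected.

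The genuine gap is in iii). Your flow-property argument, $X^{t,x,P_\xi}_T=X^{t',X^{t,x,P_\xi}_{t'},P_{X^{t,\xi}_{t'}}}_T$ combined with $E[|X^{t,x,P_\xi}_{t'}-x|^2]+W_2(P_{X^{t,\xi}_{t'}},P_\xi)^2\le C(t'-t)$ and Lemma 3.1, does give $|V(t,x,P_\xi)-V(t',x,P_\xi)|\le C|t-t'|^{1/2}$. But for $\partial_x V$ and $\partial_\mu V$ the formulas (4.2)--(4.3) involve the derivative processes $\partial_x X^{t,\cdot,P_\xi}_T$ and $\partial_\mu X^{t,\cdot,P_\xi}_T(y)$ started at the two different initial times, and ``substituting into the formulas'' does not reduce their comparison to the flow property of $X$ alone: one would have to differentiate the flow identity (a chain rule in $x$, and a considerably messier one for $\partial_\mu$), or prove time-increment/initial-time-stability estimates for the SDEs (3.54) and (3.56) -- none of which is indicated. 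The paper avoids this by a different device: it picks $\xi'\in L^2({\cal F}_0)$ with $P_{\xi'}=P_\xi$ independent of $B$, uses the shifted Brownian motion $B^t$ to identify the law of the whole system $N^{t,x,P_{\xi'}}(y)=(X^{t,x,P_{\xi'}},\partial_x X^{t,x,P_{\xi'}},U^{t,x,P_{\xi'}}(y))$ on $[t,T]$ with that of $N^{0,x,P_{\xi'}}(y)$ on $[0,T-t]$, so that iii) becomes a path-increment estimate at times $T-t$ versus $T-t'$, obtained directly from the SDEs with bounded coefficients (see (4.12)--(4.20)). To complete your proof of iii) for the derivatives you should either adopt this time-shift/equality-in-law argument or supply the missing stability estimates for $\partial_x X$ and $\partial_\mu X$ with respect to the initial time.
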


\begin{proof} In order to simplify the presentation, we consider again the case of dimension $d=1$, but without restricting
the generality of the argument we use.

In accordance with the notations introduced in Section 2, we put $\widetilde{V}(t,x,\xi):=V(t,x,P_\xi)$,
and $\widetilde{\Phi}(z,\vartheta):=
\Phi(z,P_\vartheta),\, (z,\vartheta)\in {\mathbb R}\times L^2({\cal F}).$
Recall also that, in the same sense, $X^{t,x,\xi}=X^{t,x,P_\xi}$.
Then $\widetilde{\Phi}(X^{t,x,\xi}_T,X^{t,\xi}_T)=
\Phi(X^{t,x,P_\xi}_T,P_{X^{t,\xi}_T}).$

As $\Phi\in C_b^{1,1}({\mathbb R}\times{\cal P}_2({\mathbb R}))$, its first order
derivatives are bounded and Lipschitz continuous. Thus, standard
arguments combined with the results from the preceding section
show the existence of the Fr\'{e}chet derivative
$D_\xi\big(\widetilde{\Phi}(X^{t,x,\xi}_T,X^{t,\xi}_T)\big)$
of $L^2({\cal F}_t)\ni\xi\rightarrow
\widetilde{\Phi}(X^{t,x,\xi}_T,X^{t,\xi}_T)\in L^2({\cal F}_T),$
and for all $\eta\in L^2({\cal F}_t)$ we have
\be\label{4.5}\begin{array}{lll}
& D_\xi\left(\widetilde{\Phi}(X^{t,x,\xi}_T,X^{t,
\xi}_T)\right)(\eta)\\
\ =&\partial_x\widetilde{\Phi}(X^{t,x,\xi}_T,
X^{t,\xi}_T)D_\xi X^{t,x,\xi}_T(\eta)+(D\widetilde{\Phi})(X^{t,x,\xi}_T,
X^{t,\xi}_T)(D_\xi[X^{t,\xi}_T](\eta))\\
\ =&\partial_x\Phi(X^{t,x,P_\xi}_T,
P_{X^{t,\xi}_T})\widehat{E}[\partial_\mu X^{t,x,P_\xi}_T
(\widehat{\xi})\cdot\widehat{\eta}]
+\widetilde{E}[\partial_\mu\Phi(X^{t,x,P_\xi}_T,
P_{X^{t,\xi}_T},\widetilde{X}^{t,\widetilde{\xi}}_T)
D_{\widetilde{\xi}}[\widetilde{X}^{t,\widetilde{\xi}}_T(
\widetilde{\eta})]]\\
\ =&\partial_x\Phi(X^{t,x,P_\xi}_T,
P_{X^{t,\xi}_T})\widehat{E}[\partial_\mu X^{t,x,P_\xi}_T
(\widehat{\xi})\cdot\widehat{\eta}]\\
&\ +\widetilde{E}[\partial_\mu\Phi(X^{t,x,P_\xi}_T,
P_{X^{t,\xi}_T},\widetilde{X}^{t,\widetilde{\xi}}_T)
\cdot(\partial_x\widetilde{X}^{t,\widetilde{\xi},P_\xi}_T
\cdot\widetilde{\eta}+\widehat{E}[\partial_\mu
\widetilde{X}^{t,\widetilde{\xi}}_T(\widehat{\xi})\cdot
\widehat{\eta}])].\end{array}\ee

\smallskip

\noindent (For the notations used here the reader is referred
to the previous section). With the argument developed in the
preceding section we conclude that the derivative of
$\Phi(X^{t,x,\xi}_T,P_{X^{t,\xi}_T})$ with respect to the
measure in $P_\xi$ is given by
\be\label{4.6}\begin{array}{lll}
&\partial_\mu \big(\Phi(X^{t,x,P_\xi}_T,P_{X^{t,\xi}_T})
\big)(y)=\partial_x\Phi(X^{t,x,P_\xi}_T,P_{X^{t,\xi}_T})
\partial_\mu X^{t,x,P_\xi}_T(y)\\
&\ \ \  +\widetilde{E}\bigg[\partial_\mu
\Phi(X^{t,x,P_\xi}_T,P_{X^{t,\xi}_T},
\widetilde{X}_T^{t,y,P_\xi})\cdot\partial_x
\widetilde{X}^{t,y,P_\xi}_T+\partial_\mu\Phi(X^{t,x,
P_\xi}_T,P_{X^{t,\xi}_T},\widetilde X_T^{t,
\widetilde{\xi}})\cdot\partial_\mu
\widetilde{X}^{t,\widetilde{\xi},P_\xi}_T(y)
\bigg],\ y\in R.\end{array}\ee

\smallskip

\noindent In particular, we can deduce from this latter formula and the
estimates from the preceding section that, for all $p\ge 2$, there is
a constant $C_p\in {\mathbb R}$ such that, for all $x,\ x',\ y,\ y'\in {\mathbb R}$,
$\xi,\ \xi'\in L^2({\cal F}_t),$
\be\label{4.7}\begin{array}{lll}
& E\left[|\partial_\mu\big(\Phi(X^{t,x,P_\xi}_T,P_{X^{t ,
\xi}_T})\big)(y)|^p\right]\le C_p,\\
& E\left[|\partial_\mu\big(\Phi(X^{t,x,P_\xi}_T,P_{X^{t ,\xi}_T})\big)(y)-\partial_\mu\big(\Phi(X^{t,x',P_{\xi'}}_T,P_{X^{t ,
\xi'}_T})\big)(y')|^p\right]\\
&\ \ \ \ \ \le C_p(|x-x'|^p+|y-y'|^p+W_2(P_\xi,P_{\xi'})^p).\end{array}\ee

As the expectation $E[.]: L^2({\cal F})\rightarrow {\mathbb R}$\ is a bounded linear operator, it follows from (\ref{4.5}) and (\ref{4.6}) that 
$L^2({\cal F}_t\ni \xi\mapsto \widetilde{V}(t,x,\xi):=V(t,x,P_\xi)=E[\Phi(X^{t,x,P_\xi}_T,P_{X^{t,\xi}_T})]$ is Fr\'{e}chet differentiable and, for all $\eta\in L^2({\cal F}_t)$,
\be\label{4.8}\begin{array}{lll}
&D_\xi[\widetilde{V}(t,x,\xi)](\eta)=E\big[D_\xi\big(
\widetilde{\Phi}(X^{t,x,\xi}_T,X^{t,\xi}_T)\big)(\eta)]=E[\widetilde{E}[\partial_\mu
(\Phi(X^{t,x,P_\xi}_T,P_{X^{t,\xi}_T}))
(\widetilde{\xi})\cdot\widetilde{\eta}]]\\
&=\widetilde{E}[E[\partial_\mu (\Phi(X^{t,x,P_\xi}_T, P_{X^{t,\xi}_T}))(\widetilde{\xi})]\cdot\widetilde{\eta}],\end{array}\ee
i.e.,

\be\label{4.11} \partial_\mu {V}(t,x,P_\xi, y)=E[\partial_\mu (\Phi(X^{t,x,P_\xi}_T, P_{X^{t,\xi}_T}))(y)], \ y\in {\mathbb R}.\ee

\noindent But then from (\ref{4.7}) we obtain (\ref{4.4}) for $\partial_\mu {V}(t,x,P_\xi, y)$.

\noindent As concerns the derivative of $V(t,x,P_\xi)
=E[\Phi(X^{t,x,P_\xi}_T,P_{X^{t,\xi}_T})$ with respect to $x$,
since $z\mapsto\Phi(z,P_{X^{t,\xi}_T})$, is
a (deterministic) function with a bounded, Lipschitz continuous
derivative of first order, the computation of $\partial_xV(t,x,P_\xi)$ is
is standard.

Finally, concerning the estimates i) and ii) for the derivative $\partial_xV$
stated in Lemma 4.1, they are a direct consequence of the assumption
on $\Phi$ as well as the estimates for the involved processes,
studied in the preceding section.

In order to complete the proof, it remains still to prove iii). For
this end we observe, that due to Lemma 3.1, for arbitrarily given
$(t,x)\in [0,T]\times {\mathbb R}$ and $\xi\in L^2({\cal F}_t)$, $X^{t,x,P_\xi}=
X^{t,x,P_{\xi'}}$, for all $\xi'\in  L^2({\cal F}_t)$ with $P_{\xi'}=
P_\xi$. Since due to our assumption $L^2({\cal F}_0)$ is rich enough,
we can find some $\xi'\in  L^2({\cal F}_0)$ with $P_{\xi'}=
P_\xi$, which is independent of the driving Brownian
motion $B$. Using the time-shifted Brownian motion $B^t_s:=
B_{t+s}-B_t,\ s\ge 0$ (where we consider the Brownian motion $B$
extended beyond the time horizon $T$), we see that $X^{t,x,P_{\xi'}}$
and $X^{t,\xi'}$ solve the following SDEs:
\be\label{4.12} X^{t,\xi'}_{s+t}=\xi'+\int_0^s\sigma(
X^{t,\xi'}_{r+t},P_ {X^{t,\xi'}_{r+t}})dB^t_r,\ee
\be\label{4.13}X^{t,x,P_{\xi'}}_{s+t}=x+\int_0^s\sigma(
X^{t,x,P_{\xi'}}_{r+t},P_ {X^{t,\xi'}_{r+t}})dB^t_r,\ s\in[0,T-t].\ee

\smallskip

\noindent Consequently, $(X^{t,x,P_{\xi'}}_{.+t},X^{t,\xi'}_{.+t})$
and $(X^{0,x,P_{\xi'}},X^{0,\xi'})$ are solutions of the same system
of SDEs, only driven by different Brownian motions, $B^t$ and $B$,
respectively, both independent of $\xi'$. It follows that the laws
of $(X^{t,x,P_{\xi'}}_{.+t},X^{t,\xi'}_{.+t})$ and $(X^{0,x,P_{\xi'}},
X^{0,\xi'})$ coincide, and, hence,
\be\label{4.14}V(t,x,P_\xi)=V(t,x,P_{\xi'})=E[\Phi(X^{t,x,P_{\xi'}
}_{T},P_{X^{t,\xi'}_{T}})]=E[\Phi(X^{0,x,P_{\xi'}
}_{T-t},P_{X^{0,\xi'}_{T-t}})].\ee

\noindent Thus, for two different initial times $t,\ t'\in[0,T],$
using the fact that the derivatives of $\Phi$ are bounded, i.e., $\Phi$
is Lipschitz over ${\mathbb R}\times {\cal P}_2({\mathbb R})$, we obtain
\be\label{4.15}\begin{array}{lll}
& |V(t,x,P_\xi)-V(t',x,P_\xi)|\\
&\le E[|\Phi(X^{0,x,P_{\xi'}
}_{T-t},P_{X^{0,\xi'}_{T-t}})-\Phi(X^{0,x,P_{\xi'}
}_{T-t'},P_{X^{0,\xi'}_{T-t'}})|]\\
&\le C (E[|X^{0,x,P_{\xi'}}_{T-t}-X^{0,x,P_{\xi'}}_{T-t'}|]+W_2
 (P_{X^{0,\xi'}_{T-t}},P_{X^{0,\xi'}_{T-t'}}))\\
&\le C(E[|X^{0,x,P_{\xi'}
}_{T-t}-X^{0,x,P_{\xi'}}_{T-t'}|^2+
|X^{0,{\xi'}}_{T-t}-X^{0,{\xi'}}_{T-t'}|^2])^{1/2}.\end{array}\ee

\noindent But, taking into account the boundedness of the
coefficient $\sigma$ of the SDEs for $X^{0,x,P_{\xi'}
}$ and $X^{0,{\xi'}}$, we get
\be\label{4.16} |V(t,x,P_\xi)-V(t',x,P_\xi)|
\le C|t-t'|^{1/2}.\ee

\noindent The proof of the remaining estimate for the derivatives
$V$ is carried out by using the same kind of argument.
Indeed, considering the system of equations for $N^{t,x,P_{\xi'}}(y):=(X^{t,x,P_{\xi'}},\partial_x
X^{t,x,P_{\xi'}},U^{t,x,P_{\xi'}}(y)),$ $\ x,\ y\in {\mathbb R},$ and
$N^{t,\xi'}(y):=(X^{t,\xi'}, \partial_x
X^{t,\xi'}, U^{t,\xi'}(y)),\ y\in {\mathbb R},$
(see (\ref{3.2}), (\ref{3.63}), (\ref{3.42})) we see again that

$((N^{t,x,P_{\xi'}}_{.+t}(y))_{x,y\in {\mathbb R}},(N^{t,\xi'}_{.+t}
(y)_{y\in {\mathbb R}}))$
and $((N^{0,x,P_{\xi'}}(y))_{x,y\in {\mathbb R}},(N^{0,\xi'}
(y)_{y\in {\mathbb R}}))$ are equal in law.

\noindent Hence, from (\ref{4.11}) we deduce 
\be\label{4.17}\begin{array}{lll}
&\partial_\mu V(t,x,P_\xi,y)
=E\bigg[\partial_\mu(\Phi(X^{t,x,P_{\xi'}}_T,
P_{X^{t,\xi'}_T}))(y)\bigg]=E\bigg[\partial_\mu(\Phi(X^{0,x,P_{\xi'}}_{T-t},
P_{X^{0,\xi'}_{T-t}}))(y)\bigg].\end{array}\ee

\smallskip

\noindent Consequently, using the Lipschitz continuity and
the boundedness of $\partial_x\Phi:{\mathbb R}\times{\cal P}_2({\mathbb R})
\rightarrow {\mathbb R}$ and $\partial_\mu\Phi:{\mathbb R}\times{\cal P}_2({\mathbb R})
\times {\mathbb R}\rightarrow {\mathbb R}$ as well as the uniform boundedness
in $L^p$ ($p\ge 2$) of the first order derivatives
$\partial_x X^{0,x,P_{\xi'}}, \partial_\mu X^{0,x,P_{\xi'}}(y)$,
we get from (\ref{4.6}) with $(0, x, \xi', T-t)$ and $(0, x, \xi', T-t')$ instead of $(t, x, \xi, T)$, that
\be\label{4.18}\begin{array}{lll}
&|\partial_\mu V(t,x,P_\xi,y)-\partial_\mu
V(t',x,P_\xi,y)|\\
&\ \le C\bigg(E\big[|X^{0,x,P_{\xi'}}_{T-t}-
X^{0,x,P_{\xi'}}_{T-t'}|+|X^{0,y,P_{\xi'}}_{T-t}-
X^{0,y,P_{\xi'}}_{T-t'}|+|X^{0,{\xi'}}_{T-t}-
X^{0,{\xi'}}_{T-t'}| +|\partial_x
X^{0,y,P_{\xi'}}_{T-t}-\partial_x
X^{0,y,P_{\xi'}}_{T-t'}|\\
&\ \ \ \  \ +|\partial_\mu
X^{0,x,P_{\xi'}}_{T-t}(y)-\partial_\mu
X^{0,x,P_{\xi'}}_{T-t'}(y)|+|\partial_\mu
X^{0,\xi',P_{\xi'}}_{T-t}(y)-\partial_\mu
X^{0,\xi',P_{\xi'}}_{T-t'}(y)|\big]
+W_2(P_{X^{0,\xi'}_{T-t}},
P_{X^{0,\xi'}_{T-t'}})\bigg).\end{array}\ee

\noindent Thus, since $\xi'$ is independent of
$X^{0,x,P_{\xi'}},\ \partial_x X^{0,y,P_{\xi'}}$
and $\partial_\mu X^{0,x',P_{\xi'}}(y)$,
\be\label{4.19}\begin{array}{lll}
&|\partial_\mu V(t,x,P_\xi,y)-
\partial_\mu V(t',x,P_\xi,y)|\\
& \le C\cdot\sup_{x, y\in {\mathbb R}}
\left(E\big[|X^{0,x,P_{\xi'}}_{T-t}-
X^{0,x,P_{\xi'}}_{T-t'}|^2+|\partial_x
X^{0,x,P_{\xi'}}_{T-t}-\partial_x
X^{0,x,P_{\xi'}}_{T-t'}(y)|^2\right.\\
&\ \ \ \  \left.+|\partial_\mu
X^{0,x,P_{\xi'}}_{T-t}(y)-\partial_\mu
X^{0,x,P_{\xi'}}_{T-t'}(y)|^2\big]\right)^{1/2},\end{array}\ee

\noindent and the uniform boundedness in $L^2$ of the
derivatives of $X^{0,x,P_{\xi'}}$ allows to deduce from
the SDEs for $X^{0,x,P_{\xi'}},\partial_x
X^{0,x,P_{\xi'}}$ and $\partial_\mu
X^{0,x,P_{\xi'}}_{T-t'}(y)$ ((\ref{3.2}), (\ref{3.54}), (\ref{3.56})) that
\be\label{4.20} |\partial_\mu V(t,x,P_\xi,y)-
\partial_\mu V(t',x,P_\xi,y)|\le C|t-t'|^{1/2}.\ee

\noindent The proof of the corresponding estimate for
$\partial_x V(t,x,,P_\xi)$ is similar and, hence, omitted
here.
\end{proof}

Let us come now to the discussion of the second order
derivatives of our value function $V(t,x,P_\xi)$.
\begin{lemma} We suppose that Hypothesis (H.2) is
satisfied by the coefficients $\sigma$ and $b$, and we suppose
that $\Phi\in C^{2,1}({\mathbb R}^d\times{\cal P}_2({\mathbb R}^d)).$ Then, for all
$t\in [0,T],$ $V(t,.,.)\in C^{2,1}({\mathbb R}^d\times{\cal P}_2({\mathbb R}^d))$, the mixed second order derivatives are symmetric:
$$\partial_{x_i}(\partial_\mu V(t,x,P_\xi,y))=\partial_\mu(\partial_{x_i}V(t,x,P_\xi))(y),\ (t, x,y)\in [0, T]\times {\mathbb R}^d\times {\mathbb R}^d,\ \xi\in L^2({\cal F}_t, {\mathbb R}^d),\ 1\leq i\leq d,$$
and, for $$U(t,x,P_\xi,y,z)=(\partial_{x_i x_j}^2V(t,x,P_\xi),
\partial_{x_i}(\partial_\mu V(t,x,P_\xi,y)), \partial_\mu^2V(t,x,P_\xi,y,z), 
\partial_y(\partial_\mu V(t,x,P_\xi,y))),$$
there is some constant $C\in {\mathbb R}$ such that, for all
$t,\ t'\in[0,T],\ x,\ x',\ y,\ y'\in {\mathbb R},$ $\xi,\ \xi'\in L^2({\cal F}_t, {\mathbb R}^d),$
\be\label{4.21}\begin{array}{lll}
{\rm i)}\ |U(t,x,P_\xi,y,z)|\le C,\\
{\rm ii)}\ |U(t,x,P_\xi,y,z)-U(t,x',P_{\xi'},y',z')|
\le(|x-x'|+|y-y'|+W_2(P_\xi,P_{\xi'})),\\
{\rm iii)}\ |U(t,x,P_\xi,y,z)-U(t',x,P_\xi,y,z)| \le C|t-t'|^{1/2}.\end{array}\ee
\end{lemma}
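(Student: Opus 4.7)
The plan is to differentiate once more the explicit representations (\ref{4.2}) and (\ref{4.3}) of $\partial_{x_i} V$ and $(\partial_\mu V)_i$ obtained in the preceding lemma, treating the $x$-variable by the classical chain rule and the law variable $P_\xi$ by the Wasserstein chain rule and the lifting procedure of Section 2. Each of the four objects assembled in $U(t,x,P_\xi,y,z)$ will then be written as an expectation of products of first- and second-order derivatives of $\Phi$ (evaluated at $(X^{t,x,P_\xi}_T,P_{X^{t,\xi}_T})$ and possibly at independent copies $\widetilde{X}_T^{t,y,P_\xi}$, $\widetilde{X}_T^{t,\widetilde{\xi}}$, $\overline{X}_T^{t,z,P_\xi}$, $\overline{X}_T^{t,\overline{\xi}}$) with first- and second-order derivatives of $X^{t,x,P_\xi}$ and $X^{t,\xi}$, whose uniform $L^p$-bounds and $L^p$-Lipschitz estimates are already supplied by Proposition 4.1. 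The shape of these expressions is parallel to the SDE (\ref{3.74}) for $U^{t,x,P_\xi}(y,z)$: a further differentiation of (\ref{4.3}) in $P_\xi$ produces new integrals against independent copies via the identity $D_\xi f(P_\xi)(\eta)=\widetilde{E}[\partial_\mu f(P_\xi,\widetilde{\xi})\cdot\widetilde{\eta}]$, while differentiation in $x$ or in $y$ is entirely classical.

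Once these formulas are in hand, estimate i) is immediate from the uniform bounds of Proposition 4.1 iii) combined with the uniform bounds on the first- and second-order derivatives of $\Phi$ coming from $\Phi\in C^{2,1}_b$; similarly, the Lipschitz estimate ii) follows from the joint Lipschitz continuity in $(x,y,\mu)$ of all derivatives of $\Phi$ up to order two, combined with the $L^p$-Lipschitz estimate iv) of Proposition 4.1, after routine applications of H\"older's inequality. The symmetry $\partial_{x_i}(\partial_\mu V) = \partial_\mu(\partial_{x_i}V)$ then follows from Lemma 3.3 applied to $V(t,\cdot,\cdot)$, which by the bounds just established belongs to $C^{2,1}_b({\mathbb R}^d\times{\cal P}_2({\mathbb R}^d))$ with Lipschitz mixed derivatives in the sense required by that lemma.

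For the time regularity iii), I would mimic the device used at the end of the proof of the preceding lemma: choose $\xi'\in L^2({\cal F}_0;{\mathbb R}^d)$ with $P_{\xi'}=P_\xi$ and independent of $B$, so that $V(t,x,P_\xi)=E[\Phi(X^{0,x,P_{\xi'}}_{T-t},P_{X^{0,\xi'}_{T-t}})]$ and analogous identities hold for each second-order derivative after expressing it through the enlarged family of processes $X^{0,x,P_{\xi'}}$, $\partial_x X^{0,x,P_{\xi'}}$, $\partial_\mu X^{0,x,P_{\xi'}}(y)$, $\partial^2_{xx}X^{0,x,P_{\xi'}}$, $\partial_\mu\partial_x X^{0,x,P_{\xi'}}(y)$, $\partial^2_\mu X^{0,x,P_{\xi'}}(y,z)$ and $\partial_y\partial_\mu X^{0,x,P_{\xi'}}(y)$. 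Since under (H.2) the coefficients of the SDEs satisfied by these processes are all bounded, each of them obeys the standard $L^2$-increment bound $E[|Y_s-Y_{s'}|^2]\le C|s-s'|$, which, together with the Lipschitz continuity of the derivatives of $\Phi$ up to order two, yields $|U(t,x,P_\xi,y,z)-U(t',x,P_\xi,y,z)|\le C|t-t'|^{1/2}$.

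The main obstacle will be the sheer bookkeeping complexity of the second differentiation of (\ref{4.3}): the law $P_\xi$ enters that formula at three distinct places (in the measure argument of $\partial_\mu\Phi$, inside $X^{t,x,P_\xi}_T$, and inside $\widetilde{X}_T^{t,y,P_\xi}$), and differentiating once more in $P_\xi$ or in $y$ generates a long list of terms of the same type as those appearing in the SDE (\ref{3.74}) for $U^{t,x,P_\xi}(y,z)$. Nevertheless, once these terms are enumerated, each factor is either a bounded Lipschitz derivative of $\Phi$ or one of the derivatives of $X^{t,x,P_\xi}$, $X^{t,\xi}$ controlled by Proposition 4.1, and so the bounds i)--iii) all reduce to the same estimates used in the proof of the preceding lemma.
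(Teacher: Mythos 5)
Your proposal follows essentially the same route as the paper's proof: differentiate the explicit first-order representations of $\partial_x V$ and $\partial_\mu V$ once more through the lifting/Fr\'echet formalism, write each entry of $U(t,x,P_\xi,y,z)$ as an expectation of products of (bounded, Lipschitz) derivatives of $\Phi$ with the first- and second-order derivatives of the flows $X^{t,x,P_\xi}$, $X^{t,\xi}$ controlled in the preceding section, and obtain the time estimate iii) by the time-shift device with an $\mathcal{F}_0$-measurable copy $\xi'$ independent of $B$. The only minor deviation is the symmetry of the mixed derivatives, which the paper obtains by a direct computation from the explicit formulas using the symmetry of the mixed second-order derivatives of $\Phi$ and of the flow, whereas you invoke the Clairaut-type Lemma 3.3 applied to $V(t,\cdot,\cdot)$ --- a legitimate shortcut once your bounds i)--ii) for both mixed derivatives are established.
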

\begin{proof} As in the preceding proofs, we make our
computations for the case of dimension $d=1$. Moreover, in our proof
we concentrate on the computation for the  second order derivative
with respect to the measure $\partial_\mu^2 V(t,x,P_\xi,y,z)$ and to its
estimates; using the preceding lemma on the derivatives of first order,
the computation of the  second order derivatives $\partial_x^2V(t,x,P_\xi),$
$\partial_x(\partial_\mu V(t,x,P_\xi)(y))$, $\partial_\mu(\partial_x
V(t,x,P_\xi))(y)$ and $\partial_y(\partial_\mu V(t,x,P_\xi)(y))$ and
their estimates are  rather direct and left to the interested reader. On the other hand, a direct computation based on (\ref{4.6}) and (\ref{4.11}) and using the symmetry of the mixed second order derivatives of $\Phi$ and the processes $X^{t,x,P_{\xi}}$ and $X^{t,{\xi}}$ (see Proposition 3.1) shows that 
$$\partial_{x}(\partial_\mu V(t,x,P_\xi,y))=\partial_\mu(\partial_{x}V(t,x,P_\xi))(y),\ (t, x,y)\in [0, T]\times {\mathbb R}\times {\mathbb R},\ \xi\in L^2({\cal F}_t).$$

For the computation of $\partial_\mu^2 V(t,x,P_\xi,y,z)$ we use the
formula for  $\partial_\mu V(t,x,P_\xi,y)$ stated in
Lemma 4.1, (\ref{4.11}) and (\ref{4.6}):
\be\label{4.22} \partial_\mu V(t,x,P_\xi,y)=
V_{1}(t,x,P_\xi,y)+V_{2}(t,x,P_\xi,y)+V_{3}(t,x,P_\xi,y),\ee

\noindent with

$\displaystyle V_1(t,x,P_\xi,y)=E[(\partial_x\Phi)(X^{t,x,P_\xi}_T,P_{X^{t,\xi}_T})
\cdot(\partial_\mu X^{t,x,P_\xi}_T)_{i}(y)],$

$\displaystyle V_2(t,x,P_\xi,y)=E[\widetilde{E}[(\partial_\mu\Phi)(X^{t,x,P_\xi}_T,
P_{X^{t,\xi}_T},\widetilde{X}_T^{t,y,P_\xi})\cdot
\partial_x \widetilde{X}^{t,y,P_\xi}_T]],$

$\displaystyle V_3(t,x,P_\xi,y)=E[\widetilde{E}[(\partial_\mu\Phi)(X^{t,x,P_\xi}_T,
P_{X^{t,\xi}_T},\widetilde{X}_T^{t,\widetilde{\xi}})
\cdot(\partial_\mu \widetilde{X}^{t,\widetilde{\xi},
P_\xi}_T)(y)]].$

\smallskip

\noindent Let us consider $V_3(t,x,P_\xi,y)$, the discussion for
$V_1(t,x,P_\xi,y)$ and $V_2(t,x,P_\xi,y)$ is analogous. Using the Fr\'{e}chet differentiability of the terms involved in the definition of $V_3$, 
we obtain for the Fr\'{e}chet derivative of
\be\label{4.23}\xi\mapsto\widetilde{V}_3(t,x,\xi,y):=V_3(t,x,
P_\xi,y),\ (t,x,\xi)\in[0,T]\times {\mathbb R}\times L^2({\cal F}_t)\ee

\noindent that, for all $\eta\in L^2({\cal F}_t)$,
\be\label{4.24}\begin{array}{lll}
& D\widetilde{V}_3(t,x,\xi,y)(\eta)\\
&=E\big[\widetilde{E}\big[(\partial_\mu
\Phi)(X^{t,x,P_\xi}_T,P_{X^{t,\xi}_T},\widetilde{X}_T^{t,
\widetilde{\xi}})\cdot D_{\widetilde{\xi}}[(\partial_\mu
\widetilde{X}^{t,\widetilde{\xi},P_\xi}_T)(y)](\widetilde{\eta})\\
&\ \ \ +\partial_x(\partial_\mu\Phi)(X^{t,x,P_\xi}_T,
P_{X^{t,\xi}_T},\widetilde{X}_T^{t,\widetilde{\xi}})
\cdot(\partial_\mu \widetilde{X}^{t,\widetilde{\xi},
P_\xi}_T)(y)\cdot D_\xi[X^{t,x,P_\xi}_T](\eta)\\
&\ \ \ +\overline{E}[\partial_\mu
(\partial_\mu\Phi)(X^{t,x,P_\xi}_T,P_{X^{t,\xi}_T},
\widetilde{X}_T^{t,\widetilde{\xi}},\overline{X}^{t,
\overline{\xi}}_T)
\cdot D_{\overline{\xi}}[\overline{X}^{t,
\overline{\xi}}_T](\overline{\eta})]\cdot(\partial_\mu
\widetilde{X}^{t,\widetilde{\xi},P_\xi}_T)(y)\\
&\ \ \ +\partial_y(\partial_\mu\Phi)(X^{t,x,
P_\xi}_T,P_{X^{t,\xi}_T},\widetilde{X}_T^{t,\widetilde{\xi}})
\cdot(\partial_\mu \widetilde{X}^{t,\widetilde{\xi},
P_\xi}_T)(y)\cdot D_{\widetilde{\xi}}[\widetilde{X}_T^{t,
\widetilde{\xi}}](\widetilde{\eta})\big]\big]\end{array}\ee

\noindent (recall the notations introduced in the proof of Theorem 3.1 and Proposition 3.1). On the other hand, we know already that
\be\label{4.25}\begin{array}{lll}
&{\rm i)}\ D_\xi[X^{t,x,P_\xi}_T](\eta)=
\widehat{E}[\partial_\mu X^{t,x,P_\xi}_T(\widehat{\xi})\cdot
\widetilde{\eta}],\\
&{\rm ii)}\ D_{\overline{\xi}}
[\overline{X}^{t,\overline{\xi}}_T](\overline{\eta})=
\partial_x\overline{X}_T^{t,
\overline{\xi},P_{\xi}}\cdot\overline{\eta}+
\widehat{E}[\partial_\mu \overline{X}^{t,\overline{\xi},
P_\xi}_T(\widehat{\xi})\cdot\widehat{\eta}],\\
&{\rm iii)}\ D_{\widetilde{\xi}}
[\widetilde{X}_T^{t,\widetilde{\xi}}](\widetilde{\eta})=
\partial_x\widetilde{X}_T^{t,
\widetilde{\xi},P_{\xi}}\cdot\widetilde{\eta}+
\widehat{E}[\partial_\mu
\widetilde{X}_T^{t,\widetilde{\xi},P_\xi}(\widehat{\xi})
\cdot\widehat{\eta}],\\
&{\rm iv)}\ D_{\widetilde{\xi}}[(\partial_\mu
\widetilde{X}^{t,\widetilde{\xi},P_\xi}_T)(y)]
(\widetilde{\eta})=
\partial_x(\partial_\mu \widetilde{X}^{t,
\widetilde{\xi},P_\xi}_T(y))\cdot\widetilde{\eta}+
\widehat{E}[(\partial^2_\mu
\widetilde{X}^{t,\widetilde{\xi},P_\xi}_T)(y,
\widehat{\xi})\cdot\widehat{\eta}].\end{array}\ee

\smallskip

\noindent Consequently, we have
\be\label{4.26}\begin{array}{lll}
& D\widetilde{V}_3(t,x,\xi,y)(\eta)\\
&=\widehat{E}\big[E\big[\widetilde{E}
\big[(\partial_\mu\Phi)(X^{t,x,P_\xi}_T,P_{X^{t,
\xi}_T},\widetilde{X}_T^{t,\widetilde{\xi}})\cdot
\big(\partial_x(\partial_\mu \widetilde{X}^{t,
\widehat{\xi},P_\xi}_T(y))+
(\partial^2_\mu\widetilde{X}^{t,\widetilde{\xi},
P_\xi}_T)(y,\widehat{\xi})\big)\\
&\ \  +\partial_x(\partial_\mu\Phi)
(X^{t,x,P_\xi}_T,P_{X^{t,\xi}_T},\widetilde{X}_T^{t,
\widetilde{\xi}})\cdot(\partial_\mu \widetilde{X}^{t,
\widetilde{\xi},P_\xi}_T)(y)\cdot\partial_\mu
X^{t,x,P_\xi}_T(\widehat{\xi})\\
&\ \  +\overline{E}[\partial_\mu
(\partial_\mu\Phi)(X^{t,x,P_\xi}_T,P_{X^{t,\xi}_T},
\widetilde{X}_T^{t,\widetilde{\xi}},\overline{X}^{t,
\overline{\xi}}_T)
\cdot(\partial_\mu\widetilde{X}^{t,\widetilde{\xi},
P_\xi}_T)(y)\cdot \big(\partial_x\overline{X}_T^{t,
\widehat{\xi},P_{\xi}}+\partial_\mu
\overline{X}^{t,\overline{\xi},P_\xi}_T
(\widehat{\xi}) \big)\\
&\ \  +\partial_y(\partial_\mu\Phi)(X^{t,x,
P_\xi}_T,P_{X^{t,\xi}_T},\widetilde{X}_T^{t,\widetilde{\xi}})
\cdot(\partial_\mu \widetilde{X}^{t,\widetilde{\xi},
P_\xi}_T)(y)\cdot\big(\partial_x\widetilde{X}_T^{t,
\widehat{\xi},P_{\xi}}+\partial_\mu
\widetilde{X}_T^{t,\widetilde{\xi},P_\xi}
(\widehat{\xi}))\big]\big]\cdot\widehat{\eta}\big].\end{array}\ee

\noindent Therefore,
\be\label{4.27}\begin{array}{lll}
&\partial_\mu{V}_3(t,x,P_\xi,y,z)\\
&=E\big[\widetilde{E}
\big[(\partial_\mu\Phi)(X^{t,x,P_\xi}_T,P_{X^{t,
\xi}_T},\widetilde{X}_T^{t,\widetilde{\xi}})\cdot
\big(\partial_x(\partial_\mu \widetilde{X}^{t,
z,P_\xi}_T(y))+
(\partial^2_\mu\widetilde{X}^{t,\widetilde{\xi},
P_\xi}_T)(y,z)\big)\\
&\ \ +\partial_x(\partial_\mu\Phi)
(X^{t,x,P_\xi}_T,P_{X^{t,\xi}_T},\widetilde{X}_T^{t,
\widetilde{\xi}})\cdot\partial_\mu \widetilde{X}^{t,
\widetilde{\xi},P_\xi}_T(y)\cdot\partial_\mu
X^{t,x,P_\xi}_T(z)\\
&\ \ +\overline{E}[\partial_\mu
(\partial_\mu\Phi)(X^{t,x,P_\xi}_T,P_{X^{t,\xi}_T},
\widetilde{X}_T^{t,\widetilde{\xi}},\overline{X}^{t,
\overline{\xi}}_T)
\cdot\partial_\mu\widetilde{X}^{t,\widetilde{\xi},
P_\xi}_T(y)\cdot \big(\partial_x\overline{X}_T^{t,
z,P_{\xi}}+\partial_\mu
\overline{X}^{t,\overline{\xi},P_\xi}_T
(z) \big)\\
&\ \ +\partial_y(\partial_\mu\Phi)(X^{t,x,
P_\xi}_T,P_{X^{t,\xi}_T},\widetilde{X}_T^{t,\widetilde{\xi}})
\cdot(\partial_\mu \widetilde{X}^{t,\widetilde{\xi},
P_\xi}_T)(y)\cdot\big(\partial_x\widetilde{X}_T^{t,
z,P_{\xi}}+\partial_\mu
\widetilde{X}_T^{t,\widetilde{\xi},P_\xi}
(z))\big]\big],\end{array}\ee

\noindent $t\in[0,T],\ x,\ y,\ z\in {\mathbb R},\ \xi\in L^2({\cal F}_t)$,
satisfies the relation
\be\label{4.28} D\widetilde{V}_3(t,x,\xi,y)
(\eta)=\widehat{E}\left[\partial_\mu{V}_3(t,x,P_\xi,y,
\widehat{\xi})\cdot\widehat{\eta}\right],\ee

\noindent i.e., the function $\partial_\mu{V}_3(t,x,
P_\xi,y,z)$ is the derivative of $ V_3(t,x,P_\xi,y)$
with respect to the measure at $P_\xi$. Moreover, the above
expression for  $\partial_\mu{V}_3(t,x,
P_\xi,y,z)$ combined with the estimates for the process
$X^{t,x,P_\xi}$ and those of its first and second order
derivatives studied in the preceding sections allows to
obtain after a direct computation
\be\label{4.29}|\partial_\mu{V}_3(t,x,
P_\xi,y,z)-\partial_\mu{V}_3(t,x',
P_\xi',y',z')|\le C(|x-x'|+|y-y'|+|z-z'|+W_2(P_\xi,
P_{\xi'})),\ee

\noindent for all $t\in[0,T],\ x,\ x',\ y,\ y',\ z,\ z'\in {\mathbb R}$ and
$\xi,\ \xi'\in L^2({\cal F}_t).$ Furthermore, extending
in a direct way the corresponding argument for the estimate
of the difference for the first order derivatives of
$V(t,x,P_\xi)$ at different time points (see (\ref{4.18})), we deduce from the
explicit expression for $\partial_\mu{V}_3(t,x,P_\xi,y,z)$
that, for some real $C\in {\mathbb R},$
\be\label{4.30}|\partial_\mu{V}_3(t,x,P_\xi,y,z)-
\partial_\mu{V}_3(t',x,P_\xi,y,z)|\le C|t-t'|^{1/2},\ee

\noindent  for all $t,\ t'\in[0,T],\ x,\ y,\ z\in {\mathbb R}$ and $\xi
\in L^2({\cal F}_t).$

In the same manner as we obtained the wished results for
$\partial_\mu V_3(t,x,P_\xi,y,z)$, we can investigate
$\partial_\mu V_1(t,x,P_\xi,y,z)$ and $\partial_\mu V_2(t,x,
P_\xi,y,z)$. This yields the wished results for $\partial_\mu^2
V(t,x,P_\xi,y,z).$ The proof is complete.\end{proof}
\section{It\^{o} formula and PDE associated with
mean-field SDE}
\bp Let $\Phi\in C_b^{2,1}({\mathbb R}^d
\times{\cal P}_2({\mathbb R}^d))$. Then, under Hypothesis (H.2), for
all $0\le t\le s\le T,\ x\in {\mathbb R}^d,\ \xi\in L^2({\cal F}_t;{\mathbb R}^d)$ the
following It\^{o} formula is satisfied:
\be\label{5.1}\begin{array}{lll}
& \Phi(X^{t,x,P_\xi}_s,P_{X^{t,\xi}_s})-
\Phi(x,P_\xi)\\
& =\displaystyle\int_t^s\bigg(\sum_{i=1}^d\partial_{x_i}
\Phi(X^{t,x,P_\xi}_r,P_{X^{t,\xi}_r})b_{i}
(X^{t,x,P_\xi}_r,P_{X^{t,\xi}_r})\\
&\ \ \ \displaystyle+\frac12\sum_{i,j,k=1}^d
\partial_{x_i x_j}^2\Phi(X^{t,x,P_\xi}_r,P_{X^{t,\xi}_r})(\sigma_{i,k}
\sigma_{j,k})(X^{t,x,P_\xi}_r,P_{X^{t,\xi}_r})\\
&\ \ \ \displaystyle+\widetilde{E}\big[\sum_{i=1}^d(\partial_\mu \Phi)_i
(X^{t,x,P_\xi}_r,P_{X^{t,\xi}_r},\widetilde{X}^{t,
\widetilde{\xi}}_r)b_{i}(\widetilde{X}^{t,\widetilde{\xi}}_r,P_{X^{t,\xi}_r})\\
&\ \ \ \displaystyle+\frac12\sum_{i,j,k=1}^d\partial_{y_i}\left((\partial_\mu \Phi)_j
(X^{t,x,P_\xi}_r,P_{X^{t,\xi}_r},\widetilde{X}^{t,\widetilde{\xi}}_r)
\right)(\sigma_{i,k}\sigma_{j,k})(\widetilde{X}^{t,\widetilde{\xi}}_s,
P_{X^{t,\xi}_s})\big]\bigg)dr\\
&\ \ \ \displaystyle+\int_t^s\sum_{i,j=1}^d\partial_{x_i}\Phi(X^{t,x,
P_\xi}_r,P_{X^{t,\xi}_r})\sigma_{i,j}(X^{t,x,P_\xi}_r,
P_{X^{t,\xi}_r})dB^j_r,\  s\in[t,T].\end{array}\ee\ep

\begin{proof} As already before in other proofs
let us again restrict ourselves to dimension $d=1$. The
general case is gotten by a straight-forward extension.

Let $0\le t<s\le T,\ u\in {\mathbb R},\ \xi\in L^2(
{\cal F}_t)$ and put $t_i^n:=t+i(s-t)2^{-n},\, 0\le i\le 2^n,\
n\ge 1.$ Then, since $\Phi(u,.)\in C^{2,1}
({\cal P}_2({\mathbb R}))$, we have due to Lemma 2.1
\be\label{5.2}\begin{array}{lll}
&\Phi(u,P_{X^{t,\xi}_s})-\Phi(u,P_\xi)=\displaystyle
\sum_{i=0}^{2^n-1}\left(\Phi(u,P_{X^{t,\xi}_{t_{i+1}^n}})-\Phi(u,
P_{X^{t,\xi}_{t_{i}^n}})\right)\\
&\displaystyle=\sum_{i=0}^{2^n-1}\bigg(\widetilde{E}[\partial_\mu
\Phi(u,P_{X^{t,\xi}_{t_{i}^n}},
\widetilde{X}^{t,\widetilde{\xi}}_{t_{i}^n})(\widetilde{X}^{t,
\widetilde{\xi}}_{t_{i+1}^n}-\widetilde{X}^{t,
\widetilde{\xi}}_{t_{i}^n})]\\
&\displaystyle\ \ \ +\frac12\widetilde{E}[\overline{E}[\partial^2_\mu\Phi(u,P_{X^{t,\xi}_{t_{i}^n}},\widetilde{X}^{t,
\widetilde{\xi}}_{t_{i}^n},\overline{X}^{t,\overline{\xi}}_{t_{i}^n})
(\widetilde{X}^{t,\widetilde{\xi}}_{t_{i+1}^n}-\widetilde{X}^{t,
\widetilde{\xi}}_{t_{i}^n})(\overline{X}^{t,
\overline{\xi}}_{t_{i+1}^n}-\overline{X}^{t,
\overline{\xi}}_{t_{i}^n})]]\\
&\displaystyle\ \ \ +\frac12\widetilde{E}
[\partial_y\partial_\mu\Phi(u,P_{X^{t,\xi}_{t_{i}^n}},\widetilde{X}^{t,
\widetilde{\xi}}_{t_{i}^n})
(\widetilde{X}^{t,\widetilde{\xi}}_{t_{i+1}^n}-\widetilde{X}^{t,
\widetilde{\xi}}_{t_{i}^n})^2]+R_{t_i^n}(P_{X^{t,\xi}_{t_{i+1}^n}},
P_{X^{t,\xi}_{t_{i}^n}})\bigg)\end{array}\ee

\smallskip

\noindent (For the notations we refer to the preceding sections),
where, for some $C\in {\mathbb R}_+$ depending only on the Lipschitz constants of $\partial_\mu^2\Phi$ and $\partial_y\partial_\mu\Phi$,
\be\label{5.3}\begin{array}{lll}
& |R_{t_i^n}(P_{X^{t,\xi}_{t_{i+1}^n}},
P_{X^{t,\xi}_{t_{i}^n}})|\le CE[|X^{t,\xi}_{t_{i+1}^n}
-X^{t,\xi}_{t_{i}^n}|^3]\\
& \le C\left(E[(\int_{t_i^n}^{t_{i+1}^n}|b(X^{t,x,P_\xi}_r,P_{X^{t,\xi}_r})|dr)^3]+E[(\int_{t_i^n}^{t_{i+1}^n}
|\sigma(X^{t,x,P_\xi}_r,P_{X^{t,\xi}_r})|^2dr)^{3/2}]\right)\\
&\le C(t_{i+1}^n-t_i^n)^{3/2},\ 0\le i\le 2^n-1.\end{array}\ee

\smallskip

\noindent Thus, taking into account the relations

\smallskip

i)\, \ $\displaystyle\widetilde{E}[\partial_\mu
\Phi(u,P_{X^{t,\xi}_{t_{i}^n}},
\widetilde{X}^{t,\widetilde{\xi}}_{t_{i}^n})(\widetilde{X}^{t,
\widetilde{\xi}}_{t_{i+1}^n}-\widetilde{X}^{t,
\widetilde{\xi}}_{t_{i}^n})]$

\quad \, \  $\displaystyle =\widetilde{E}[\partial_\mu
\Phi(u,P_{X^{t,\xi}_{t_{i}^n}},\widetilde{X}^{t,
\widetilde{\xi}}_{t_{i}^n})\cdot (\int_{t_i^n}^{t_{i+1}^n}
\sigma(\widetilde{X}^{t,\widetilde{\xi}}_r,P_{X^{t,
\xi}_r})d\widetilde{B}_r+
\int_{t_i^n}^{t_{i+1}^n}
b(\widetilde{X}^{t,\widetilde{\xi}}_r,P_{X^{t,
\xi}_r})dr)]$

\quad \, \  $\displaystyle =\widetilde{E}[\partial_\mu
\Phi(u,P_{X^{t,\xi}_{t_{i}^n}},\widetilde{X}^{t,
\widetilde{\xi}}_{t_{i}^n})\cdot
\int_{t_i^n}^{t_{i+1}^n}
b(\widetilde{X}^{t,\widetilde{\xi}}_r,P_{X^{t,
\xi}_r})dr],$

ii)\, $\displaystyle\widetilde{E} [\overline{E}[\partial^2_\mu\Phi(u,P_{X^{t,\xi}_{t_{i}^n}},\widetilde{X}^{t,
\widetilde{\xi}}_{t_{i}^n},\overline{X}^{t,\overline{\xi}}_{t_{i}^n})
(\widetilde{X}^{t,\widetilde{\xi}}_{t_{i+1}^n}-\widetilde{X}^{t,
\widetilde{\xi}}_{t_{i}^n})(\overline{X}^{t,
\overline{\xi}}_{t_{i+1}^n}-\overline{X}^{t,
\overline{\xi}}_{t_{i}^n}) ] ]$

\quad \, $\displaystyle =\widetilde{E} [\overline{E}[\partial^2_\mu\Phi(u,P_{X^{t,\xi}_{t_{i}^n}},\widetilde{X}^{t,
\widetilde{\xi}}_{t_{i}^n},\overline{X}^{t,\overline{\xi}}_{t_{i}^n})
\cdot (\int_{t_i^n}^{t_{i+1}^n}\sigma(\widetilde{X}^{t,\widetilde{\xi}}_r,
P_{X^{t,\xi}_r})d\widetilde{B}_r
+\int_{t_i^n}^{t_{i+1}^n}b(\widetilde{X}^{t,\widetilde{\xi}}_r,P_{X^{t,
\xi}_r})dr)$

\qquad $\displaystyle \times(\int_{t_i^n}^{t_{i+1}^n}
\sigma(\overline{X}^{t,\overline{\xi}}_r,P_{X^{t,\xi}_r})
d\overline{B}_r+\int_{t_i^n}^{t_{i+1}^n}
b(\overline{X}^{t,\overline{\xi}}_r,P_{X^{t,
\xi}_r})dr) ] ]$

 \quad \, $\displaystyle =\widetilde{E} [\overline{E}[\partial^2_\mu\Phi(u,P_{X^{t,\xi}_{t_{i}^n}},\widetilde{X}^{t,
\widetilde{\xi}}_{t_{i}^n},\overline{X}^{t,\overline{\xi}}_{t_{i}^n})
\cdot (\int_{t_i^n}^{t_{i+1}^n}b(\widetilde{X}^{t,\widetilde{\xi}}_r,
P_{X^{t,{\xi}}_r})dr)(\int_{t_i^n}^{t_{i+1}^n}b(\overline{X}^{t,
\overline{\xi}}_r,P_{X^{t,{\xi}}_r})dr) ] ],$

iii) $\displaystyle\widetilde{E} [
\partial_y\partial_\mu\Phi(u,P_{X^{t,\xi}_{t_{i}^n}},\widetilde{X}^{t,
\widetilde{\xi}}_{t_{i}^n})(\widetilde{X}^{t,
\widetilde{\xi}}_{t_{i+1}^n}-\widetilde{X}^{t,
\widetilde{\xi}}_{t_{i}^n})^2 ]$

\quad $\displaystyle =\widetilde{E} [
\partial_y\partial_\mu\Phi(u,P_{X^{t,\xi}_{t_{i}^n}},\widetilde{X}^{t,
\widetilde{\xi}}_{t_{i}^n}) (\int_{t_i^n}^{t_{i+1}^n}
\sigma(\widetilde{X}^{t,\widetilde{\xi}}_r,P_{X^{t,{\xi}}_r})
d\widetilde{B}_r+\int_{t_i^n}^{t_{i+1}^n}
b(\widetilde{X}^{t,\widetilde{\xi}}_r,P_{X^{t,
{\xi}}_r})dr)^2 ]$

\quad $\displaystyle =\widetilde{E} [
\partial_y\partial_\mu\Phi(u,P_{X^{t,\xi}_{t_{i}^n}},\widetilde{X}^{t,
\widetilde{\xi}}_{t_{i}^n})\cdot\int_{t_i^n}^{t_{i+1}^n}
|\sigma(\widetilde{X}^{t,\widetilde{\xi}}_r,P_{X^{t,
{\xi}}_r})|^2dr ]+Q_{t_i^n}$,

\noindent with $|Q_{t_i^n}|\le C(t_{i+1}^n-t_i^n)^{3/2},$ $0\le i\le 2^n-1,$\ as well as the continuity of
$r\mapsto (X^{t,x,P_\xi}_r,P_{X^{t,\xi}_r})\in L^2({\cal F};{\mathbb R}
\times{\cal P}_2({\mathbb R})),$ we get from the above sum over the second order
Taylor expansions, as $n\rightarrow +\infty$,
\be\label{5.4}\begin{array}{lll}
&\Phi(u,P_{X^{t,\xi}_s})-\Phi(u,P_\xi)
=\displaystyle\int_t^s\widetilde{E}\left[b(\widetilde{X}^{t,\widetilde{\xi}}_r,
P_{X^{t,{\xi}}_r})\partial_\mu\Phi(u,P_{X^{t,\xi}_r},
\widetilde{X}^{t,\widetilde{\xi}}_r)\right]dr\\
&\ \ \ \displaystyle+\frac12\int_t^s\widetilde{E}\left[\sigma^2(
\widetilde{X}^{t,\widetilde{\xi}}_r,P_{X^{t,{\xi}}_r})(\partial_y
\partial_\mu\Phi)(u,P_{X^{t,\xi}_r},
\widetilde{X}^{t,\widetilde{\xi}}_r)\right]dr,\  s\in[t,T].\end{array}\ee

\noindent From this latter relation we see that, for fixed $(t,\xi)$,
the function $\Psi(s,u):=\Phi(u,P_{X^{t,\xi}_s}),\, (s,u)\in[t,T]\times
{\mathbb R},$ is continuously differentiable in $s$ and twice continuously
differentiable in $u$, and all the corresponding derivatives
are bounded. In particular,
\be\label{5.5} \partial_s\Psi(s,u)=\widetilde{E}
\left[b(\widetilde{X}^{t,\widetilde{\xi}}_s,
P_{X^{t,{\xi}}_s})\partial_\mu\Phi(u,P_{X^{t,\xi}_s},
\widetilde{X}^{t,\widetilde{\xi}}_s)+\frac12\sigma^2(\widetilde{X}^{t,
\widetilde{\xi}}_s,P_{X^{t,\xi}_s})(\partial_y\partial_\mu\Phi)(u,P_{X^{t,\xi}_s},
\widetilde{X}^{t,\widetilde{\xi}}_s)\right],\ee

\smallskip

\noindent $(s,u)\in[t,T]\times {\mathbb R}.$ Consequently, we can apply to
$\Psi(s,X^{t,x,P_\xi}_s)
\ (=\Phi(X^{t,x,P_\xi}_s,P_{X^{t,\xi}_s}))$ the classical It\^{o}
formula. This yields
\be\label{5.6}\begin{array}{lll}
&\Phi(X^{t,x,P_\xi}_s,P_{X^{t,\xi}_s})-\Phi(x,P_\xi)=\displaystyle
\Psi(s,X^{t,x,P_\xi}_s)-\Psi(t,x)\\
&\displaystyle =\int_t^s\left(\partial_r\Psi(r,X^{t,x,P_\xi}_r)
+b(X^{t,x,P_\xi}_r,P_{X^{t,\xi}_r})\partial_x\Psi(r,X^{t,x,P_\xi}_r)
+\frac12\sigma^2(X^{t,x,P_\xi}_r,P_{X^{t,\xi}_r})\partial_x^2
\Psi(r,X^{t,x,P_\xi}_r)\right)dr\\
&\displaystyle\ \ \ \ \ +\int_t^s\sigma(X^{t,x,P_\xi}_r,
P_{X^{t,\xi}_r})\partial_x\Psi(r,X^{t,x,P_\xi}_r)dB_r\\
&\displaystyle =\int_t^s\bigg(\widetilde{E}\left[b
(\widetilde{X}^{t,\widetilde{\xi}}_r,
P_{X^{t,{\xi}}_r})\partial_\mu\Phi(u,P_{X^{t,\xi}_r},
\widetilde{X}^{t,\widetilde{\xi}}_r)+\frac12\sigma^2(\widetilde{X}^{t,
\widetilde{\xi}}_s,P_{X^{t,\xi}_s})(\partial_y\partial_\mu\Phi)
(X^{t,x,P_\xi}_r,
P_{X^{t,\xi}_s},\widetilde{X}^{t,\widetilde{\xi}}_s)\right]\\
&\displaystyle\ \ \ \ \  +b(X^{t,x,P_\xi}_r,P_{X^{t,\xi}_r})
\partial_x\Phi(X^{t,x,P_\xi}_r,P_{X^{t,\xi}_r})+\frac12\sigma^2(X^{t,x,
P_\xi}_r,P_{X^{t,\xi}_r})\partial_x^2\Phi(X^{t,x,P_\xi}_r,
P_{X^{t,\xi}_r})\bigg)dr\\
&\displaystyle\ \ \ \ \  +\int_t^s\sigma(X^{t,x,P_\xi}_r,
P_{X^{t,\xi}_r})\partial_x\Phi(X^{t,x,P_\xi}_r,P_{X^{t,\xi}_r})dB_r,\  s\in[t,T].\end{array}\ee

\noindent The proof is complete.\end{proof}

The preceding proposition can be extended without difficulties in a
straight-forward computation to the following case:
\bt Let $F:[0,T]\times {\mathbb R}^d\times{\cal P}({\mathbb R}^d)
\rightarrow {\mathbb R}$ be such that $F(t,.,.)\in C_b^{2,1}({\mathbb R}^d\times{\cal P}({\mathbb R}^d))$, for
all $t\in [0,T],$ $F(.,x,\mu)\in C^1([0,T]),$ for all $(x,\mu)\in {\mathbb R}^d
\times {\cal P}_2({\mathbb R}^d)$, and all derivatives, with respect to $t$ of first order, and
with respect to $(x,\mu)$ of first and of second order, are uniformly bounded over
$[0,T]\times {\mathbb R}^d\times{\cal P}({\mathbb R}^d)$ (for short: $F\in C_b^{1,(2,1)}([0,T]\times
{\mathbb R}^d\times{\cal P}_2({\mathbb R}^d))$). Then, under Hypothesis (H.2), for
all $0\le t\le s\le T,\ x\in {\mathbb R}^d,\ \xi\in L^2({\cal F}_t;{\mathbb R}^d)$, the
following It\^{o} formula is satisfied:
\be\label{5.7}\begin{array}{lll}
&F(s,X^{t,x,P_\xi}_s,P_{X^{t,\xi}_s})-F(t,x,P_\xi)\\
&=\displaystyle\int_t^s\bigg(\partial_r
F(r,X^{t,x,P_\xi}_r,P_{X^{t,\xi}_r})+\sum_{i=1}^d\partial_{x_i}
F(r,X^{t,x,P_\xi}_r,P_{X^{t,\xi}_r})b_{i}
(X^{t,x,P_\xi}_r,P_{X^{t,\xi}_r})\\
&\displaystyle\ \ \ +\frac12\sum_{i,j,k=1}^d
\partial_{x_i x_j}^2F(r,X^{t,x,P_\xi}_r,P_{X^{t,\xi}_r})(\sigma_{i,k}
\sigma_{j,k})(X^{t,x,P_\xi}_r,P_{X^{t,\xi}_r})\\
&\displaystyle\ \ \ +\widetilde{E}\big[\sum_{i=1}^d(\partial_\mu F)_i
(r,X^{t,x,P_\xi}_r,P_{X^{t,\xi}_r},\widetilde{X}^{t,
\widetilde{\xi}}_r)b_{i}(\widetilde{X}^{t,\widetilde{\xi}}_r,P_{X^{t,\xi}_r})\\
&\displaystyle\ \ \ +\frac12\sum_{i,j,k=1}^d\partial_{y_i}\left((\partial_\mu F)_j
(r,X^{t,x,P_\xi}_r,P_{X^{t,\xi}_r},\widetilde{X}^{t,\widetilde{\xi}}_r)
\right)(\sigma_{i,k}\sigma_{j,k})(\widetilde{X}^{t,\widetilde{\xi}}_r,
P_{X^{t,\xi}_r})\big]\bigg)dr\\
&\displaystyle\ \ \  +\int_t^s\sum_{i,j=1}^d\partial_{x_i}F(r,X^{t,x,
P_\xi}_r,P_{X^{t,\xi}_r})\sigma_{i,j}(X^{t,x,P_\xi}_r,
P_{X^{t,\xi}_r})dB^j_r,\  s\in[t,T].\end{array}\ee\et

 The above It\^{o} formula applied to
$\Phi(X^{t,x,P_\xi}_s,P_{X^{t,\xi}_s})$ allows now to show that
our value function $V(t,x,P_\xi)$ is continuously differentiable with
respect to $t$, with a derivative $\partial_tV$ bounded over $[0,T]
\times {\mathbb R}^d\times{\cal P}_2({\mathbb R}^d).$

\bl Assume that $\Phi\in C^{2,1}({\mathbb R}^d\times
{\cal P}_2({\mathbb R}^d))$. Then, under Hypothesis (H.2), $V\in C^{1,(2,1)}([0,T]\times
{\mathbb R}^d\times{\cal P}_2({\mathbb R}^d))$ and its derivative $\partial_tV(t,x,P_\xi)$ with
respect to $t$ verifies, for some constant $C\in {\mathbb R}$,
\be\label{5.8}\begin{array}{lll}
&{\rm i)}\ |\partial_tV(t,x,P_\xi)|\le C,\\
&{\rm ii)}\ |\partial_tV(t,x,P_\xi)-\partial_tV(t,x',P_{\xi'})|\le C(|x-x'|+W_2(P_\xi,P_{\xi'})),\\
&{\rm iii)}\ |\partial_tV(t,x,P_\xi)-\partial_tV(t',x,P_\xi)|\le C|t-t'|^{1/2},\\ \end{array}\ee

\noindent for all $t,\ t'\in[0,T],\ x,\ x'\in {\mathbb R}^d,\ \xi,\ \xi'\in L^2({\cal F}_t).$
\el
\begin{proof} Recall that, for $t\in[0,T],\ x\in {\mathbb R}^d,$ and $\xi$
(which can be supposed without loss of generality to belong to $L^2({\cal F}_0)$;
see our previous discussion in the proof of Lemma 4.1), we have
\be\label{5.9}V(t,x,P_\xi)=E[\Phi(X^{t,x,P_\xi}_T,P_{X^{t,\xi}_T})]
=E[\Phi(X^{0,x,P_\xi}_{T-t},P_{X^{0,\xi}_{T-t}})].\ee

\noindent Hence, taking the expectation over the It\^{o} formula in the last
but one  proposition for $s=T-t$ and initial time $0$, we get
\be\label{5.10}\begin{array}{lll}
&V(t,x,P_\xi)-V(T,x,P_\xi)=\displaystyle\int_0^{T-t}E\bigg[\bigg(\sum_{i=1}^d\partial_{x_i}
\Phi(X^{0,x,P_\xi}_r,P_{X^{0,\xi}_r})b_{i}
(X^{0,x,P_\xi}_r,P_{X^{0,\xi}_r})\\
&\ \ \ \displaystyle+\frac12\sum_{i,j,k=1}^d
\partial_{x_i x_j}^2\Phi(X^{0,x,P_\xi}_r,P_{X^{0,\xi}_r})(\sigma_{i,k}
\sigma_{j,k})(X^{0,x,P_\xi}_r,P_{X^{0,\xi}_r})\\
&\ \ \ \displaystyle+\widetilde{E}\big[\sum_{i=1}^d(\partial_\mu \Phi)_i
(X^{0,x,P_\xi}_r,P_{X^{0,\xi}_s},\widetilde{X}^{0,
\widetilde{\xi}}_r)b_{i}(\widetilde{X}^{0,\widetilde{\xi}}_r,P_{X^{0,\xi}_r})\\
&\ \ \ \displaystyle+\frac12\sum_{i,j,k=1}^d\partial_{y_i}\left((\partial_\mu \Phi)_j
(X^{0,x,P_\xi}_r,P_{X^{0,\xi}_r},\widetilde{X}^{0,\widetilde{\xi}}_r)
\right)(\sigma_{i,k}\sigma_{j,k})(\widetilde{X}^{0,\widetilde{\xi}}_r,
P_{X^{0,\xi}_r})\big]\bigg)\bigg]dr.\\ \end{array}\ee

\noindent Then it is evident that $V(t,x,P_\xi)$ is continuously
differentiable with respect to $t$,
\be\label{5.11}\begin{array}{lll}
&\partial_t V(t,x,P_\xi)=\displaystyle-E\bigg[\bigg(\sum_{i=1}^d\partial_{x_i}
\Phi(X^{0,x,P_\xi}_{T-t},P_{X^{0,\xi}_{T-t}})b_{i}
(X^{0,x,P_\xi}_{T-t},P_{X^{0,\xi}_{T-t}})\\
&\ \ \ \displaystyle+\frac12\sum_{i,j,k=1}^d
\partial_{x_i x_j}^2\Phi(X^{0,x,P_\xi}_{T-t},P_{X^{0,\xi}_{T-t}})(\sigma_{i,k}
\sigma_{j,k})(X^{0,x,P_\xi}_{T-t},P_{X^{0,\xi}_{T-t}})\\
&\ \ \ \displaystyle+\widetilde{E}\big[\sum_{i=1}^d(\partial_\mu \Phi)_i
(X^{0,x,P_\xi}_{T-t},P_{X^{0,\xi}_{T-t}},\widetilde{X}^{0,
\widetilde{\xi}}_{T-t})b_{i}(\widetilde{X}^{0,\widetilde{\xi}}_{T-t},P_{X^{0,\xi}_{T-t}})\\
&\ \ \ \displaystyle+\frac12\sum_{i,j,k=1}^d\partial_{y_i}\left((\partial_\mu \Phi)_j
(X^{0,x,P_\xi}_{T-t},P_{X^{0,\xi}_{T-t}},\widetilde{X}^{0,\widetilde{\xi}}_{T-t})
\right)(\sigma_{i,k}\sigma_{j,k})(\widetilde{X}^{0,\widetilde{\xi}}_{T-t},P_{X^{0,
\xi}_{T-t}})\big]\bigg)\bigg].\end{array}\ee

\noindent Moreover, using this latter formula, we can now prove in analogy
to the other derivatives of $V$ that $\partial_t V$ satisfies the estimates
stated in this lemma. The proof is complete.\end{proof}

Now we are able to establish and to prove our main result.
\bt We suppose that $\Phi\in C^{2,1}({\mathbb R}^d\times
{\cal P}_2({\mathbb R}^d))$. Then, under Hypothesis (H.2), the function $V(t,x,P_\xi)
=E[\Phi(X^{t,x,P_\xi}_T,P_{X^{t,\xi}_T})],\, (t,x,\xi)\in[0,T]\times {\mathbb R}^d
\times L^2({\cal F}_t)$, is the unique solution in $C^{1,(2,1)}([0,T]\times
{\mathbb R}^d\times {\cal P}_2({\mathbb R}^d))$ of the PDE
\be\label{5.12}\begin{array}{lll}
&\displaystyle0=\partial_tV(t,x,P_\xi)+\sum_{i=1}^d\partial_{x_i}
V(t,x,P_\xi)b_{i}(x,P_\xi)\displaystyle+\frac12\sum_{i,j,k=1}^d
\partial_{x_i x_j}^2V(t,x,P_\xi)(\sigma_{i,k}
\sigma_{j,k})(x,P_\xi)\\
&\displaystyle\ \ \ \ \ +\widetilde{E}\big[\sum_{i=1}^d(\partial_\mu V)_i
(t,x,P_\xi,\widetilde{\xi})b_{i}(\widetilde{\xi},P_\xi)\displaystyle+
\frac12\sum_{i,j,k=1}^d\partial_{y_i}(\partial_\mu V)_j
(t,x,P_\xi,\widetilde{\xi})(\sigma_{i,k}\sigma_{j,k})(\widetilde{\xi},
P_\xi)\big],\\
&\displaystyle\ \ \  \ \ \  \hfill {(t,x,\xi)\in[0,T]\times {\mathbb R}^d\times L^2({\cal F})},\\
&V(T,x,P_\xi)=\Phi(x,P_\xi),\ (x,\xi)\in {\mathbb R}^d\times L^2({\cal F}).\\\end{array}\ee\et

\begin{proof} As before we restrict ourselves in this proof
to the one-dimensional case $d=1.$ Recalling the flow property
\be\label{5.13}\left(X^{s,X^{t,x,P_\xi}_s,P_{X^{t,\xi}_s}}_r,
X^{s,X^{t,\xi}_s}_r\right)=\left(X^{t,x,P_\xi}_r,X^{t,\xi}_r\right),\ 0\le t
\le s\le r\le T,\ x\in {\mathbb R},\ \xi\in L^2({\cal F}_t),\ee

\noindent of our dynamics as well as
\be\label{5.14}V(s,y,P_\vartheta)=E[\Phi(X^{s,y,P_\vartheta}_T, P_{X^{s,\vartheta}_T})]
=E[\Phi(X^{s,y,P_\vartheta}_T, P_{X^{s,\vartheta}_T})|{\cal F}_s],\ s\in[0,T],\ y\in {\mathbb R},\ \vartheta\in L^2({\cal F}_s),\ee

\smallskip

\noindent  we deduce that
\be\label{5.15}\begin{array}{lll}
&V(s,X^{t,x,P_\xi}_s,P_{X^{t,\xi}_s})
=E[\Phi(X^{s,y,P_\vartheta}_T,P_{X^{s,\vartheta}_T})
|{\cal F}_s]_{\big|(y,\vartheta)=(X^{t,x,P_\xi}_s,X^{t,\xi}_s)}\\
&=E[\Phi(X^{s,X^{t,x,P_\xi}_s,P_{X^{t,\xi}_s}}_T,
P_{X^{s,X^{t,\xi}_s}_T})|{\cal F}_s]=E[\Phi(X^{t,x,P_\xi}_T,
P_{X^{t,\xi}_T})|{\cal F}_s],\  s\in[t,T],\end{array}\ee

\smallskip

\noindent i.e., $V(s,X^{t,x,P_\xi}_s,P_{X^{t,\xi}_s}),\,
s\in[t,T],$ is a martingale. On the other hand, since due to Lemma 5.1 the function $V\in C^{1,(2,1)}([0,T]\times
{\mathbb R}^d\times {\cal P}_2({\mathbb R}^d))$ satisfies the regularity assumptions for the It\^{o} formula,
we know that
\be\label{5.16}\begin{array}{lll}
& V(s,X^{t,x,P_\xi}_s,P_{X^{t,\xi}_s})-V(t,x,P_\xi)\\
& =\displaystyle\int_t^s\bigg(\partial_r
V(r,X^{t,x,P_\xi}_r,P_{X^{t,\xi}_r})+\partial_{x}
V(r,X^{t,x,P_\xi}_r,P_{X^{t,\xi}_r})b(X^{t,x,P_\xi}_r,P_{X^{t,\xi}_r})\\
&\displaystyle\ \ \ \ +\frac12
\partial_{x}^2V(r,X^{t,x,P_\xi}_r,P_{X^{t,\xi}_r})\sigma^2
(X^{t,x,P_\xi}_r,P_{X^{t,\xi}_r})\\
&\displaystyle\ \ \ \ +\widetilde{E}\big[\partial_\mu V
(r,X^{t,x,P_\xi}_r,P_{X^{t,\xi}_r},\widetilde{X}^{t,
\widetilde{\xi}}_r)b(\widetilde{X}^{t,\widetilde{\xi}}_r,P_{X^{t,\xi}_r})\\
&\displaystyle\ \ \ \ +\frac12\partial_{y}((\partial_\mu V)
(r,X^{t,x,P_\xi}_r,P_{X^{t,\xi}_r},\widetilde{X}^{t,\widetilde{\xi}}_r)
)\sigma^2(\widetilde{X}^{t,\widetilde{\xi}}_r,P_{X^{t,\xi}_r})\big]
\bigg)dr\\
&\displaystyle\ \ \ \  +\int_t^s\partial_{x}V(r,X^{t,x,
P_\xi}_r,P_{X^{t,\xi}_r})\sigma(X^{t,x,P_\xi}_r,
P_{X^{t,\xi}_r})dB_r,\ s\in[t,T].\end{array}\ee

\noindent Consequently,
\be\label{5.17}\begin{array}{lll} &V(s,X^{t,x,P_\xi}_s,P_{X^{t,\xi}_s})-V(t,x,P_\xi)\\
&=\displaystyle\int_t^s\partial_{x}V(r,X^{t,x,
P_\xi}_r,P_{X^{t,\xi}_r})\sigma(X^{t,x,P_\xi}_r,
P_{X^{t,\xi}_r})dB_r,\ s\in[t,T],\end{array}\ee

\noindent and
\be\label{5.18}\begin{array}{lll} &0=\displaystyle\int_t^s\bigg(\partial_r
V(r,X^{t,x,P_\xi}_r,P_{X^{t,\xi}_r})+\partial_{x}
V(r,X^{t,x,P_\xi}_r,P_{X^{t,\xi}_r})b(X^{t,x,P_\xi}_r,P_{X^{t,\xi}_r})\\
&\displaystyle\ \ \ \ \ +\frac12
\partial_{x}^2V(r,X^{t,x,P_\xi}_r,P_{X^{t,\xi}_r})\sigma^2
(X^{t,x,P_\xi}_r,P_{X^{t,\xi}_r})\\
&\displaystyle\ \ \ \ \ +\widetilde{E}\big[\partial_\mu V
(r,X^{t,x,P_\xi}_r,P_{X^{t,\xi}_r},\widetilde{X}^{t,
\widetilde{\xi}}_r)b(\widetilde{X}^{t,\widetilde{\xi}}_r,P_{X^{t,\xi}_r})\\
&\displaystyle\ \ \ \ \ +\frac12\partial_{y}\left((\partial_\mu V)
(r,X^{t,x,P_\xi}_r,P_{X^{t,\xi}_r},\widetilde{X}^{t,\widetilde{\xi}}_r)
\right)\sigma^2(\widetilde{X}^{t,\widetilde{\xi}}_r,P_{X^{t,\xi}_r})\big]
\bigg)dr,\ s\in[t,T],\end{array}\ee

\noindent from where we obtain easily the wished PDE.

Thus, it only still remains to prove the uniqueness of the solution of
the PDE in the class $C^{1,(2,1)}([0,T]\times {\mathbb R}^d\times{\cal P}_2({\mathbb R}^d))$.
Let $U\in C^{1,(2,1)}([0,T]\times {\mathbb R}^d\times{\cal P}_2({\mathbb R}^d))$ be a solution
of PDE (\ref{5.12}). Then, from the It\^{o} formula we have that
\be\label{5.19}\begin{array}{lll}
&U(s,X^{t,x,P_\xi}_s,P_{X^{t,\xi}_s})-U(t,x,P_\xi) =\displaystyle\int_t^s\partial_{x}U(r,X^{t,x,
P_\xi}_r,P_{X^{t,\xi}_r})\sigma(X^{t,x,P_\xi}_r,
P_{X^{t,\xi}_r})dB_r,\ s\in[t,T],\end{array}\ee

\noindent  is a martingale. Thus, for all $t\in[0,T],\ x\in {\mathbb R}$ and
$\xi\in L^2({\cal F}_t)$,
\be\label{5.20} U(t,x,P_\xi)=E[U(T,X^{t,x,P_\xi}_T,P_{X^{t,\xi}_T})
|{\cal F}_t]=E[\Phi(X^{t,x,P_\xi}_T,P_{X^{t,\xi}_T})]=V(t,x,P_\xi).\ee

\noindent This proves that the functions $U$ and $V$ coincide, i.e.,
the solution is unique in $C^{1,(2,1)}([0,T]\times {\mathbb R}^d\times{\cal
P}_2({\mathbb R}^d))$. The proof is complete.\end{proof}

\end{document}